\documentclass[12pt, reqno ]{amsart}
\usepackage{amsmath,amssymb,amsthm,verbatim,color}
\usepackage[english]{babel}
\usepackage{amsfonts}
\usepackage{amssymb}
\usepackage{pgfplots}
\pgfplotsset{compat=1.18}

\usepackage[letterpaper,top=2in,bottom=1.5in,left=1.5in,right=1.5in,marginparwidth=1.75in]{geometry}

\usepackage{amsmath}
\usepackage{graphicx}
\usepackage{mathtools}
\usepackage{amssymb}
\usepackage{amsthm}
\usepackage{float}
\usepackage[style=alphabetic]{biblatex}

\addbibresource{sample.bib}

\usepackage[colorlinks=true, allcolors=blue]{hyperref}

\newtheorem{theorem}{Theorem}[section]
\newtheorem{prop}[theorem]{Proposition}
\newtheorem{definition}[theorem]{Definition}

\newtheorem{lemma}[theorem]{Lemma}
\numberwithin{equation}{section}

\newtheorem{conjecture}[theorem]{Conjecture}
\newtheorem{ques}[theorem]{Question}
\newcommand{\mycomment}[1]{}

\title{Minimal Surfaces near Hardt-Simon foliations}
\author{Vishnu Nandakumaran}
\address{Department of Mathematics, University of Notre Dame, IN 46637} \email{vnandaku@nd.edu}
\begin{document}
\thanks{ The authors declare no competing interests.\\
No new data were created or analysed in this study. Data sharing is not applicable to this article.}
\maketitle
\begin{abstract}
    Caffarelli-Hardt-Simon \cite{CHS} used the minimal surface equation on the Simons cone $C(S^3\times S^3)$ to generate newer examples of minimal hypersurfaces with isolated singularities. Hardt-Simon \cite{SimonHardt} proved that every area-minimizing quadratic cone $\mathcal{C}$ having only an isolated singularity can be approximated by a unique foliation of $\mathbb R^{n+1}$ by smooth, area-minimizing hypersurfaces asymptotic to $\mathcal C$. This paper uses methods similar to \cite{CHS} to solve the minimal surface equation for the Hardt-Simon surfaces in the sphere for some boundary values. We use gluing methods to construct minimal surfaces over Hardt-Simon surfaces and near quadratic cones. 
\end{abstract}

\setcounter{tocdepth}{1}
\tableofcontents

\section{Introduction}

Let $N^n\subset \mathbb R^{n+1}$ be a minimal hypersurface, it is a critical point of the area functional. One of the key ways to study the local and global structure of these minimal surfaces is through the study of its tangent cones at 0 and infinity, and thereby, the need to study area-minimizing cones. Due to the works of Bernstein, Fleming \cite{Fleming1962OnTO}, De Giorgi \cite{DeGiorgi1965}, Almgrem \cite{almgren1966}, and Simons \cite{SimonJ}, we know that only area minimizing cones for $n\leq 6$ are flat planes. The first non-trivial area-minimizing cone was the Simons cone due to Simons \cite{SimonJ} and Bombieri-De Giorgi-Giusti \cite{Bombieri1969}. Thereafter, generalized Simons cones or quadratic cones $C^{p,q}$ (see Definition \ref{prel:defn:Quadratic cones}), which are cones on the product of spheres, are one of the most well-understood examples of area-minimizing minimal cones. A natural question is if we can perturb these minimal cones to obtain examples of hypersurfaces with singularities. This led to the study of the graphical minimal surface equation on the Simons cone. A first question one could ask is,
\begin{ques}\label{Q1}
    For minimal quadratic cones $\mathcal{C}^{p,q}\subset \mathbb R^{n+1}$ and $\Sigma=\mathcal C^{p,q}\cap \partial B_1$ be its link where $B_1$ is the closed unit ball. Given $g\in C^{2, \alpha}(\Sigma)$ with $\|g\|_{C^2, \alpha}$ small and let $\Sigma'=graph_\Sigma(g)$, then
    \begin{enumerate}
        \item \label{Q1.1} is there a minimal hypersurface $N$ graphical over $\mathcal C^{p,q}\cap B_1$ such that $\Sigma'\subset N $. 
        \item if not, can we study these surfaces and comment on their graphicality over nearby "nice" surfaces.  
    \end{enumerate} 
\end{ques}
In an attempt to find a plethora of examples of minimal surfaces with isolated singularities \cite{CHS} studies a boundary value type problem for the minimal surface equation on $\mathcal C^{3, 3}$. They note this can be answered for a finite co-dimension set. More specifically, for $g\in V^{\perp}\subset C^{2, \alpha}(\Sigma)$ there is a unique $u$ on $\mathcal C^{p,q}\cap B_1$, graph of $u$ is minimal and $u|_{\Sigma}=g$, with required decay estimates (see Theorem \ref{CHSthm}). Here $V$ is a finite-dimensional set corresponding to the first three Fourier modes on $\Sigma$. \\
Hardt-Simon \cite{SimonHardt} proved that for quadratic cones, there exists a smooth minimal hypersurface $S_+$ contained completely on one side of $\mathcal C^{p,q}$ and is unique up to scaling. For $\lambda >0$, $S_\lambda=\lambda S_+$ forms a foliation of one of the components of $\mathbb R^{n+1}\backslash \mathcal C^{p,q}$ (see \cite{SimonHardt}). Recently, there have been other results similar to Hardt-Simon addressing other minimizing cones and generic regularity (see \cite{wang2022}, \cite{chodosh2023genericregularityminimizinghypersurfaces}, \cite{https://doi.org/10.15781/z70n-ed29}).
\\ Outside a compact set, the Hardt-Simon surface $S_+$ is a graph over $\mathcal C^{p,q}\backslash B_{R_0}$, for $R_0>0$ and over $\Sigma$ it is the graph of a constant function. $S_\lambda\cap B_1$ is a clear obstruction to the first part of the question and corresponds to one of the dimensions in the set $V$ apart from translations and rotations. The uniqueness of Hardt-Simon's result motivates us to believe that the Hardt-Simon surface is the only key obstruction in studying perturbations near quadratic cones. So the main result of this paper is to study graphical solutions near the Hardt-Simon surface similar to \cite{CHS} and finally use this to study perturbation of $\mathcal C^{p,q}$.\\
In order to study minimal surfaces near $\mathcal C^{p,q}$, the paper's primary focus is to study minimal surfaces near leaves of the foliations $S_\lambda$. For a function $u$ on $S_\lambda$, let $\mathcal M_\lambda(u)$ denote the mean curvature of the graph of $u$ over $S_\lambda$. We prove the following theorem, 
\begin{theorem}[c.f. Theorem \ref{1}]\label{intro:1}
For $\lambda \ll 1$ let $S_\lambda$ denote a leaf of the foliation on $\mathcal C^{p,q}$, let $\Sigma_\lambda\coloneqq S_\lambda \cap \partial B_1$ and $g\in C^{2, \alpha}(\Sigma_\lambda)$, then for $\|g\|_{C^{2, \alpha}}$ small and independent of $\lambda$, then there exists a function $u\in C^{2, \alpha}_\delta(S_\lambda \cap B_1) $ solving the boundary value problem
    \begin{alignat*}{2}
       \mathcal{M}_\lambda u&=0&\quad &\text{on }S_{\lambda}\cap B_1,\\
                \Pi u&=\Pi g &      &\text{on }\Sigma_\lambda.
\end{alignat*} 
Here $\Pi$ is the projection away from a finite-dimensional set $V$ due to the first three Fourier modes (see Definition \ref{defn:Pi} and Theorem \ref{1}). 
\end{theorem}
The above theorem provides examples for minimal surfaces near the leaves of the foliation. For the cone $\mathcal C^{p,q}$ the finite-dimensional set corresponds to translations, rotations, and the Hardt-Simon foliations (see Theorem \ref{eigenvector}  and Section \ref{sec:HSFoliations}).  This construction, in turn, allows us to construct minimal surfaces near quadratic cones for arbitrarily small boundary perturbations. We use the above theorem to prove the following result, it is related to the main result of Edelen-Spolaor \cite{Nick}. Our result is more like a Dirichlet boundary value problem and follows from their result. 
\begin{theorem}[c.f. Theorem \ref{MainTheo}]
\label{intro:2}
Let $g\in C^{2, \alpha}(\Sigma)$ and $\|g\|_{C^{2,\alpha}}$ is small, then there exists a minimal hypersurface $N^n\subset \mathbb R^{n+1}$, and $(a, Q, \lambda)\in\mathbb R^{n+1}\times SO(n+1)\times \mathbb R$ such that $N$ is graphical over $(a+Q(S_\lambda))\cap B_1$and $graph_{\Sigma_\lambda}(g)\subset N$. 
\end{theorem}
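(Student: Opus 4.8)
The plan is to realize $N$ as a minimal graph, produced by Theorem \ref{1}, over a rigidly moved leaf of the Hardt-Simon foliation, and to spend the parameters $(a,Q,\lambda)$ on killing the finite-dimensional obstruction coming from $V$. Write $S_{a,Q,\lambda}:=a+Q(S_\lambda)$ and $\Sigma_{a,Q,\lambda}:=S_{a,Q,\lambda}\cap\partial B_1$. Since the mean curvature operator is invariant under rigid motions of $\mathbb R^{n+1}$, for $\lambda$ near $0$, $|a|$ small, and $Q$ near the identity the surface $S_{a,Q,\lambda}\cap B_1$ still satisfies the hypotheses of Theorem \ref{1} (the borderline case $\lambda=0$ being the Caffarelli-Hardt-Simon result, Theorem \ref{CHSthm}), with all constants uniform in these parameters.

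Given $g\in C^{2,\alpha}(\Sigma)$ of small norm, I would first write the boundary hypersurface $\operatorname{graph}_{\Sigma}(g)\subset\partial B_1$ as a normal graph, inside $\partial B_1$, over $\Sigma_{a,Q,\lambda}$, with graph function $\hat g_{a,Q,\lambda}\in C^{2,\alpha}(\Sigma_{a,Q,\lambda})$; one checks $\|\hat g_{a,Q,\lambda}\|_{C^{2,\alpha}}\lesssim\|g\|_{C^{2,\alpha}}+|a|+\|Q-\operatorname{Id}\|+|\lambda|$, the last term because $\Sigma_\lambda$ is a graph over $\Sigma$ whose height tends to $0$ as $\lambda\to0$ (asymptotics of $S_+$, \cite{SimonHardt}). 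After shrinking the parameter range so that this data stays within the regime of Theorem \ref{1}, let $u_{a,Q,\lambda}\in C^{2,\alpha}_\delta(S_{a,Q,\lambda}\cap B_1)$ be the minimal graph over $S_{a,Q,\lambda}\cap B_1$ with $\Pi u_{a,Q,\lambda}|_{\Sigma_{a,Q,\lambda}}=\Pi\hat g_{a,Q,\lambda}$, and set $N_{a,Q,\lambda}:=\operatorname{graph}_{S_{a,Q,\lambda}\cap B_1}(u_{a,Q,\lambda})$. This is a minimal hypersurface, graphical over $(a+Q(S_\lambda))\cap B_1$ by construction, and its boundary is the prescribed $\operatorname{graph}_\Sigma(g)$ exactly when
\[
\mathcal F(g;a,Q,\lambda):=(\operatorname{Id}-\Pi)\big(u_{a,Q,\lambda}|_{\Sigma_{a,Q,\lambda}}-\hat g_{a,Q,\lambda}\big)
\]
vanishes, where the right side is transported to a fixed model of $V$ over $\Sigma$ by nearest-point projection. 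Since $\mathcal F$ takes values in the finite-dimensional space $V$, the task reduces to solving $\mathcal F(g;\,\cdot\,)=0$ for $(a,Q,\lambda)$.

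The crux is a transversality statement: the differential of $(a,Q,\lambda)\mapsto\mathcal F(0;a,Q,\lambda)$ at the base point $(0,\operatorname{Id},0)$, where $N=\mathcal C^{p,q}\cap B_1$ and $\mathcal F=0$, is onto $V$. Differentiating $S_{a,Q,\lambda}$ in $a$ yields the normal parts of constant vector fields, i.e.\ the translational Jacobi fields; in $Q$, the rotational Jacobi fields; in $\lambda$, the Hardt-Simon field $\partial_\lambda S_\lambda$. By Theorem \ref{eigenvector} and Section \ref{sec:HSFoliations} these are exactly the Jacobi fields whose boundary traces, projected onto $V$, span $V$ (the first three Fourier modes); and since $\mathcal F$ detects precisely this $V$-component of the deformation trace, $D_{(a,Q,\lambda)}\mathcal F$ is surjective onto $V$, with kernel the Lie algebra of the symmetry group of $S_\lambda$ (directions along which the surface does not move). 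Restricting to a complement of this kernel, the differential is an isomorphism onto $V$, so a quantitative implicit function theorem -- legitimate because the estimates in Theorem \ref{1} are uniform for $\lambda$ near $0$ -- produces $(a(g),Q(g),\lambda(g))$, continuous in $g$ with $(a(g),Q(g),\lambda(g))\to(0,\operatorname{Id},0)$ as $g\to0$, solving $\mathcal F(g;a(g),Q(g),\lambda(g))=0$. Taking $N:=N_{a(g),Q(g),\lambda(g)}$ completes the argument; by the uniqueness in Theorem \ref{1}, $N$ agrees with the area-minimizing solution of the Plateau problem for this boundary, which is the point of contact with the structure theorem of Edelen-Spolaor \cite{Nick}.

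The step I expect to be the main obstacle is establishing this transversality quantitatively and uniformly in $\lambda$. One must identify $\operatorname{span}\{\partial_aS_{a,Q,\lambda},\,\partial_QS_{a,Q,\lambda},\,\partial_\lambda S_\lambda\}$ with $V$ through the explicit eigenfunction analysis of the Jacobi operator on $\mathcal C^{p,q}$ (Theorem \ref{eigenvector}), and simultaneously control the degeneration $S_\lambda\to\mathcal C^{p,q}$ as $\lambda\to0$ in the weighted norm $C^{2,\alpha}_\delta$, so that the constants in the implicit function theorem -- hence the size of the admissible neighborhood of $g=0$ -- remain bounded below; handling the base value $\lambda=0$ (where the ``surface'' is the singular cone) may in addition call for a rescaling or continuity argument.
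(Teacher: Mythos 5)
The overall framework you set up matches the paper's: form the finite-dimensional ``error map'' $\mathcal F(g;\theta)$ built from the solution of Theorem \ref{1} over $\theta(\mathcal C)$ and try to solve $\mathcal F(g;\theta)=0$ for $\theta$. But the route you then take -- a transversality computation plus a quantitative implicit function theorem -- is precisely the route the paper explicitly cannot take, and the gap you flag at the end is a real obstruction, not a technicality.

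The paper avoids the IFT altogether. It observes the algebraic identity $\Theta(0,\theta)=\theta$, i.e.\ $\Theta_0=\operatorname{Id}_V$, so that $\Theta_0$ restricted to a small sphere $S^{m-1}_{\epsilon'}\subset V$ is a degree-one map into $V\setminus\{0\}$. Continuity of $\Theta_g$ in $g$ -- not differentiability -- then gives a homotopy $\Theta_{tg}$, preserving degree, from which $0\in\Theta_g(B^m)$ follows for $\|g\|$ small. No derivative of $\Theta$ in $\theta$ ever appears. Your proposal, by contrast, requires that $D_{(a,Q,\lambda)}\mathcal F$ exist and be an isomorphism onto $V$ at the base point, including in the $\lambda$-direction. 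The paper states outright that this is what they could not establish: the implicit function theorem would require $C^1$ regularity of $\Theta$ in $\theta$, which ``we currently do not'' have, because it ``would entail a better understanding of the inverse $R_\lambda$''; and again, when trying to extend the partial result of Theorem \ref{thm 4.6} (translations and rotations only) to the dilation direction, ``we were not able to get such a stronger regularity.'' So the transversality step you identify as the ``main obstacle'' is not merely hard to make quantitative -- it is an open problem, and if you had it, you would be proving Conjecture \ref{conjecture} (uniqueness and continuity of $(a,Q,\lambda)$ in $g$), which is strictly stronger than Theorem \ref{MainTheo}. Your concluding appeal to ``uniqueness in Theorem \ref{1}'' is also unsupported: Theorem \ref{1} is stated as an existence result, and the paper's degree argument is explicitly non-unique.

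To repair the proposal along the paper's lines: replace the IFT step by the observation that $\Theta(0,\theta)=\theta$ (because $\Pi g_\theta=0$ when $g=0$, forcing $u_\theta=0$), conclude $\Theta_0=\operatorname{Id}_V$, and then use only the continuity in $g$ of the solution operator from Theorem \ref{1} to run a Brouwer-degree/homotopy argument on the finite-dimensional $V$. This gives existence of $\theta$ without any control on the derivative of $\mathcal F$ in $\theta$, which is exactly what is available.
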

The naive idea is that, modulo the three operations translations, rotations, and dilations ($S_\lambda, \lambda\in \mathbb R$), we can solve the boundary value problem. In this paper, we do not understand the choice $(a, Q, \lambda)$ well enough to comment on anything stronger. We expect that this choice $(a, Q, \lambda)$ is unique and continuous with respect to $g$ (refer Conjecture \ref{conjecture}). The eventual goal is to achieve better regularity for the choice to understand the formation of singularities better and the Plateau's problem near these minimizing cones.
\subsection{Organization of the paper}
Section 2 introduces some preliminaries and sets up the required notation. We shall define weighted spaces over the Simons cone and study the Hardt-Simon foliation. Section 3 defines an approximate surface $\tilde S_\lambda$ (Definition \ref{defn:approxsurface}). The approximate surface is a glued version of the $S_\lambda$ to the corresponding quadratic cone $\mathcal C$ and will be very close to the $S_\lambda$ for $\lambda<<1$. In this section, we shall set up and study the basic notions on  $\tilde S_\lambda$. Finally, in section 4, we shall study the Dirichlet type boundary problem on $\tilde S_\lambda$ and prove Theorem \ref{intro:1} and subsequently Theorem \ref{intro:2}. We expect a stronger uniqueness result (Conjecture \ref{conjecture}) and work out a uniqueness-type result in the more straightforward case involving only translations and rotations. 

\subsection*{Acknowledgment}
I am deeply grateful to my graduate advisor, G\'abor Sz\'ekelyhidi, for his constant support, patience, and guidance. I would like to thank the Department of Mathematics, University of Notre Dame, for the financial support through my graduate studies. I am also thankful to co-advisor Nicholas Edelen and Sourav Ghosh for discussing various topics and helping me better understand the subject.\\
I would also like to thank the referee for the detailed comments, which helped in improving the correctness and clarity of the paper. 

\section{Preliminaries and Notation}\label{sec:prelim}

\begin{definition}
We say $N^n\subset \mathbb R^{n+1}$ is a hypersurface with singular set $S$ if $N$ is a multiplicity one current in $\mathbb R^{n+1}$ and the set of interior regular points $reg(N)$ is an $n-$dimensional orientable hypersurface in $\mathbb R^{n+1}$ and $S=Sing(N)\coloneqq N\backslash Reg(N). $ 
\end{definition}
We shall only work with hypersurfaces containing isolated singularities, for which the above definition should be sufficient. However, one would generally need to work with currents and varifolds. Refer to Simon \cite{simon1984lectures} and Federer \cite{federer2014geometric} for more on currents and varifolds. \\
The graph of a function $u$ on $N^n$ denoted by $graph_N(u)$ is defined as
$$graph_N(u)\coloneqq \{x+u(x)\nu_N(x): x\in N\},$$
where $\nu_N$ is the oriented unit normal on $N$. We shall also use spherical graphs but shall define them ahead. \\
For a function $u$ on $N$, let $\mathcal{M}_N(u)$ denote the mean curvature of $graph_N(u)$. The function $u$ being a critical point of the area functional is equivalent to the mean curvature $\mathcal M_N (u)=0$,  and the corresponding surfaces are called minimal surfaces. In summary, Minimal surfaces correspond to critical points of the area functional.  \\
The mean curvature $\mathcal{M}_N(u)$ is a $2^{nd}$ order quasi-linear differential equation, of the form
$$\mathcal{M}_N(u)=a_N(x,u,\nabla u)^{ij}\nabla^2_{ij}u+b_N(x,u,\nabla u).$$
We shall often call $\mathcal M_N$ the \textit{minimal surface equation}. 
Let $\mathcal{L}_N(u)$ be the linearization of $\mathcal{M}_N$ at $0$, i.e., for $u:N\rightarrow \mathbb R$, 
$$\left. \mathcal{L}_N(u)=\frac{d}{dt}\right |_{t=0} \mathcal{M}_N(tu) .$$  In general, $\mathcal{L}_N$ is given by 
\begin{equation}\label{L_N}
\mathcal{L}_N=\Delta_N+|A_N|^2,
\end{equation}
here $A_N$ is the second fundamental form of $N^n\subset \mathbb R^{n+1}$, and $\Delta_N$ is the Laplacian operator in $N$.
Denote the higher-order terms as, 
$$Q_N(u)\coloneqq \mathcal{M}_N(u)-\mathcal{L}_N(u).$$
\begin{definition}
    Given an area-minimizing manifold $N$ and
\begin{enumerate}
\item if $\lambda_k\rightarrow 0$ is a sequence such that $\lambda_k^{-1}N$ converges weakly to a minimizing cone $\mathbf C$. The cone is called a \emph{Tangent cone at $0$}. 
    \item if $\lambda_k\rightarrow\infty$ is a sequence such that $\lambda_k^{-1}N$ converges weakly to a minimizing cone $\mathbf C$. The cone is called a \emph{Tangent cone at $\infty$}. 
\end{enumerate}
\end{definition}
The tangent cones of a manifold will help describe the local and global properties of the manifold. The aim of studying these is to hopefully understand local properties using the tangent cone at 0 and explore more global questions on $N$ using the tangent cone at $\infty$. 
\begin{theorem}[Almgren \cite{almgren1966}, Fleming \cite{Fleming1962OnTO}, Simons \cite{SimonJ}]
If $\mathbf C^n \subset \mathbb R^{n+1}$ is an area-minimizing cone and $2\leq n+1\leq 7$, then $\mathbf C$ is flat. 
\end{theorem}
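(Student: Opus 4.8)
The plan is to argue by induction on $n$: a non-flat area-minimizing cone of least admissible dimension must have an isolated singularity, and such a cone can then be excluded by playing Simons' identity on the link against the stability inequality.

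\textbf{Reduction to an isolated singularity.} Suppose the statement fails and let $n$ be the smallest dimension with $2\le n+1\le 7$ admitting a non-flat area-minimizing cone $\mathbf C^n\subset\mathbb R^{n+1}$. For $n\le 2$ the statement is elementary: the link $\Sigma\coloneqq\mathbf C\cap S^n$ is a closed minimal submanifold of $S^n$ of dimension $0$ or $1$, hence a great subsphere, so $\mathbf C$ is a plane — contradiction; thus $n\ge 3$. If $\operatorname{sing}\mathbf C\ne\{0\}$, pick $x_0\in\operatorname{sing}\mathbf C\setminus\{0\}$; by the monotonicity formula and compactness of area-minimizing integral currents a tangent cone of $\mathbf C$ at $x_0$ exists and is area-minimizing, and since $x_0$ is not the vertex it splits isometrically as $\mathbb R\times\mathbf C'$ with $\mathbf C'^{\,n-1}\subset\mathbb R^{n}$ an area-minimizing cone that is not flat (otherwise $x_0\in\operatorname{reg}\mathbf C$). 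This contradicts the minimality of $n$. Hence $\operatorname{sing}\mathbf C=\{0\}$, and $\Sigma$ is a smooth closed embedded minimal hypersurface of $S^n$ that is not totally geodesic.

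\textbf{Stability and separation of variables.} Area-minimality makes $\mathbf C\setminus\{0\}$ stable, i.e.\ $\int_{\mathbf C}|A_{\mathbf C}|^2\varphi^2\le\int_{\mathbf C}|\nabla\varphi|^2$ for all $\varphi\in C_c^1(\mathbf C\setminus\{0\})$. Writing $x=r\omega$ with $\omega\in\Sigma$, one has $|A_{\mathbf C}|^2(r\omega)=r^{-2}|A_\Sigma|^2(\omega)$ and $|\nabla\varphi|^2=(\partial_r\varphi)^2+r^{-2}|\nabla_\Sigma\varphi|^2$; testing with $\varphi(r\omega)=f(r)g(\omega)$ and integrating in polar coordinates gives
\[
\Big(\int_\Sigma\big(|A_\Sigma|^2g^2-|\nabla_\Sigma g|^2\big)\Big)\Big(\int_0^\infty f^2 r^{\,n-3}\,dr\Big)\ \le\ \Big(\int_\Sigma g^2\Big)\Big(\int_0^\infty (f')^2 r^{\,n-1}\,dr\Big).
\]
Choosing $g>0$ to be a first eigenfunction of the Jacobi operator $-\Delta_\Sigma-|A_\Sigma|^2$ on the closed manifold $\Sigma$, with bottom eigenvalue $\mu_1$, the left bracket equals $-\mu_1\int_\Sigma g^2$; since the sharp one-dimensional Hardy inequality $\int_0^\infty (f')^2r^{n-1}\ge\big(\tfrac{n-2}{2}\big)^2\int_0^\infty f^2r^{n-3}$ holds (optimal, non-attained constant, using $n\ge3$), stability forces
\[
\mu_1\ \ge\ -\Big(\tfrac{n-2}{2}\Big)^2 .
\]

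\textbf{Simons' identity on the link, and conclusion.} For the minimal hypersurface $\Sigma^{n-1}\subset S^n$, Simons' identity reads $\tfrac12\Delta_\Sigma|A_\Sigma|^2=|\nabla_\Sigma A_\Sigma|^2+(n-1)|A_\Sigma|^2-|A_\Sigma|^4$; combined with the pointwise Kato inequality $|\nabla_\Sigma A_\Sigma|^2\ge|\nabla_\Sigma|A_\Sigma||^2$ it yields, distributionally on $\Sigma$, the bound $\tfrac12\Delta_\Sigma|A_\Sigma|^2\ge|\nabla_\Sigma|A_\Sigma||^2+(n-1)|A_\Sigma|^2-|A_\Sigma|^4$. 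Integrating over the closed manifold $\Sigma$ gives $\int_\Sigma\big(|\nabla_\Sigma|A_\Sigma||^2-|A_\Sigma|^4\big)\le-(n-1)\int_\Sigma|A_\Sigma|^2$, so testing the variational characterization of $\mu_1$ with the Lipschitz function $|A_\Sigma|$ produces $\mu_1\le-(n-1)$; since $\Sigma$ is not totally geodesic, $|A_\Sigma|\not\equiv0$ and this bound is genuine. Together with the previous step we would need $-(n-1)\ge-\big(\tfrac{n-2}{2}\big)^2$, i.e.\ $n^2-8n+8\ge0$, i.e.\ $n\ge4+2\sqrt2>6$, which is impossible for $3\le n\le6$. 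This contradiction forces $|A_\Sigma|\equiv0$, so $\Sigma$ is a great sphere and $\mathbf C$ is a hyperplane.

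\textbf{Main obstacle.} The genuinely hard ingredient is the reduction step: it rests on the regularity and compactness theory for area-minimizing integral currents (Federer's dimension reduction, building on Almgren), specifically the existence of area-minimizing tangent cones via monotonicity and the splitting of tangent cones at non-vertex points; without this machinery one cannot a priori exclude minimizing cones with high-dimensional singular sets, and the argument would not even get off the ground. The analytic core (the last two steps) is short but delicate in its constants: it is exactly the arithmetic $n^2-8n+8\ge0$ that places the threshold strictly between $n=6$ and $n=7$, consistent with $\mathbf C=C(S^3\times S^3)\subset\mathbb R^8$ being the first non-flat area-minimizing cone.
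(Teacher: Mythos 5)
The paper does not prove this theorem: it is quoted as a classical result and attributed to Almgren, Fleming, and Simons, so there is no internal proof to compare against. Taken on its own terms, your argument is correct and is in fact the standard modern reconstruction of Simons' theorem: Federer dimension reduction to force an isolated singularity, then the interplay between the stability inequality (with the sharp one-dimensional Hardy constant $((n-2)/2)^2$ for the radial weight $r^{n-1}$) and the integrated Simons identity on the smooth link $\Sigma \subset S^n$. The arithmetic $n^2-8n+8\ge 0\iff n\ge 4+2\sqrt2$ places the threshold correctly between $n=6$ and $n=7$, and you handle the degenerate case $|A_\Sigma|\equiv 0$ and the low-dimensional base cases $n\le 2$ appropriately.

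A couple of small points worth tightening. First, the operator $-\Delta_\Sigma-|A_\Sigma|^2$ you call ``the Jacobi operator'' is the angular part of the Jacobi operator of the \emph{cone}, not the Jacobi operator of $\Sigma\subset S^n$ (which would also carry the ambient Ricci term $-(n-1)$); the computation is unaffected, but the terminology is slightly off. Second, in the dimension-reduction step you should note that if $\mathbf C'$ were flat then Allard's theorem (not merely the definition of regular point) forces $x_0\in\operatorname{reg}\mathbf C$, and that ``$\mathbb R\times\mathbf C'$ area-minimizing $\Rightarrow$ $\mathbf C'$ area-minimizing'' is itself a nontrivial slicing fact; you flag the dimension-reduction package as the hard ingredient, which is fair, but these two sublemmas are the ones most often glossed. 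Finally, the Kato inequality you invoke is the crude pointwise one, which suffices here; the improved Kato constant $1+2/(n-1)$ is not needed to reach $n\le 6$ and would only matter for sharper spectral statements.
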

The above theorem will imply that if $N^n\mathbb \subset R^{n+1}$ is an area-minimizing hypersurface, then $\dim( sing(N))\leq n-7$. In particular, any area minimizing hypersurface $N^n$ with $n\leq 6$ is smooth. The Simons cone is defined as
$$\mathcal{C}^{3, 3}\coloneqq C(S^3\times S^3)= \{ (x,y)\in \mathbb{R}^4\times \mathbb{R}^4: |x|^2=|y|^2\}\subset \mathbb R^8, $$
it is a cone with link $S^3\times S^3$ and $sing(\mathcal{C}^{3, 3})=\{0\}$. The Simons cone was the first example of an area-minimizing minimal surface with singularity pointed out by \cite{SimonJ} and Bombieri-De Giorgi-Giusti \cite{Bombieri1969}.
\begin{definition}[Quadratic cones]
\label{prel:defn:Quadratic cones}
Define quadratic cones, also known as generalized Simons cones, as 
$$\mathcal C^{p, q}\coloneqq C(S^p\times S^q)=\{(x, y)\in\mathbb R^{p+1}\times \mathbb R^{q+1}: q|x|^2=p|y|^2\}\subset \mathbb R^{n+1},$$ where $n-1=p+q$. 
\end{definition}
The cone $\mathcal C^{p, q}$ will be minimal for all $(p,q)$, and area-minimizing when $p+q>6$ or when $(p,q)=(3,3), (2,4), (4,2)$. For ease of notation through this paper, we will denote $\mathcal{C}\coloneqq \mathcal C^{p,q}$ for one of the area-minimizing quadratic cones. Since we are interested in the closed unit ball $B_1=\{x\in \mathbb R^{n+1}: \|x\|^2\leq 1\}$, denote 
$$\mathcal{C}_1\coloneqq \mathcal{C}\cap B_1 .$$

\subsection{Weighted spaces over $\mathcal{C}_1$} \label{WeSpC}

We shall be studying the minimal surface equation over $\mathcal C_1$ in some weighted H\"older spaces. The aim of constructing these weighted H\"older spaces on $\mathcal{C}_1$ is to study the graphs of functions on $\mathcal{C}_1$, and the surfaces corresponding to these function. For the graph to be well-defined near the origin, the function on $\mathcal{C}_1$ will have to decay faster than $|x|$ as it near the origin. With respect to the minimal surface equation, we will have some lower order decays and they in spirit will correspond to surfaces near $\mathcal{C}_1$. 
\begin{definition}[Weighted H\"older norm on $\mathcal C_1$]
Let $f:\mathcal{C}_1\rightarrow \mathbb{R}$, for $0<r\leq\frac{1}{2}$, and let $f_r:\mathcal{C}\cap (B_2\backslash B_1) \rightarrow \mathbb{R}$ be $f_r(x)\coloneqq f(rx)$. The \emph{weighted H\"older norm} on $\mathcal{C}_1$ is 
\begin{equation*}
\|f\|_{C^{k, \alpha}_{\delta}(\mathcal{C}_1)}\coloneqq \sup_{r\leq\frac{1}{2}}\|r^{-\delta}f_r\|_{C^{k, \alpha}(\mathcal{C}\cap (B_2\backslash B_1))}.
\end{equation*}
We say that $f\in C^{k, \alpha}_{\delta}(\mathcal{C}_1)$ if  $\|f\|_{C^{k, \alpha}_{\delta}(\mathcal{C}_1)}$ is finite.     
\end{definition}
The rescaled function $f_r$ maps the annulus $B_2\backslash B_1 \rightarrow B_{2r}\backslash B_r$, and $r^{-\delta}$ restricts the rate at which the function decays as we approach the origin, which will in turn  limit the class of functions we are working with. To demonstrate this, let $\|f\|_{C^{k, \alpha}_{\delta}(\mathcal{C}_1)}\leq C$ then $|f(x)|\leq C|x|^{\delta} $. Hence, the function grows at a rate bounded by $\delta$.
\begin{prop}
The weighted H\"older space $C^{k,\alpha}_\delta (\mathcal C_1)$ satisfies the following properties,
\begin{enumerate}
\item $C^{k, \alpha}_\delta(\mathcal{C}_1)$ is a Banach space.
\item If $f\in C^{k, \alpha}_{\delta}(\mathcal{C}_1)$, $i\leq k$, then for fixed $0<r<\frac{1}{2}$,
$$\nabla^i (r^{-\delta}f(rx))=r^{-\delta+i}(\nabla^i f)(rx),$$
hence $\nabla^i f \in C^{k-i, \alpha}_{\delta-i}(\mathcal{C}_1) $ and, 
$$\|\nabla^i f\|_{C^{k-i, \alpha}_{\delta-i}} \leq C \|f\|_{C^{k, \alpha}_{\delta}}.$$
\item If, $k+\alpha\leq l+\beta$, and $\delta \leq \delta'$ then 
$$ C^{l, \beta}_{\delta'}(\mathcal{C}_1)\subseteq C^{k, \alpha}_\delta(\mathcal{C}_1). $$
\end{enumerate}
\end{prop}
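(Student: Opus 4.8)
The three assertions are the standard package of facts for scale-invariant (conical) weighted H\"older spaces, and I would prove them in the order (2), (3), (1), since the scaling identity in (2) is the engine behind the other two.

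For (2), the plan is a direct computation. Fixing $0<r<\tfrac12$ and writing $h(x)=r^{-\delta}f(rx)$, the ordinary chain rule on $\mathcal C\cap(B_2\setminus B_1)$ gives $\nabla^i h(x)=r^{-\delta}r^{i}(\nabla^i f)(rx)=r^{-\delta+i}(\nabla^i f)_r(x)$; note the cone is dilation invariant so $\nabla^i$ is the intrinsic covariant derivative and the dilation $x\mapsto rx$ is (up to the factor $r$) an isometry of the relevant annuli, which is what makes the exponent bookkeeping clean. Taking the $C^{k-i,\alpha}$ norm of both sides on the fixed annulus and then the supremum over $r\le\tfrac12$ yields $\|\nabla^i f\|_{C^{k-i,\alpha}_{\delta-i}(\mathcal C_1)}\le\|f\|_{C^{k,\alpha}_\delta(\mathcal C_1)}$, which is the claimed estimate and in particular shows $\nabla^i f\in C^{k-i,\alpha}_{\delta-i}(\mathcal C_1)$.

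For (3), I would reduce to the classical unweighted embedding. For fixed $r\le\tfrac12$, apply the ordinary inclusion $C^{l,\beta}(\mathcal C\cap(B_2\setminus B_1))\hookrightarrow C^{k,\alpha}(\mathcal C\cap(B_2\setminus B_1))$ (valid since $k+\alpha\le l+\beta$ on a fixed bounded piece of the smooth manifold $\mathcal C\setminus\{0\}$) to the function $r^{-\delta'}f_r$, obtaining $\|r^{-\delta'}f_r\|_{C^{k,\alpha}}\le C\|r^{-\delta'}f_r\|_{C^{l,\beta}}$ with $C$ independent of $r$. Multiplying by $r^{\delta'-\delta}$ gives $\|r^{-\delta}f_r\|_{C^{k,\alpha}}\le C\,r^{\delta'-\delta}\|r^{-\delta'}f_r\|_{C^{l,\beta}}$, and since $r\le\tfrac12<1$ and $\delta'-\delta\ge0$ we have $r^{\delta'-\delta}\le1$; taking the supremum over $r\le\tfrac12$ produces $\|f\|_{C^{k,\alpha}_\delta(\mathcal C_1)}\le C\|f\|_{C^{l,\beta}_{\delta'}(\mathcal C_1)}$, hence the inclusion of spaces.

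For (1), the norm axioms (homogeneity, triangle inequality, nondegeneracy) follow immediately from the corresponding properties of the unweighted H\"older norms and the fact that $\{\mathcal C\cap(B_{2r}\setminus B_r):r\le\tfrac12\}$ covers $\mathcal C_1\setminus\{0\}$. The substantive point is completeness: given a Cauchy sequence $(f_m)$ in $C^{k,\alpha}_\delta(\mathcal C_1)$, for each fixed $r$ the rescalings $r^{-\delta}(f_m)_r$ are Cauchy in the complete space $C^{k,\alpha}(\mathcal C\cap(B_2\setminus B_1))$, so they converge; these local limits are mutually consistent on overlaps of the annuli $\mathcal C\cap(B_{2r}\setminus B_r)$ because $(f_m)$ converges pointwise on $\mathcal C_1\setminus\{0\}$, so they patch to a function $f$ on $\mathcal C_1\setminus\{0\}$. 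The uniform bound $\sup_m\|f_m\|_{C^{k,\alpha}_\delta}<\infty$ passes to $f$, giving $|f(x)|\le C|x|^\delta$ (and similar decay for derivatives), which lets $f$ extend to $\mathcal C_1$ with $f\in C^{k,\alpha}_\delta(\mathcal C_1)$; finally $\|f_m-f\|_{C^{k,\alpha}_\delta}\to0$ by taking the supremum over $r$ of the local convergence together with an $\varepsilon/3$ argument using the Cauchy property. I expect this gluing-and-extension step to be the only mildly delicate part; everything else is bookkeeping with the dilation scaling from (2).
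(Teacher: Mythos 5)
The paper states this proposition without proof, treating all three items as routine facts about scale-invariant weighted H\"older spaces. Your argument is correct and is the standard one: item (2) is the chain-rule scaling identity (with constant $C=1$ since the rescaling is exact), item (3) follows from the unweighted embedding on the fixed annulus together with $r^{\delta'-\delta}\le 1$ for $r\le\tfrac12$ and $\delta'\ge\delta$, and item (1) follows from completeness of $C^{k,\alpha}$ on the fixed annulus, patching of local limits, and taking $n\to\infty$ in the Cauchy inequality before taking the supremum over $r$. There is nothing to compare against; your writeup supplies exactly the proof the authors leave implicit.
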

The second property implies that the linear operator $\Delta_{\mathcal{C}}: C^{2, \alpha}_{\delta}(\mathcal{C}_1)\rightarrow C^{0, \alpha}_{\delta-2}(\mathcal{C}_1)$ is well-defined, where $\Delta_{\mathcal{C}}$ is the Laplacian on $\mathcal{C}$. Since we are interested in graphs over $\mathcal{C}_1$, we will primarily use with $\delta>1$  and it will also be less than a constant which we shall define ahead.\\
Since, $\mathcal{C}$ is scale-invariant we have the following scaling properties for $\mathcal{L}_{\mathcal{C}}$, denoting $u_r(x)=u(rx)$, then
\begin{equation}\label{scaling1}
\mathcal{L_{C}}(u_r)=r^2(\mathcal{L_{C}}u)_r,
\end{equation}
and $\mathcal{M_{C}}$ satisfies a similar scaling property, 
\begin{equation}\label{scaling2}
(\mathcal{M_{C}}u)_r=\frac{1}{r}\mathcal{M_{C}}\left(\frac{1}{r}u_r\right).
\end{equation}

\subsection{Some minimal surfaces near quadratic cones}\label{CHS}
In this subsection we shall now study the minimal surface equation over $\mathcal C$, refer \cite{CHS} and Simon-Solomon \cite{SimonSolomon}. Using the second fundamental form of $\mathcal C$, eq.(\ref{L_N}) can be written as, 
$$\mathcal{L}_{\mathcal{C}}=\Delta_{\mathcal{C}}+\frac{(n-1)}{|x|^2}. $$
Let us denote the link of $\mathcal{C}$ as, 
$$ \Sigma\coloneqq {\mathcal{C}} \cap \partial B_1=\sqrt{\frac{p}{n-1}}S^p \times \sqrt{\frac{q}{n-1}}S^q. $$
Given $x\in \mathcal{C}$ we can write $x=r \omega$, where $r\in (0, \infty)$ and $\omega \in \Sigma.$ Rewriting $\mathcal{L}_{\mathcal{C}}$ in these polar coordinates we get, 
\begin{equation}\label{L_C polar}
\mathcal{L}_{\mathcal{C}}=\frac{\partial^2}{\partial r^2}+\frac{n-1}{r}\frac{\partial}{\partial r}+\frac{1}{r^2}\mathcal{L}_{\Sigma},
\end{equation}
where $\mathcal{L}_{\Sigma}= \Delta_{\Sigma} +(n-1).$ Since $\Sigma$ is compact, we can decompose $L^2(\Sigma)$ using an orthonormal basis $\{ \phi_i\}_{i=1}^{\infty}$ and corresponding eigenvalues $\mu_1\leq \mu_2 \leq \dots \rightarrow \infty,$ such that
\begin{equation}\label{eigenvalue}
\mathcal{L}_{\Sigma} \phi_i=-\mu_i \phi_i,
\end{equation}
and, $\langle\phi_i, \phi_j\rangle=\delta_{ij}$.
If we decompose $u\in C^{2, \alpha}_\delta(\mathcal{C}_1)$ in polar coordinates using the above basis, we get
$$u(r\omega)=\sum_{i=1}^{\infty}a_i(r) \phi_i(\omega). $$
In combination with the polar form eq.(\ref{L_C polar}), $\mathcal{L_C}u=0$ is equivalent to
\begin{equation}\label{homoDE}
r^2 a^{\prime\prime}_i(r)+(n-1)ra_i^{\prime}(r)-\mu_ia_i(r)=0.
\end{equation} 
Solving this homogeneous differential equation, we obtain a solution of the form $a_i(r)=r^{\gamma_i^+}$ or $r^{\gamma_i^-}$, where
\begin{equation}\label{gamma}
\gamma_i^{\pm}=\frac{-(n-2)\pm \sqrt{(n-2)^2+4\mu_i}}{2}.
\end{equation}
The key idea to solving this homogeneous differential equation is to use the spectrum of $S^p$ and $S^q$ to deduce the eigenvalues of the operator $\mathcal L_\Sigma$. Let $\nu_j^p$ denote the $j$th eigenvalue of the Laplacian on $\sqrt{\frac{p}{n-1}} S^p$ and it is given by 

$$\nu_j^p=\frac{(j-1)(n-1)(j+p-2)}{p}; \: j=1, 2, \dots.$$
The eigenspace corresponding to $\nu_j^p$ is $E_j^p\coloneqq\{$degree $j-1$ harmonic function on $\mathbb R^{p+1}$ restricted to the sphere\}. Let $\psi_j^p\in E^p_j$ and $\psi_k^q\in E^q_k$ then,
\begin{equation}
    \Delta_{\Sigma} (\psi^p_j\psi^q_k)=(\nu_j^p+\nu_k^q)\psi_j^p\psi_k^q.
\end{equation}
Hence, $\nu_j^p+\nu_k^q$ is an eigenvalue of $\Delta_{\Sigma} $ and its eigenvector is $\psi_j^p\psi_k^q.$ Thus the set of eigenvalues of  $\Delta_{\Sigma}$ is given by
$$\sigma(\Delta_\Sigma)=\{\nu_j^p+\nu_k^q: j,k=1,2,3, \dots \}.$$
We could get the same eigenvalue for different combinations of $j$ and $k$. So, the eigenspace will be the direct sum of all these combinations. The eigenspace corresponding to the $i^{th}$  eigenvalue of $\Delta_\Sigma$ is,
$$E_i(\Delta_{\Sigma})=\bigoplus_{I_i} E_j^p\otimes E_k^q$$
where $I_i=\{(j,k): \nu_j^p+\nu_k^q= i^{th} \mbox{eigenvalue of }\Delta_{\Sigma}  \} $. As above let $\mu_i$ represent the eigenvalue of $\mathcal{L}_{\Sigma}=\Delta_{\Sigma}+(n-1)$, then $\mu_i=\nu_j^p+\nu_k^q-(n-1)$. We can write down the first few eigenvalues and the corresponding $\gamma_i$'s from eq.(\ref{gamma}) as shown below,

\begin{table}[H]
    \centering
    \begin{tabular}{|| c ||c  |c |c||} \hline 
 i& (j,k)& $\mu_i$ &$\gamma_i^{\pm}$\\ \hline  \hline
 1& (1,1)& $-(n-1)$&$\frac{-(n-2)\pm \sqrt{(n-2)^2-4(n-1)}}{2}$\\ \hline   
 2& (1,2), (2,1)& 0 &0, $-(n-2)$\\ \hline 
 3& (2,2)&$n-1$&$1$, $-(n-1$)\\ \hline
 4& (3,1)&$ \left(1+\frac{2}{\max\{p,q\}}\right)(n-1)$  & $\gamma^+_4>1, \gamma^-_4<-2$ \\\hline
 \end{tabular}
    \caption{Eigenvalues of $\mathcal{L}_{\Sigma}$}
    \label{table}
\end{table}
If $i\geq 4$ then $\gamma^+_i>1$ and function will make graphical sense and hence it and we can solve for $u\in C^{k, \alpha}_\delta(\mathcal C_1)$, but what about the first three modes. The below result due to Simon-Solomon computes $\phi_i$ for $i=1,2,3$, which, combined with the values of $\gamma_i^+$, should tell us what surfaces they correspond to. 
\begin{theorem}[\cite{SimonSolomon}]\label{eigenvector} Let $\phi_i\in E_i(\mathcal{L}_\Sigma)$, i.e, $\mathcal L_\Sigma \phi_i=\mu_i\phi_i$,
\begin{enumerate}
\renewcommand{\labelenumi}{(\roman{enumi})}
    \item $\phi_1(w)=c$, for $c\in \mathbb R\backslash \{0\}$,
    \item  $\phi_2(w)=a.\nu(w)$, for $a\in \mathbb R^{n+1}\backslash\{ \mathbf 0\}$ \emph{(Translations)},
    \item $\phi_3(w)=(Aw).\nu(w)$, for $A\in \mathfrak{so}(n+1)$ \emph{(Rotations)}. 
\end{enumerate}
\end{theorem}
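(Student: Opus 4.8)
The plan is to reduce the statement to the identification of the three lowest eigenspaces of $\mathcal{L}_\Sigma = \Delta_\Sigma + (n-1)$, and then to exhibit for each of them a spanning family of Jacobi fields coming from rigid motions of $\mathbb{R}^{n+1}$. From the spectral decomposition $\sigma(\Delta_\Sigma) = \{\nu_j^p + \nu_k^q\}$ and the computation preceding Table \ref{table}, the three smallest eigenvalues of $\Delta_\Sigma$ are $0$, $n-1$, $2(n-1)$, with eigenspaces $E_1^p\otimes E_1^q$, $(E_1^p\otimes E_2^q)\oplus(E_2^p\otimes E_1^q)$ and $E_2^p\otimes E_2^q$ respectively; equivalently, in the normalization of eq.(\ref{eigenvalue}), $\mathcal{L}_\Sigma$ has $\mu_1 = -(n-1)$, $\mu_2 = 0$, $\mu_3 = n-1$. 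Part (i) is then immediate: $E_1 = E_1^p\otimes E_1^q = \ker\Delta_\Sigma$ is the space of constants, since $\Sigma$ is connected.

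First I would record the explicit form of the unit normal. Writing a point of $\Sigma$ as $w = (x,y)$ with $|x|^2 = p/(n-1)$ and $|y|^2 = q/(n-1)$, and normalizing the gradient of the defining function $q|x|^2 - p|y|^2$ of $\mathcal{C}$ (using $q^2|x|^2 + p^2|y|^2 = pq$ on $\Sigma$), one gets $\nu(w) = (pq)^{-1/2}(qx,\,-py)$. For part (ii), this gives for $a = (a_1,a_2)\in\mathbb{R}^{p+1}\times\mathbb{R}^{q+1}$ that $a\cdot\nu(w) = (pq)^{-1/2}(q\,a_1\cdot x - p\,a_2\cdot y)$, the restriction to $\Sigma$ of a linear function on $\mathbb{R}^{n+1}$; conversely any $b_1\cdot x + b_2\cdot y$ arises this way by solving for $a$. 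Since the coordinate functions are linearly independent on each sphere factor and $\Sigma$ is a Riemannian product, $a\mapsto a\cdot\nu|_\Sigma$ is injective, and its image lies in the $(n+1)$-dimensional space $(E_1^p\otimes E_2^q)\oplus(E_2^p\otimes E_1^q) = E_2$, hence equals $E_2$. (Conceptually: the constant field $a$ generates the family of minimal cones $\mathcal{C}+ta$, so $a\cdot\nu$ is a Jacobi field on $\mathcal{C}$, homogeneous of degree $0$, and thus restricts to an $\mathcal{L}_\Sigma$-eigenfunction with eigenvalue $\mu_2 = 0$.)

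For part (iii) I would compute $(Aw)\cdot\nu(w)$ for $A\in\mathfrak{so}(n+1)$ in block form, with blocks $A_{11},A_{12},A_{21},A_{22}$ (so $A_{11}^T = -A_{11}$, $A_{22}^T = -A_{22}$, $A_{21} = -A_{12}^T$). Using $\nu(w) = (pq)^{-1/2}(qx,-py)$ and $x^T A_{11} x = y^T A_{22} y = 0$, one finds for $w = (x,y)\in\Sigma$ that $(Aw)\cdot\nu(w) = (pq)^{-1/2}\bigl(q\,x^T A_{12}y - p\,x^T A_{21}^T y\bigr) = (n-1)(pq)^{-1/2}\,x^T A_{12}\,y$, a bilinear form in $(x,y)$, hence an element of $E_2^p\otimes E_2^q = E_3$. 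Every bilinear form $x^T M y$ is obtained by taking $A_{12}$ proportional to $M$, $A_{21} = -A_{12}^T$ and $A_{11} = A_{22} = 0$; so $A\mapsto (Aw)\cdot\nu|_\Sigma$ is onto $E_3$, with kernel the stabilizer subalgebra $\mathfrak{so}(p+1)\oplus\mathfrak{so}(q+1)$, and the identity $\binom{n+1}{2} - \binom{p+1}{2} - \binom{q+1}{2} = (p+1)(q+1) = \dim E_3$ confirms the count. (Conceptually: $e^{tA}$ is a family of isometries, so $(Aw)\cdot\nu$ is a Jacobi field on $\mathcal{C}$, homogeneous of degree $1$, restricting to an $\mathcal{L}_\Sigma$-eigenfunction with eigenvalue $\mu_3 = n-1$.)

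The substance of the argument is just these two explicit computations — the formula for $\nu$ on $\Sigma$ and the block-matrix identity — together with the elementary dimension identity; I do not expect a genuine obstacle, this being the computation of Simon--Solomon specialised to a product-of-spheres link where the whole spectrum of $\Delta_\Sigma$ is known explicitly. The one point that needs care is that (iii) is a surjectivity statement rather than an injectivity one: the rotations that fix $\mathcal{C}$, namely $\mathfrak{so}(p+1)\oplus\mathfrak{so}(q+1)$, map to the zero function, so distinct $A$ can give the same $\phi_3$.
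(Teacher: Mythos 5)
Your proof follows the same route as the paper's: read off $E_1,E_2,E_3$ from the product-of-spheres decomposition, then exhibit $a\cdot\nu$ and $(Aw)\cdot\nu$ as spanning families. Where you differ is in filling in the details the paper leaves implicit, and on one point you are actually more accurate than the source. The paper states the normal as $\nu_{\mathcal C}(x,y)=(-y,x)$ and writes $\phi_2=a_1\cdot w_2+a_2\cdot w_1$ with ``$a_1\in E_1^p$,'' whereas the gradient of $q|x|^2-p|y|^2$, normalized on $\Sigma$ using $|x|^2=p/(n-1)$, $|y|^2=q/(n-1)$, gives $\nu=(pq)^{-1/2}(qx,-py)$ as you computed; $(-y,x)$ is not even a vector in $\mathbb R^{p+1}\times\mathbb R^{q+1}$ when $p\neq q$, and one can check it fails to annihilate the tangent space even when $p=q$ (the paper appears to mean $(x,-y)$). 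The conclusion that $a\cdot\nu$ is a generic linear function on $\Sigma$, hence spans $E_2=(E_1^p\otimes E_2^q)\oplus(E_2^p\otimes E_1^q)$, holds either way, so this is a harmless slip in the paper that your version repairs. For (iii) the paper simply asserts ``$A$ can be constructed from $a_1$ and $a_2$'' without exhibiting it; your block-matrix computation $(Aw)\cdot\nu=(n-1)(pq)^{-1/2}x^TA_{12}y$ makes the surjectivity onto $E_3=E_2^p\otimes E_2^q$ explicit, and the dimension count $\binom{n+1}{2}-\binom{p+1}{2}-\binom{q+1}{2}=(p+1)(q+1)$ correctly identifies the kernel with the stabilizer $\mathfrak{so}(p+1)\oplus\mathfrak{so}(q+1)$, a point worth making since, as you note, (iii) is a surjectivity claim modulo that stabilizer. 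In short: same skeleton, but your write-up is the more careful of the two.
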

\begin{proof}
As above, let $E_i^p, E_i^q$ for $i=1,2$ denote the $i$th eigenspace of $\Delta_{\sqrt{\frac{p}{n-1}}S^p}$ and $\Delta_{\sqrt{\frac{q}{n-1}}S^q}$ respectively. The proof uses the knowledge we have about $E_1^p$ and $E_2^p$, the first two eigenspaces of the Laplacian on $\sqrt{\frac{p}{n-1}}S^p$ in combination with Table \ref{table}. For $w\in \Sigma$, let $w=(w_1, w_2)\in \sqrt{\frac{p}{n-1}}S^p\times \sqrt{\frac{q}{n-1}}S^q$ . Then, $\nu_{\mathcal{C}}(w)=(-w_2, w_1)$ is the normal vector at $w\in \mathcal C$. 
    \begin{enumerate}
        \item  $i=1$, i.e, $(j,k)=(1,1);$ corresponds to both constants functions on $S^p$ and $S^q$. Hence, $\phi_1(w)=c$, constant. 
        \item  $i=2$, i.e, $(j,k)=(1,2)$ or $(2,1);$ hence the eigenvector $\phi_2$ lies in  $(E_1^p \otimes E_2^q) \oplus (E_2^p\otimes E_1^q)$. For $a_1 \in E_1^p$and $a_2\in E_1^q$, then $\phi_2$ is of the form,  $$\phi_2(w)=a_1\cdot w_2+a_2\cdot w_1= a\cdot \nu(w),$$
where $a=(-a_1, a_2)\in \Sigma$. 
        \item  $i=3$, i.e, $(j,k)=(2,2);$ hence $\phi_3$ lies in $E_2^p\otimes E_2^q$, and hence, 
    $$\phi_3(w)=(a_1\cdot w_1)(a_2\cdot w_2)$$
we can rewrite this in the form of $(Aw)\cdot \nu(w)$, where $A$ can be constructed using $a_1$ and $a_2$ where $A \in \mathfrak{so}(n+1)$. 
    \end{enumerate}
\end{proof}
From Table \ref{table} and the above theorem note that, 
\begin{itemize}
    \item for $i=1$ where $-2\leq \gamma_1^+<0$, and $\phi_1=$ constant, with equality occuring for $\mathcal{C}^{3,3}$. This case corresponds to Hardt-Simon foliations (Section \ref{sec:HSFoliations}).
    \item  for $i=2$ where $\gamma_2^+=0$ and $\phi_2(w)=a. \nu(w), a\in \mathbb R^{n+1}\backslash \{0\}$ and corresponds to translations. 
    \item for $i=3$ where $\gamma_3^+=0$ and $\phi_3(w)=(Aw).\nu(w)$, for $A\in \mathfrak{so}(n+1)$ and corresponds to rotations. 
\end{itemize}
Since only graphs that decay at a rate greater than 1 are well-defined hypersurfaces over $\mathcal C_1$, we want to project $C^{2, \alpha}(\Sigma)$ to $i\geq 4$ and work over that set.

\begin{definition} \label{defn:Pi}
    For $g\in C^{2, \alpha}(\Sigma)$, define $\Pi: C^{2, \alpha}(\Sigma)\rightarrow C^{2, \alpha}(\Sigma)$    
    $$\Pi g\coloneqq \sum_{i=4}^{\infty}\langle g, \phi_i \rangle \phi_i,$$
    and define $ H: C^{2, \alpha}(\Sigma) \rightarrow C^{2, \alpha}_{\delta}(\mathcal{C}_1)$,  
$$\label{Hg}  Hg \coloneqq\sum_{i=4}^{\infty}\langle g, \phi_i \rangle \phi_i r^{\gamma_i^+}.$$
\end{definition}
$\Pi$g is the projection in $C^{2, \alpha}(\Sigma)$ to the complement of a finite dimensional space spanned by three cases in Theorem \ref{eigenvector}. For $u\in C^{2, \alpha}_{\delta}(\mathcal{C}_1)$, $\Pi u$ is going to denote $\Pi (u|_{\Sigma})$. $Hg$ is then Jacobi field with boundary $\Pi g$ i.e, solves the linear problem $\mathcal{L_C}(u)=0$ on $\mathcal{C}$ and $ \Pi (Hg)=\Pi g$.\\
Throughout this paper, we are going to assume $1<\delta<\gamma_4^+$, we can prove all results for $\delta>\gamma_4^+$ as well but will have to choose a $\Pi$ accordingly. Solving the linear problem and studying the Jacobi fields gives insights into solving the nonlinear minimal surface equations. With the goal to find more examples of minimal hypersurfaces with isolated singularities, \cite{CHS} proved the following theorems, 

\begin{theorem}[{\cite[Corollary 1.2]{CHS}}] \label{LinearC}
Consider the linearized operator, $\mathcal{L}_{\mathcal{C}}:C^{2, \alpha}_{\delta}(\mathcal{C}_1)\rightarrow C^{0, \alpha}_{\delta-2}(\mathcal{C}_1)$, and let $f\in C^{0, \alpha}_{\delta-2}(\mathcal{C}_1)$, $g\in C^{2, \alpha}(\Sigma)$, where $1<\delta<\gamma_4^+$. Then there exists a unique $u\in C^{2, \alpha}_{\delta}(\mathcal{C}_1)$ such that, 
    \begin{alignat*}{2}
       \mathcal{L}_{\mathcal{C}}u &=f &\quad &\text{on }\mathcal{C}_1,\\
                \Pi u&=\Pi g &      &\text{on }\Sigma,
\end{alignat*}
and $u$ satisfies the following Schauder-type estimates, 
$$\|u\|_{C^{2, \alpha}_{\delta}}\leq C(\|f\|_{C^{0, \alpha}_{\delta-2}}+\|\Pi g\|_{C^{2, \alpha}}).$$
\end{theorem}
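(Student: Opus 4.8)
The plan is to reduce the boundary value problem for $\mathcal{L}_{\mathcal{C}}$ on $\mathcal{C}_1$ to the ODE analysis already carried out for the homogeneous equation \eqref{homoDE}, together with a fixed-point/variational construction for the inhomogeneous part, and then to glue in the boundary data via the map $H$. Concretely, write $u = Hg + v$, where $Hg \in C^{2,\alpha}_\delta(\mathcal{C}_1)$ is the Jacobi field with $\mathcal{L}_{\mathcal{C}}(Hg)=0$ and $\Pi(Hg) = \Pi g$ (this is well-defined because for $i \ge 4$ we have $\gamma_i^+ > 1 > \delta$ only if... — here one uses $1 < \delta < \gamma_4^+$, so $r^{\gamma_i^+} \in C^{2,\alpha}_{\gamma_i^+} \subseteq C^{2,\alpha}_\delta$, and convergence of the series follows from the Weyl-type growth of the $\mu_i$ and elliptic estimates on $\Sigma$). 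Then the problem becomes: find $v \in C^{2,\alpha}_\delta(\mathcal{C}_1)$ with $\mathcal{L}_{\mathcal{C}} v = f$ on $\mathcal{C}_1$ and $\Pi v = 0$ on $\Sigma$. So the heart of the matter is the surjectivity (with estimates) of $\mathcal{L}_{\mathcal{C}} : \{v \in C^{2,\alpha}_\delta(\mathcal{C}_1) : \Pi v = 0\} \to C^{0,\alpha}_{\delta-2}(\mathcal{C}_1)$, plus injectivity on that subspace.

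For the a priori estimate I would argue by contradiction and scaling. Suppose there is a sequence $v_j$ with $\|v_j\|_{C^{2,\alpha}_\delta} = 1$, $\Pi v_j = 0$, and $\|\mathcal{L}_{\mathcal{C}} v_j\|_{C^{0,\alpha}_{\delta-2}} \to 0$. Pick radii $r_j$ nearly achieving the weighted sup norm, rescale by $u_j = r_j^{-\delta}(v_j)_{r_j}$ using the scaling identity \eqref{scaling1}; on a fixed annulus the $u_j$ are bounded in $C^{2,\alpha}$, so a subsequence converges in $C^{2}_{loc}$ to a nonzero $u_\infty$ on some portion of $\mathcal{C}$ with $\mathcal{L}_{\mathcal{C}} u_\infty = 0$ and polynomial growth controlled by $\delta$. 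Decomposing $u_\infty = \sum a_i(r)\phi_i(\omega)$ into the $\Sigma$-eigenbasis \eqref{eigenvalue}, each $a_i$ solves \eqref{homoDE}, hence is a combination of $r^{\gamma_i^\pm}$; the growth bound $|u_\infty(x)| \le C|x|^\delta$ together with $1 < \delta < \gamma_4^+$ and Table \ref{table} forces $u_\infty$ to be built only out of modes $i \ge 4$ with the $r^{\gamma_i^+}$ branch — but those are exactly the modes killed by the constraint $\Pi v_j = 0$ in the limit (one has to be careful that the constraint survives the rescaling: if $r_j \to 0$ the limiting domain is an infinite cone with no boundary, so instead one must handle two regimes, $r_j$ bounded below and $r_j \to 0$, separately, and in the $r_j \to 0$ case the decay rate argument near the tip directly gives $u_\infty \equiv 0$). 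Either way we get $u_\infty \equiv 0$, contradicting nontriviality. This also yields injectivity.

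For existence, with the estimate in hand I would approximate $\mathcal{C}_1$ by $\mathcal{C} \cap (B_1 \setminus B_\epsilon)$, solve the classical Dirichlet problem there for $\mathcal{L}_{\mathcal{C}}$ with the projected boundary condition handled via a finite-dimensional Lyapunov–Schmidt correction against $\mathrm{span}\{\phi_i\}_{i \le 3}$ (adding the right multiple of the Jacobi fields $r^{\gamma_i^-}\phi_i$ or using that the cokernel is spanned by those low modes), obtain uniform weighted estimates from the a priori bound, and pass to the limit $\epsilon \to 0$. The main obstacle is the interplay at the low modes $i = 1, 2, 3$: because $\gamma_1^+ \in [-2,0)$ and $\gamma_2^+ = \gamma_3^+ = 0$, these modes do not decay and cannot be controlled in $C^{2,\alpha}_\delta$ directly, which is precisely why the projection $\Pi$ is there; getting the Fredholm bookkeeping right — showing the kernel/cokernel of $\mathcal{L}_{\mathcal{C}}$ between the weighted spaces is exactly the three-dimensional space from Theorem \ref{eigenvector}, and that imposing $\Pi u = \Pi g$ restores unique solvability — is the delicate accounting step. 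Since this is quoted as {\cite[Corollary 1.2]{CHS}}, the cleanest route in the paper is to cite it directly; the sketch above records how it is proved.
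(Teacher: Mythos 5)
Your reduction $u = Hg + v$ matches the paper's setup (the map $H$ is defined before the theorem), but from there you take a genuinely different route. The paper (following \cite{CHS}) proceeds directly: decompose both $u$ and $f$ in the eigenbasis $\{\phi_i\}$ of $\mathcal{L}_\Sigma$, solve the resulting inhomogeneous ODE $r^2a_i'' + (n-1)ra_i' - \mu_ia_i = r^2f_i$ for each mode by variation of parameters around the homogeneous solutions $r^{\gamma_i^\pm}$, and pick the integration constants so that the boundary condition $\Pi u = \Pi g$ holds and the solution stays in $C^{2,\alpha}_\delta$; the Schauder estimate is then assembled mode by mode. You instead prove the a priori bound by a contradiction/blow-up argument (rescaling by $r_j^{-\delta}(v_j)_{r_j}$ and classifying limits of $\mathcal{L}_\mathcal{C}$-harmonic functions with growth $\le \delta$) and get existence by exhausting $\mathcal{C}_1$ by annuli $\mathcal{C}\cap(B_1\setminus B_\epsilon)$ with a Lyapunov--Schmidt correction at the low modes. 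Both routes are sound; the ODE route is more explicit and is what the cited Corollary 1.2 of \cite{CHS} actually does, while your compactness route is more robust (it does not lean as hard on separation of variables, only on classifying the kernel) but hides the technicalities in two places you correctly flag: the two blow-up regimes ($r_j\to 0$ versus $r_j$ bounded away from zero, where the boundary constraint $\Pi v_j=0$ must be carried through the limit), and the passage $\epsilon\to 0$ in the exhaustion, where one must choose inner boundary data at $r=\epsilon$ compatibly so the uniform weighted estimate applies. Note also a small slip in your parenthetical: you want $\gamma_i^+ \ge \gamma_4^+ > \delta > 1$ for $i\ge 4$ (not ``$\gamma_i^+ > 1 > \delta$''), which you in effect correct in the next clause; that inequality plus a Weyl-type bound on $\mu_i$ is indeed what makes the series defining $Hg$ converge in $C^{2,\alpha}_\delta$.
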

The proof uses the Fourier decomposition of $u$ and $g$ as mentioned above, in which it uses the solution of the homogeneous differential equation to solve the non-homogeneous differential equation, 
$$r^2 a^{\prime\prime}_i(r)+(n-1)ra_i^{\prime}(r)-\mu_ia_i(r)=r^2 f_j ,$$
where $f_j(r)=\int_\Sigma f(rw)\phi_i(w)dw$. Once you have this ODE solved, with the right choice of constants we can ensure that the boundary conditions are satisfied. 

\begin{theorem}[{\cite[Theorem 2.1]{CHS}}]\label{CHSthm}
Consider the quasi-linear mean curvature operator $\mathcal{M}_{\mathcal{C}_1}:C^{2, \alpha}_{\delta}(\mathcal{C}_1)\rightarrow C^{0, \alpha}_{\delta-2}(\mathcal{C}_1)$ as described above, and let $g$ on $\Sigma$ with $\|g\|_{C^{2, \alpha}}$ is small, then there exists a unique solution $u\in C^{2, \alpha}_{\delta}(\mathcal{C}_1)$ satisfying, 
 \begin{alignat*}{2}
       \mathcal{M}_{\mathcal{C}}u &=0 &\quad &\text{on }\mathcal{C}_1,\\
                \Pi u&=\Pi g &      &\text{on }\Sigma,
\end{alignat*}
and satisfies the estimate
$$\|u\|_{C^{2, \alpha}_\delta}\leq C(p,q,\delta, \alpha)\|g\|_{C^{2,\alpha}}. $$
\end{theorem}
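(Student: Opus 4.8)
The plan is to run a standard fixed-point argument for the quasilinear problem, using the solution operator for the linearized problem from Theorem \ref{LinearC} as the inverse. Write $\mathcal{M}_{\mathcal{C}} = \mathcal{L}_{\mathcal{C}} + Q_{\mathcal{C}}$, where $Q_{\mathcal{C}}(u) = \mathcal{M}_{\mathcal{C}}(u) - \mathcal{L}_{\mathcal{C}}(u)$ collects the higher-order terms. Then the boundary value problem $\mathcal{M}_{\mathcal{C}}u = 0$, $\Pi u = \Pi g$ is equivalent to $\mathcal{L}_{\mathcal{C}} u = -Q_{\mathcal{C}}(u)$, $\Pi u = \Pi g$. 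Letting $G$ denote the bounded linear solution operator $(f,g)\mapsto u$ furnished by Theorem \ref{LinearC} (mapping $C^{0,\alpha}_{\delta-2}(\mathcal{C}_1)\times C^{2,\alpha}(\Sigma)\to C^{2,\alpha}_\delta(\mathcal{C}_1)$ with norm bounded by a constant $C_0 = C_0(p,q,\delta,\alpha)$), the problem becomes the fixed-point equation
\begin{equation*}
u = \mathcal{T}(u) \coloneqq G\bigl(-Q_{\mathcal{C}}(u),\, g\bigr).
\end{equation*}
First I would set up a closed ball $\mathcal{B}_\rho = \{u \in C^{2,\alpha}_\delta(\mathcal{C}_1) : \|u\|_{C^{2,\alpha}_\delta}\le \rho\}$ and show $\mathcal{T}$ maps $\mathcal{B}_\rho$ into itself and is a contraction there, provided $\rho$ and $\|g\|_{C^{2,\alpha}}$ are small.

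The key analytic input is quadratic control on the nonlinearity: estimates of the form
\begin{equation*}
\|Q_{\mathcal{C}}(u)\|_{C^{0,\alpha}_{\delta-2}} \le C_1 \|u\|_{C^{2,\alpha}_\delta}^2,
\qquad
\|Q_{\mathcal{C}}(u_1) - Q_{\mathcal{C}}(u_2)\|_{C^{0,\alpha}_{\delta-2}} \le C_1 \bigl(\|u_1\|_{C^{2,\alpha}_\delta} + \|u_2\|_{C^{2,\alpha}_\delta}\bigr)\|u_1 - u_2\|_{C^{2,\alpha}_\delta},
\end{equation*}
valid for $\|u_i\|_{C^{2,\alpha}_\delta} \le 1$, say. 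Granting these, $\|\mathcal{T}(u)\|_{C^{2,\alpha}_\delta} \le C_0(C_1\rho^2 + \|g\|_{C^{2,\alpha}})$, which is $\le \rho$ once $\rho$ is small enough that $C_0 C_1 \rho \le \tfrac12$ and then $\|g\|_{C^{2,\alpha}} \le \rho/(2C_0)$; and $\|\mathcal{T}(u_1)-\mathcal{T}(u_2)\|_{C^{2,\alpha}_\delta}\le 2C_0 C_1 \rho\,\|u_1-u_2\|_{C^{2,\alpha}_\delta}\le \tfrac12\|u_1-u_2\|_{C^{2,\alpha}_\delta}$. The Banach fixed-point theorem then gives a unique $u \in \mathcal{B}_\rho$, and tracing through, $\|u\|_{C^{2,\alpha}_\delta}\le 2C_0\|g\|_{C^{2,\alpha}}$, which is the claimed estimate with $C(p,q,\delta,\alpha) = 2C_0$. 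Uniqueness in the full space (not just in $\mathcal{B}_\rho$) for small data follows from the contraction estimate applied to any two solutions with small norm.

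The main obstacle is establishing the quadratic estimates on $Q_{\mathcal{C}}$ in the weighted norms, and this is where the scale-invariance of $\mathcal{C}$ does the real work. The operator $\mathcal{M}_{\mathcal{C}}(u) = a_{\mathcal{C}}(x,u,\nabla u)^{ij}\nabla^2_{ij}u + b_{\mathcal{C}}(x,u,\nabla u)$ has smooth coefficients with $a_{\mathcal{C}}(x,0,0)^{ij}\nabla^2_{ij} = \Delta_{\mathcal{C}}$ and $b_{\mathcal{C}}(x,0,0) = 0$, $\partial_{(u,\nabla u)}b_{\mathcal{C}}(x,0,0)$ giving the zeroth-order term $\tfrac{n-1}{|x|^2}$; the point is that $Q_{\mathcal{C}}$ vanishes to second order in $(u,\nabla u)$. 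To get the weighted estimate one rescales: by the scaling relation \eqref{scaling2}, $(\mathcal{M}_{\mathcal{C}} u)_r = \tfrac1r \mathcal{M}_{\mathcal{C}}(\tfrac1r u_r)$ and similarly for $\mathcal{L}_{\mathcal{C}}$ via \eqref{scaling1}, so on each dyadic annulus $\mathcal{C}\cap(B_{2r}\setminus B_r)$ one works with the unit-scale function $r^{-1}u_r$ (or rather $r^{-\delta}u_r$ after incorporating the weight) on the fixed annulus $\mathcal{C}\cap(B_2\setminus B_1)$, where ordinary (unweighted) Schauder-type quadratic estimates for the smooth nonlinearity $Q_{\mathcal{C}}$ apply uniformly. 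Since $\delta > 1$, the functions $r^{-\delta}u_r$ and their derivatives are uniformly small on the model annulus when $\|u\|_{C^{2,\alpha}_\delta}$ is small, so the rescaled $(u,\nabla u)$ stays in a fixed small neighborhood of the origin where the smooth coefficient functions are controlled, and summing (taking the supremum over $r\le\tfrac12$) recovers the weighted bound with the correct power $\delta - 2$. I would carry this out by: (i) recording the structure of $\mathcal{M}_{\mathcal{C}}$ and the vanishing order of $Q_{\mathcal{C}}$; (ii) proving the rescaled quadratic estimates on the model annulus by Taylor expansion of the coefficients plus the algebra property of $C^{k,\alpha}$ on a bounded domain; (iii) reassembling via the definition of the weighted norm and the scaling identities; and (iv) running the contraction mapping argument above.
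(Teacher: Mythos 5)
Your proposal takes essentially the same approach as the paper (and as the original CHS reference): split $\mathcal{M}_{\mathcal{C}} = \mathcal{L}_{\mathcal{C}} + Q_{\mathcal{C}}$, invert the linear part using Theorem~\ref{LinearC}, establish a quadratic Lipschitz bound on $Q_{\mathcal{C}}$ in weighted H\"older norms via the scaling identities \eqref{scaling1}--\eqref{scaling2}, and run Banach's fixed point theorem. The only cosmetic difference is that you contract on a ball $\mathcal{B}_\rho$ centered at the origin, whereas the paper (in the analogous argument for Theorem~\ref{1}) centers its invariant set $E_g$ at the Jacobi field $Hg$ with radius $\sim\|g\|^2$; both yield the same Schauder estimate. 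One small refinement worth noting: the sharp form of the quadratic estimate (as in Lemma~\ref{Q bound lemma}) is $\|Q_{\mathcal{C}}(u)-Q_{\mathcal{C}}(v)\|_{C^{0,\alpha}_{\delta-2}} \leq C(\|u\|_{C^{2,\alpha}_1}+\|v\|_{C^{2,\alpha}_1})\|u-v\|_{C^{2,\alpha}_\delta}$, with the weight $1$ (not $\delta$) in the multiplicative factor; on $\mathcal{C}_1$ this implies your version since $\|\cdot\|_{C^{2,\alpha}_1}\leq\|\cdot\|_{C^{2,\alpha}_\delta}$ for $\delta>1$, but the distinction becomes essential in the paper's generalization to $\tilde S_\lambda$, where the comparison between weighted norms degenerates as $\lambda\to 0$.
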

 The proof uses a contraction mapping on the operator -"$\mathcal{L_{C}}^{-1}$"$\circ Q_{C}$ in an appropriate subset of $C^{0, \alpha}_{\delta-2}(\mathcal C_1)$. We are going to use similar methods in Theorem \ref{1}.

\subsection{Hardt-Simon Foliations}\label{sec:HSFoliations} 
Given the cone $\mathcal{C}$ and link $\Sigma$, notice that $\mathbb{R}^{n+1} \backslash \mathcal{C}$ divides $\mathbb R^{n+1}$ into two connected regions $E_{+}$, $E_-$, where 
\begin{align*}
E_+ &=\{(x, y)\in \mathbb R^{p+1} \times \mathbb R ^{q+1}: q|x|^2>p|y|^2 \},\\
E_- &=\{(x, y)\in \mathbb R^{p+1} \times \mathbb R ^{q+1}: q|x|^2<p|y|^2 \}.
\end{align*}
Choose an orientation on $\mathcal{C}$ such that the normal $\nu_{\mathcal C}(x,y)=(-y, x) $ points in $E_+$. Hardt-Simon showed that,

\begin{theorem}[\cite{SimonHardt}]
For an area-minimizing quadratic cone $\mathcal C=\mathcal C^{p,q}$, there exists a connected oriented smooth minimal hypersurface $S_+\subset E_+$. This surface is unique up to scaling, that is if $S\subset E_+$ is a connected smooth minimal hypersurface, then $S=\lambda S_+$ for some $\lambda>0$. 
\end{theorem}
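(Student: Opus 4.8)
\emph{Proof proposal.} The cone $\mathcal C=\mathcal C^{p,q}$ carries the isometric action of $SO(p+1)\times SO(q+1)$, and I would first construct $S_+$ among the hypersurfaces invariant under this group. Such a hypersurface is a ``bundle of round spheres'' $N_\gamma=\{(x,y)\in\mathbb R^{p+1}\times\mathbb R^{q+1}:(|x|,|y|)\in\gamma\}$ over a curve $\gamma$ in the closed first quadrant $\{s\ge 0,\ t\ge 0\}$, and computing the area element shows that the $n$-dimensional area of $N_\gamma$ is a dimensional constant times the weighted length $\int_\gamma s^{p}t^{q}\,d\ell$. Hence $N_\gamma$ is minimal exactly when $\gamma$ is a geodesic of the conformal metric $(s^{p}t^{q})^{2}(ds^{2}+dt^{2})$ --- a second-order ODE for $\gamma$ --- while $\mathcal C$ is the straight ray $L=\{\sqrt q\,s=\sqrt p\,t\}$ and $E_+$ corresponds to the open wedge $W$ bounded by $L$ and the positive $s$-axis. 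The construction of $S_+$ then proceeds by solving the geodesic ODE from the point $(1,0)$ with initial velocity perpendicular to the $s$-axis --- this is precisely the condition forcing $N_\gamma$ to close up \emph{smoothly} near $\{y=0\}$, where it is locally a graph over $S^{p}\times\mathbb R^{q+1}$ --- and proving that the solution curve $\gamma_*$ remains in $W$, is defined for all parameter values, and is forward-asymptotic to $L$. Set $S_+:=N_{\gamma_*}$; then $S_\lambda:=\lambda S_+$ is $N_{\gamma_\lambda}$ for the rescaled initial point $(\lambda,0)$, and ODE uniqueness makes the curves $\gamma_\lambda$ pairwise disjoint and monotone in $\lambda$, while the fact that they sweep out all of $W$ as $\lambda\in(0,\infty)$ yields that $\{S_\lambda\}_{\lambda>0}$ together with $\mathcal C$ foliates $\overline{E_+}$.

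\textbf{The crux} is the asymptotic claim: $\gamma_*$ must converge to $L$ rather than oscillate across it or drift back toward the $s$-axis. Linearizing the geodesic ODE at $L$ produces indicial exponents equal to the $\gamma_1^{\pm}$ of Table~\ref{table} --- equivalently, the characteristic roots of \eqref{homoDE} with $i=1$, coming from the constant eigenfunction $\phi_1$ --- and since $\mathcal C^{p,q}$ is area-minimizing we have $p+q\ge 6$, hence $n\ge 7$, hence $(n-2)^{2}-4(n-1)=n^{2}-8n+8\ge 0$, so $\gamma_1^{\pm}$ are real. Real exponents force monotone, non-oscillatory decay onto $L$, whereas in the excluded low dimensions $\gamma_1^{\pm}$ become complex and the equivariant surfaces spiral across the cone infinitely often. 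I would therefore prove a trapping lemma: after a logarithmic change of variables the problem becomes an autonomous planar system with $L$ an equilibrium of saddle/node type, and a phase-plane argument --- or explicit sub- and super-solutions modeled on $r^{\gamma_1^{\pm}}$ --- shows $\gamma_*$ enters a forward-invariant neighbourhood of $L$ and is squeezed onto it. This is the step that genuinely uses area-minimality of $\mathcal C$.

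For uniqueness I would slide the foliation $\{S_\lambda\}$ against an arbitrary connected, properly embedded, smooth minimal hypersurface $S\subset E_+$. As $S\subset\bigcup_{\lambda>0}S_\lambda$ and the leaves are linearly ordered, one starts with a leaf $S_\lambda$ far ``below'' $S$ (small $\lambda$, where $S_\lambda$ is near the vertex and $S$ is bounded away from it) and increases $\lambda$ to $\lambda_0:=\inf\{\lambda:S_\lambda\cap S\ne\emptyset\}$. At $\lambda_0$ the leaf $S_{\lambda_0}$ touches $S$ and lies weakly to one side of it; both are smooth minimal hypersurfaces near the contact point, so the strong maximum principle gives $S_{\lambda_0}=S$ near that point, and connectedness upgrades this to $S=S_{\lambda_0}=\lambda_0 S_+$. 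The delicate point is that the contact must be attained at a \emph{finite} point, with $0<\lambda_0<\infty$ and genuine separation for $\lambda<\lambda_0$; this requires controlling an arbitrary minimal $S\subset E_+$ near infinity --- one shows its tangent cone at infinity is an area-minimizing cone lying in $\overline{E_+}$, hence equal to $\mathcal C$ (again using the barrier foliation), which confines $S$ between two fixed leaves $S_{\lambda_-},S_{\lambda_+}$ outside a compact set. Applying the maximum principle is then unproblematic since $S$ and the leaves are smooth. I expect the non-oscillation/trapping lemma in the existence half, and this asymptotic confinement in the uniqueness half, to be the two real obstacles; the equivariant reduction and the maximum-principle sliding are otherwise standard.
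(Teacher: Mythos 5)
The paper does not actually prove this theorem; it is stated as a cited result of Hardt--Simon \cite{SimonHardt} (with the remark that Wang has extended the existence part to general area-minimizing cones). So your proposal is being compared not with an argument in the paper but with Hardt--Simon's original one, and the two routes are genuinely different. Hardt--Simon's proof is geometric-measure-theoretic and works for \emph{any} regular area-minimizing cone: take the perimeter minimizer in $\overline{E_+}\cap B_1$ with prescribed boundary $\Sigma$, prove that it is smooth away from $0$ and does not pass through $0$ via the monotonicity formula, a blow-up/tangent-cone argument, and a maximum-principle barrier against $\mathcal C$ itself, and then obtain the foliation and the uniqueness by scaling and sliding minimizers. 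Your proposal is the equivariant/ODE reduction in the spirit of Bombieri--De Giorgi--Giusti: reduce to a geodesic of the conformal metric $s^{2p}t^{2q}(ds^2+dt^2)$ in the quadrant, shoot from the $s$-axis with perpendicular initial velocity, and trap the orbit onto the ray $L$ using the real indicial exponents $\gamma_1^{\pm}$. That route is more elementary and gives an explicit picture, but it has two limitations Hardt--Simon do not have: it applies only to the cohomogeneity-one cones $\mathcal C^{p,q}$, and it a priori produces a \emph{minimal}, not an area-\emph{minimizing}, $S_+$. Since the paper later uses that the $S_\lambda$ are area-minimizing, you would still need a separate step (e.g.\ that the foliation $\{S_\lambda\}_{\lambda\in\mathbb R}$ together with $\mathcal C$ calibrates each leaf) to recover that property.

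Two more concrete corrections. First, the trapping step uses only that $\gamma_1^{\pm}$ are real, i.e.\ stability of $\mathcal C^{p,q}$ (equivalently $n\geq 7$), not area-minimality: the stable but non-minimizing cone $\mathcal C^{1,5}$ admits the same equivariant $S_+$ by your construction. So ``this is the step that genuinely uses area-minimality'' misidentifies the hypothesis being used; area-minimality is only used through its consequence, stability. Second, the uniqueness half needs stronger hypotheses than ``connected smooth minimal hypersurface $S\subset E_+$.'' As stated it is false (a bounded open piece of $S_+$ is a counterexample), and the tangent-cone-at-infinity confinement you invoke requires $S$ to be complete, properly embedded, and either area-minimizing or to satisfy an a priori density/quadratic-area-growth bound so that a tangent cone at infinity exists and is a minimizing cone in $\overline{E_+}$. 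Once those global hypotheses are supplied the sliding/strong-maximum-principle argument is fine; the gap is in the framing of the uniqueness statement, not in the touching argument itself.
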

Hardt-Simon had proven this for regular area-minimizing cones. For more general area-minimizing cones, Wang {\cite{wang2022}} recently has proven the above theorem but without the uniqueness part.\\ 
Define,
$$ S_{\lambda}=\left\{\begin{array}{ccc}
      \lambda S_+ \: &\lambda>0, \\
      \mathcal{C} \: &\lambda=0, \\
      |\lambda| S_- \: &\lambda<0.
 \end{array} \right. $$

Then $S_\lambda\subset E_+$ for $\lambda >0$ are area-minimizing hypersurface, and infact are the only smooth minimizing hypersurfaces completely contained in $E_+.$ For $\lambda>0$, $S_{\lambda}$ forms a foliation of $E_+$, and this can be seen using the property that if $\zeta\in E_+$, the ray $\{s \zeta: s>0\}$ intersects $S_+$ in exactly one single point and transversely and that $(s \zeta)\cdot \nu_{S_+}>0 $, where $\mu_{S_+}$ is the normal vector on $S_+$.\\
This surface ``corresponds" to the Jacobi field with a growth rate of $\gamma_1^+$ from Table \ref{table}. Explicitly there is a $R_0>0$, such that $S_+\backslash B_{R_0}$ is a graph over $\mathcal{C}$ with the leading term $r^{\gamma_1^+}$. After a normalization, we can write 
$$ S_+\backslash B_{R_0}=\mbox{graph}_{\mathcal{C}}\{ r^{\gamma_1^+}+O(r^{\gamma_1^+ -\epsilon})\},$$
where we have normalized $S_+$ such that the coefficient of $r^{\gamma_1^+}$ is 1. By scaling, we can use this to see that, for $\lambda>0$
\begin{equation}\label{graphS_lambda}
S_{\lambda}\backslash B_{\lambda R_0}=\mbox{graph}_{\mathcal{C}}\{ \lambda^{(-\gamma_1^+ +1)} r^{\gamma_1^+}+O(r^{\gamma_1^+ -\epsilon})\}.
\end{equation}

In fact, one can prove better estimates, for example in the case of $\mathcal{C}^{3,3}$, (refer Davini \cite{Davini}, Sz{\'e}kelyhidi \cite{Gabor2020}) 
$$ S_+\backslash B_{R_0}=\mbox{graph}_{\mathcal{C}}\{ r^{-2}+br^{-3}+O(r^{-9})\}$$
The above results, along with Table \ref{table}, say that apart from Jacobi fields with a rate greater than 1 (for $i\geq 4$), we have the Hardt-Simon foliations, translations and rotations corresponding to $\gamma_1^+$, $\gamma_2^+=0$, and $\gamma_3^+=1$ respectively. For the quadratic cones, we have the following global result due to Simon-Solomon, 
\begin{theorem}[\cite{SimonSolomon}, \cite{Mazet_2017}]
Let $N$ be an area-minimising hypersurface in $\mathbb R^{n+1}$ and let $\mathcal{C}$ be a tangent cone at $\infty$ of $N$. Then up to translations, rotations, and rescalings $N=\mathcal{C}$, $S_+$, or $S_-$.   
\end{theorem}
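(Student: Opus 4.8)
\emph{Proof proposal.} The plan is to combine an asymptotic-graph description of $N$ near infinity with a sliding-barrier argument using the Hardt--Simon foliation $\{S_\lambda\}$.

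First I would establish the end structure. Since $\mathcal C$ has an isolated singularity and $N$ is area-minimizing with tangent cone $\mathcal C$ at infinity, Allard's regularity theorem together with the standard analysis of asymptotically conical minimal ends gives $R_0>0$ such that $N\setminus B_{R_0}$ is a smooth multiplicity-one graph $\mathrm{graph}_{\mathcal C}(u)$ over $\mathcal C\setminus B_{R_0}$ with $\|u\|_{C^1}\to 0$. Feeding this into the Fourier/ODE machinery of Section~\ref{CHS} --- decompose $u=\sum_i a_i(r)\phi_i(\omega)$, so that $\mathcal M_{\mathcal C}u=0$ reads $r^2 a_i''+(n-1)r a_i'-\mu_i a_i = Q_i$ with $Q_i$ built from $Q_{\mathcal C}(u)$, and recall that the decaying homogeneous rates at infinity are $r^{\gamma_i^-}$ for all $i$ and $r^{\gamma_1^+}$, the only $\gamma_i^+<0$ (the constant rate $\gamma_2^+=0$ being excluded since $u\to 0$) --- one obtains the bound $|u|\le C r^{\gamma_1^+}$ on $\mathcal C\setminus B_{R_0}$ and a trichotomy: either $u\equiv 0$; or $u$ has a fixed sign near infinity, its leading term a nonzero multiple of the constant eigenfunction $\phi_1$; or $u$ changes sign near infinity, its leading term a sign-changing $\phi_i$ with $i\ge 2$. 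If $u\equiv 0$ then $N=\mathcal C$ outside a compact set, hence $N=\mathcal C$ by unique continuation for minimal hypersurfaces (and no translation or rotation is needed, as the tangent cone at infinity is unaffected by translation and only repositioned by rotation).

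Assume $u\not\equiv 0$ and, fixing the orientation, that $N$ meets $E_+$ near infinity. I would then slide the leaves $S_\lambda$, $\lambda>0$. Since $S_+$ is properly embedded and $0\notin S_+$, we have $d_0:=\mathrm{dist}(0,S_+)>0$, so $S_\lambda\cap B_R=\emptyset$ for $\lambda>R/d_0$; and by \eqref{graphS_lambda}, as $1-\gamma_1^+>0$, for $\lambda$ large the graph height $\lambda^{1-\gamma_1^+}r^{\gamma_1^+}+O(r^{\gamma_1^+-\epsilon})$ of $S_\lambda$ dominates the bound $C r^{\gamma_1^+}$ on $|u|$; hence $S_\lambda\cap N=\emptyset$ for all large $\lambda$. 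Using that $\{S_\lambda\}_{\lambda>0}$ foliates $E_+$, put $\bar\lambda:=\sup\{\lambda(x):x\in N\cap E_+\}$, where $\lambda(x)$ is the parameter of the leaf through $x$; one checks $0<\bar\lambda<\infty$ ($\bar\lambda>0$ because $N\cap E_+\ne\emptyset$; $\bar\lambda<\infty$ from the height bound near infinity and properness on compact sets). If the supremum is attained at a finite $x_0$, then $N$ touches $S_{\bar\lambda}$ from one side there, and the strong maximum principle --- at a regular point of $N$, or the one-sided maximum principle for area-minimizing currents touched by a smooth minimal hypersurface (Ilmanen, Simon, Solomon--White), applicable since $\mathrm{sing}(N)$ has codimension $\ge 7$ --- forces $N=S_{\bar\lambda}$ near $x_0$, hence $N=S_{\bar\lambda}=\bar\lambda S_+$ by connectedness, i.e. $N=S_+$ up to rescaling. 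If instead the supremum is only approached at infinity, then $u$ and the graph function $u_{\bar\lambda}$ of $S_{\bar\lambda}$ share a leading term and $w:=u-u_{\bar\lambda}$ decays strictly faster; running the linearized indicial analysis of the previous step around $S_{\bar\lambda}$ and bootstrapping shows $w$ decays faster than every negative power of $r$, whence $w\equiv 0$ by a unique-continuation-at-infinity (Carleman/Agmon) estimate, and again $N=S_+$ up to rescaling. Finally the sign-changing case is impossible: the same sliding applied to $N\cap E_+$ (which $S_\lambda\subset E_+$ can meet only inside $E_+$) would give $N=S_{\bar\lambda}\subset E_+$, contradicting $N\cap E_-\ne\emptyset$; the mirror argument with $\{S_\lambda\}_{\lambda<0}$ handles the case where $N$ meets $E_-$ and yields $N=S_-$ up to rescaling. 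This exhausts the trichotomy.

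I expect the main obstacle to be the combination of the end regularity in the first step with the ``approached only at infinity'' branch of the sliding: quantifying the asymptotically conical convergence precisely enough to run the indicial-root bookkeeping in the presence of the quadratic error $Q_{\mathcal C}(u)$, and above all the unique continuation at infinity (a {\L}ojasiewicz--Simon or Carleman argument, as in Simon--Solomon) needed to pin $N$ down once it is merely asymptotic to a fixed leaf $S_{\bar\lambda}$. The other delicate point is the one-sided maximum principle at a singular contact point, which is exactly where the area-minimality of $N$, and not merely its minimality, enters.
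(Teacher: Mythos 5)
The paper does not prove this theorem: it is stated as a quoted background result from \cite{SimonSolomon} and \cite{Mazet_2017}, with no proof given. So there is no ``paper's own proof'' to compare against. That said, your sketch is a reasonable reconstruction of the known sliding-barrier route (closer in spirit to Mazet than to Simon--Solomon, whose argument runs through a {\L}ojasiewicz--Simon inequality), and the outline is basically sound, with two caveats worth flagging.

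First, your trichotomy on the end graph $u$ is miscast. After Allard regularity and the indicial analysis, the relevant dichotomy is not ``leading term is the constant $\phi_1$'' versus ``leading term is sign-changing.'' The decaying indicial rates include $\gamma_1^+$ and \emph{all} of the $\gamma_i^-$, and a faster-decaying leading term such as $r^{\gamma_1^-}\phi_1$ is still of fixed sign. The case your argument actually needs to isolate is ``leading term is a nonzero multiple of $r^{\gamma_1^+}\phi_1$'' versus ``the $r^{\gamma_1^+}$-coefficient vanishes''; in the latter case $\lambda|_N \to 0$ at infinity, so the extremal leaf parameter $\bar\lambda$ is attained at an interior point, the one-sided maximum principle forces $N = S_{\bar\lambda}$, and this contradicts the assumed faster decay --- no separate sign analysis is required.

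Second, the genuinely delicate branch is exactly the one you identify: when the extremal $\bar\lambda$ is attained only at infinity, i.e.\ when the $r^{\gamma_1^+}$-coefficient is the extremal value. Your proposal to bootstrap the indicial analysis around $S_{\bar\lambda}$ to get superpolynomial decay and then invoke Carleman-type unique continuation at infinity is one possible closing move, but it is not a routine estimate and you should not present it as such. A cleaner finish is available: the two-sided sliding (with $\{S_\lambda\}_{\lambda>0}$ from above and $\{S_\lambda\}_{\lambda\le 0}$ from below, using the one-sided maximum principle with the singular-touching refinement of Ilmanen/Solomon--White at $\mathcal C = S_0$) pins $N$ strictly inside $E_+$; one can then appeal directly to the Hardt--Simon uniqueness statement for minimizing hypersurfaces contained in $E_+$, which is already recorded in Section~\ref{sec:HSFoliations} of the paper, rather than re-deriving a unique-continuation-at-infinity lemma from scratch. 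Either way, this is the step where area-minimality (as opposed to mere stationarity) is essential, as you correctly note.
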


\section{Approximate surface}\label{setup}
In general, we would like to study the minimal surface equation over $S_\lambda$, but the boundary presents an issue. On $S_\lambda$, it is not easy to construct or solve for Jacobi fields and additionally if we scale solutions on $S_+$, we would end up scaling the boundary as well. So rather than working with graphs over $S_{\lambda}$, we will be working with an approximate surface that is close to it. Similar to \cite{CHS}, we will first solve the graphical minimal surface equation on this approximate surface. In this section, we shall define and study the approximate surface and setup the required notation. \\
The idea to construct the approximate surface $\tilde{S_{\lambda}}$ is that it looks like the Simons cone far away from the origin and be smooth like the $S_{\lambda}$ as we get closer to the origin. To construct the glued object $\tilde{S}_\lambda$, we will need the following cutoff functions, 
\begin{definition}\label{approxsurface}
Let $r_\lambda\coloneqq\lambda^{\frac{n-1}{n}}$, define the cutoff function, $\gamma_1: \mathbb{R}^+\rightarrow [0,1]$ smooth such that
$$ \gamma_1(r)=\left\{\begin{array}{cc}
      0 \mbox{ if }& r\geq 2r_{\lambda}  \\
      1 \mbox{ if }& r\leq r_{\lambda} 
 \end{array} \right. , $$
 and define $\gamma_2:\mathbb R^+\rightarrow [0, 1] $ where $\gamma_2\coloneqq 1-\gamma_1$.
\end{definition}
Recall from eq.(\ref{graphS_lambda}) we know that outside of a compact set, $S_{\lambda}$ is the graph of $r^{\gamma_1^+}+O(r^{\gamma_1^+ -\epsilon})$ over $\mathcal{C}.$

\begin{definition}[Approximate surface] \label{defn:approxsurface}
Define $\tilde S_\lambda$ in the three regions as follows, 
\begin{itemize}
    \item $2r_\lambda\leq r \leq 1 :$ $\tilde S_\lambda=\mathcal C_1$
    \item $r_\lambda \leq r \leq 2 r_\lambda$: $\tilde S_\lambda $ is the graph of  $\gamma_1(r)(\lambda^{(-\gamma_1^+ +1)} r^{\gamma_1^+}+O(r^{\gamma_1^+ -\epsilon})) $ over the annular region  $\mathcal C_1\cap \{r_{\lambda}<r<2r_{\lambda}\}$. 
    \item $0<r\leq r_\lambda:$ $\tilde S_\lambda= S_\lambda$

\end{itemize}
Refer to Figure \ref{approx surface}.
\end{definition}
This surface is well-defined for $|\lambda|\ll 1$. Observe that $\tilde S_\lambda \subset B_1$ is a smooth connected hypersurface and $\partial \tilde S_\lambda =\Sigma$. In simpler terms $\tilde S_\lambda$ can be thought of as ``$\gamma_1 S_{\lambda}+ \gamma_2 \mathcal{C}_1$".

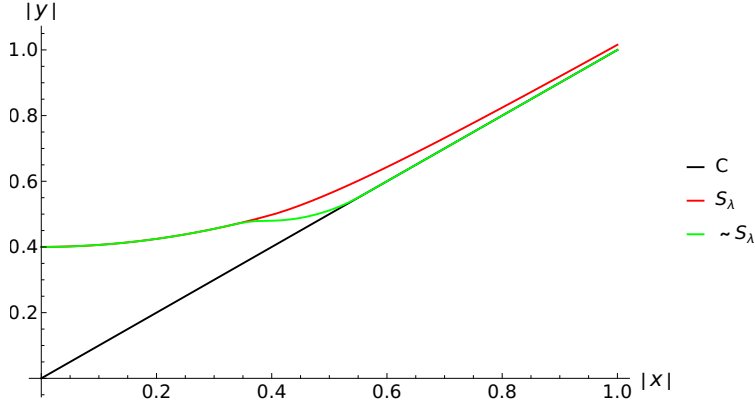
\begin{figure}
\centering
\begin{tikzpicture}
  \begin{axis}[
        axis lines = middle,
        xlabel = $|x| \in \mathbb R^{p+1}$,
        ylabel = {$|y|\in \mathbb R^{q+1} $},
       domain=0:4,
       ymin=0, ymax=4, 
      samples=200,
      xtick=\empty,
      ytick=\empty,
      width=12cm,
      height=8cm,
      legend pos= north east,
    ]
    \addplot[black, thick] {abs(x)};
    \addlegendentry{$\mathcal C^{p,q}$ }
    
    \addplot[blue, thick] {abs(x) + 1/(abs(x) + 1)};
    \addlegendentry{$S_+$}

    \addplot[green, thick] 
    {   (x < 0.8) * (x + 1/(abs(x) + 1)) +
        (x >= 0.8 && x < 2) * (x + ((2-x)/1.2)*(1/(abs(x) + 1))) +
        (x >= 2) * (x)
    };
    \addlegendentry{$\tilde S_\lambda$}

\end{axis}
\end{tikzpicture}
\caption{Approximate surface $\tilde S_\lambda$ (in green),\\
For $(x, y)\in \mathbb R^{p+1}\times\mathbb R^{q+1}$, the $x$-axis and $y$-axis represents $|x|$ and $|y|$.
respectively.}
\label{approx surface}
\end{figure} 

\subsection{Weighted spaces over $ \tilde{S_{\lambda}}$}
Similar to Section \ref{WeSpC}, in this subsection, we are going to define weighted H\"older norms on the approximate surface $\tilde{S_{\lambda}}$.

\begin{definition}[Weighted H\"older norm on $S_\lambda$]
    Let $u$ be a function on $S_{\lambda}$, and let $A_R=\{r: R<r<2R\}$ be an annulus, and $g_{euc}$ be the induced metric on $S_\lambda$, then the weighted H\"older norm on $S_\lambda$ is 
$$\|u\|_{C^{k, \alpha}_{\delta} (S_{\lambda})}= \sup_{R} R^{-\delta} \|u\|_{C^{k,\alpha} (S_{\lambda}\cap A_R, R^{-2}g_{euc})},$$
where the supremum is taken over $R$ such that the ball $B_{\frac{3R}{2}}(0)$ intersects $S_{\lambda}$.
\end{definition}
Using the above norm on $S_\lambda$ in combination with the weighted norm on $\mathcal C_1$, we can define a weighted H\"older norm on the approximate surface.
\begin{definition}[Weighted H\"older norm on $\tilde S_\lambda$]
Let $u$ be a function on $\tilde S_\lambda$, define 
$$\|u\|_{C^{k, \alpha}_{\delta} (\tilde{S}_{\lambda})}=\|\gamma_1 u\|_{C^{k, \alpha}_{\delta} (S_{\lambda})}+\|\gamma_2 u\|_{C^{k, \alpha}_{\delta} (\mathcal{C})}.$$
This is well-defined, let $u$ be a function on $\tilde S_\lambda$ then, $\gamma_1 u$ and $\gamma_2 u$ can be seen as functions on $S_{\lambda}$ and $\mathcal{C}$ respectively, and $C^{k, \alpha}_\delta(\tilde S_\lambda)\coloneqq \{u: \|u\|_{C^{k, \alpha}_{\delta} (\tilde S_\lambda)}< \infty\}$.
\end{definition}
Similar to weighted spaces over $\mathcal{C}$, on $\tilde{S}_{\lambda}$ we have, 
\begin{prop}
The weighted H\"older norm and space satisfy the following properties,
\begin{enumerate}
\item $C^{k, \alpha}_\delta(S_\lambda)$ and $C^{k, \alpha}_\delta(\tilde{S_\lambda})$ are Banach spaces.
\item If $f\in C^{k, \alpha}_{\delta}(\tilde{S_\lambda})$, then for fixed $0<r<\frac{1}{2}$,
$$\nabla^i (r^{-\delta}f(rx))=r^{-\delta+i}(\nabla^i f)(rx),$$
hence $\nabla^i f \in C^{k-i, \alpha}_{\delta-i}(\tilde{S_\lambda}) $ and,
$$\|\nabla^i f\|_{C^{k-i, \alpha}_{\delta-i}} \leq C \|f\|_{C^{k, \alpha}_{\delta}}.$$
\item Unlike $\mathcal C_1$, the weighted spaces on $\tilde{S}_{\lambda}$ are actually the same set of functions, only the norms differ and we have the following relations, for $\delta\leq \delta'$, 
$$ \|u\|_{C^{k, \alpha}_{\delta}(\tilde{S}_{\lambda})}\leq \|u\|_{C^{k, \alpha}_{\delta'}(\tilde{S}_{\lambda})}\leq C (\lambda)^{\delta-\delta'} \|u\|_{C^{k, \alpha}_{\delta}(\tilde{S}_{\lambda})},$$
where $C$ does not depend on $\lambda.$
\end{enumerate}
\end{prop}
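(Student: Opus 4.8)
The plan is to treat the three items in turn, reducing each to the corresponding statement over $\mathcal{C}_1$ from Section \ref{WeSpC} together with elementary bookkeeping about the two length scales $\lambda$ and $r_\lambda=\lambda^{(n-1)/n}$ built into $\tilde S_\lambda$.

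For (1), I would first check $\|\cdot\|_{C^{k,\alpha}_\delta(S_\lambda)}$ is a norm (homogeneity and the triangle inequality are immediate; vanishing of the norm forces $u\equiv 0$ on each rescaled annulus $S_\lambda\cap A_R$, hence on $S_\lambda$) and that it is complete by the standard template for weighted H\"older spaces: a Cauchy sequence, rescaled on each annulus by $R^{-2}g_{euc}$, is Cauchy in the complete space $C^{k,\alpha}(S_\lambda\cap A_R,R^{-2}g_{euc})$, the local limits patch to a function $u$, and the uniform weighted bound passes to the limit. For $\tilde S_\lambda$, if $u_m$ is Cauchy in $\|\cdot\|_{C^{k,\alpha}_\delta(\tilde S_\lambda)}$ then $\gamma_1 u_m$ and $\gamma_2 u_m$ are Cauchy in $C^{k,\alpha}_\delta(S_\lambda)$ and in $C^{k,\alpha}_\delta(\mathcal C)$ respectively, hence converge to some $v_1$ and $v_2$; since weighted convergence implies local uniform convergence and $\gamma_1+\gamma_2\equiv 1$, the $u_m$ converge locally uniformly to $u:=v_1+v_2$ with $\gamma_1 u=v_1$ and $\gamma_2 u=v_2$, so $u\in C^{k,\alpha}_\delta(\tilde S_\lambda)$ and $u_m\to u$.

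For (2), the displayed identity is the chain rule: over the cone part it is exactly the scaling identity from the $\mathcal{C}_1$ proposition, and over $S_\lambda$ it is the same computation in the rescaled-annulus picture, since $R^{-2}g_{euc}$ is arranged so that commuting $\nabla^i$ past the rescaling costs precisely $i$ units of weight. For the estimate $\|\nabla^i f\|_{C^{k-i,\alpha}_{\delta-i}(\tilde S_\lambda)}\le C\|f\|_{C^{k,\alpha}_\delta(\tilde S_\lambda)}$ I would expand $\gamma_a\nabla^i f=\nabla^i(\gamma_a f)-\sum_{j\ge 1}\binom{i}{j}\nabla^j\gamma_a\,\nabla^{i-j}f$ for $a=1,2$ and use that $\gamma_a$ is bounded in $C^{k,\alpha}$ on each rescaled annulus uniformly in $R$ and $\lambda$ — its $j$-th derivatives are supported where $r\sim r_\lambda$, and there the rescaling by $R\sim r_\lambda$ cancels the $r_\lambda^{-j}$ size of $\partial_r^j\gamma_a$ — so multiplication by $\nabla^j\gamma_a$ sends $C^{k-i,\alpha}_{\delta-i+j}\to C^{k-i,\alpha}_{\delta-i}$ with $\lambda$-independent norm; the claim follows from the $\mathcal{C}_1$ and $S_\lambda$ versions of the identity applied to $\gamma_a f$.

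For (3) the key geometric input is that $\operatorname{supp}(\gamma_1 u)\subset S_\lambda\cap\{c\lambda\le r\le 2r_\lambda\}$ — the lower bound because $S_+$ has positive distance $d_0>0$ from the origin, so $S_\lambda=\lambda S_+$ reaches no closer than $\lambda d_0$ — and $\operatorname{supp}(\gamma_2 u)\subset\mathcal C\cap\{r_\lambda\le r\le 1\}$. Hence in both the $S_\lambda$- and the $\mathcal{C}_1$-norms of the relevant pieces the scale parameter $R$ (resp. $\rho$) runs only over $[c\lambda,1]$. Writing $R^{-\delta'}=R^{\,\delta-\delta'}R^{-\delta}$ with $\delta-\delta'\le 0$: from $R\le 1$ we get $R^{\delta-\delta'}\ge 1$, which gives the left inequality; from $R\ge c\lambda$ we get $R^{\delta-\delta'}\le C\lambda^{\delta-\delta'}$ (the cone piece contributes only the smaller factor $r_\lambda^{\delta-\delta'}=\lambda^{\frac{n-1}{n}(\delta-\delta')}\le\lambda^{\delta-\delta'}$ for $\lambda<1$), which gives the right inequality after summing the two pieces; and finiteness in one weighted norm now forces finiteness in all of them, so the underlying sets coincide. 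The only place the specific exponent $r_\lambda=\lambda^{(n-1)/n}$ and the geometry of $S_+$ enter, and the only step needing real care, is this support/scale bookkeeping in (3); I would keep the $\lambda$-dependence of every constant explicit to be sure the final bound has the stated form with $C$ independent of $\lambda$.
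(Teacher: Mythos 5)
The paper states this proposition without proof, so there is no paper argument to compare against; your verification is correct and is the natural one. The substantive point is item (3), and you identify the right mechanism: the scales $R$ appearing in $\|\gamma_1 u\|_{C^{k,\alpha}_\delta(S_\lambda)}$ are bounded below by a multiple of $\lambda$ (since $S_\lambda=\lambda S_+$ stays a distance $\gtrsim\lambda$ from the origin), the scales in $\|\gamma_2 u\|_{C^{k,\alpha}_\delta(\mathcal C)}$ are bounded below by a multiple of $r_\lambda\ge\lambda$, and writing $R^{-\delta'}=R^{\delta-\delta'}R^{-\delta}$ with $R\in[c\lambda,1]$ and $\delta-\delta'\le 0$ yields both inequalities with $C$ independent of $\lambda$, whence the two norms are equivalent and the sets coincide. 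Your treatment of (1) and (2) — reducing to the $S_\lambda$ and $\mathcal C$ pieces through the cutoffs, with the Leibniz commutator terms in (2) handled by noting that $\nabla^j\gamma_a$ carries weight $-j$ uniformly in $\lambda$ because its $r_\lambda^{-j}$ size is cancelled by the $R^j\sim r_\lambda^j$ rescaling on the gluing annulus — is likewise correct.
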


\subsection{Fredholm Theory over Asymptotically Conical surfaces}
We are going to use results on asymptotically conical operators from Marshall \cite[Chapter 4]{Marshall} to study the operator, $\mathcal{L}_1:C^{2, \alpha}_{\delta}(S_+)\rightarrow C^{0, \alpha}_{\delta-2}(S_+)$. Which in turn will help us understand the operator $\tilde{\mathcal{L}_\lambda}$. Let $f$ be a function on $S_+$, define
$$\|f\|_{L^p_{k, \delta}}=\left(\sum_{j=0}^k \int_{S_+} r^{-n}|r^{j-\delta} \nabla^j f|^p dV\right)^{\frac{1}{p}},$$
and 
$$L^p_{k, \delta}(S_+)=\left\{ f: \left(\sum_{j=0}^k \int_{S_+} r^{-n}|r^{j-\delta} \nabla^j f|^p dV\right)<\infty \right\}.$$
For $p>1$, $L^p_{k, \delta}(S_+)$ is a Banach space and is reflexive. Moreover, $L^2_{k, \delta}(S_+)$ is a Hilbert space with the following inner product, 
$$\langle f, g\rangle\coloneqq \sum_{j=0}^k \int_{S_+} r^{2(j-\delta-n)}\langle\nabla^j f, \nabla^j g \rangle dV.$$
We have the following $L_2$ inner product identification, 
\begin{align}
L^p_{k, \delta}(S_+)\times L^{p^{\prime}}_{k, -\delta-n}(S_+) &\rightarrow \mathbb R, \\
(f, g) &\mapsto \int_{S_+} fg.
\end{align}
and induce the following dual identification
$$\Phi: L^p_{0, \delta}(S_+)\rightarrow L^{p^{\prime}}_{0, -\delta-n}(S_+)^*,$$
where 
$$\Phi(f)(g)\coloneqq\int_{S_+} fg.$$

\begin{theorem}
    (Embedding theorem)
    For $\delta< \delta^{\prime}$, $l-\frac{n}{p}>k+\alpha$ there are continuous embeddings, $$L^p_{l,\delta}(S_+)\subseteq C^{k, \alpha}_{\delta}(S_+)\subseteq L^q_{k,\delta^{\prime}}(S_+). $$
    
\end{theorem}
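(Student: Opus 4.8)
The plan is to reduce both inclusions to the classical Euclidean Sobolev embeddings on a fixed annular domain via a dyadic rescaling, using that $S_+$ is asymptotically conical. Decompose $S_+$ into the dyadic annular pieces $S_+\cap A_R$ with $A_R=\{R<r<2R\}$, and on each piece rescale the induced metric to $\tilde g_R:=R^{-2}g_{euc}$. Since $S_+$ is bounded away from $0$ and is asymptotic to the cone $\mathcal C$ with a definite polynomial rate (cf. eq.(\ref{graphS_lambda}) and Table \ref{table}), the rescaled pieces $(S_+\cap A_R,\tilde g_R)$ have uniformly bounded geometry and, as $R\to\infty$, converge smoothly to the fixed piece $\Sigma\times(1,2)$ of the cone; for the finitely many small $R$ with $B_{3R/2}(0)\cap S_+\neq\varnothing$, the region is a fixed compact manifold with boundary. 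Consequently the classical embeddings $W^{l,p}\hookrightarrow C^{k,\alpha}$ (valid because $l-\tfrac{n}{p}>k+\alpha$, $n=\dim S_+$) and $C^{k,\alpha}\hookrightarrow L^q$ hold on each $(S_+\cap A_R,\tilde g_R)$ with a constant independent of $R$.

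For the first inclusion, recall that by definition $\|u\|_{C^{k,\alpha}_\delta(S_+)}=\sup_R R^{-\delta}\|u\|_{C^{k,\alpha}(S_+\cap A_R,\tilde g_R)}$, and record the scaling identities under $\tilde g_R=R^{-2}g_{euc}$: $dV_{\tilde g_R}=R^{-n}dV_{g_{euc}}$ and $|\nabla^j_{\tilde g_R}u|_{\tilde g_R}=R^j|\nabla^j u|_{g_{euc}}$, together with $r\simeq R$ on $A_R$. These give $R^{-\delta}\|u\|_{W^{l,p}(S_+\cap A_R,\tilde g_R)}\simeq \|u\|_{L^p_{l,\delta}(S_+\cap A_R)}\le \|u\|_{L^p_{l,\delta}(S_+)}$, where the last step is just restriction of the global integral. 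Chaining this with the uniform Sobolev inequality on the rescaled piece yields $R^{-\delta}\|u\|_{C^{k,\alpha}(S_+\cap A_R,\tilde g_R)}\le C\|u\|_{L^p_{l,\delta}(S_+)}$, and taking the supremum over $R$ gives $\|u\|_{C^{k,\alpha}_\delta(S_+)}\le C\|u\|_{L^p_{l,\delta}(S_+)}$. (The strict inequality $l-\tfrac np>k+\alpha$ even buys a small gain $\delta\mapsto\delta-\varepsilon$, which we discard.)

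For the second inclusion, fix $u\in C^{k,\alpha}_\delta(S_+)$. On $S_+\cap A_R$ the definition of the weighted norm forces the pointwise bounds $|\nabla^j u|_{g_{euc}}\le C R^{\delta-j}\|u\|_{C^{k,\alpha}_\delta}$ for $j\le k$. Combining these with $r\simeq R$ and the volume growth $\mathrm{Vol}_{g_{euc}}(S_+\cap A_R)\le CR^n$ (again a consequence of asymptotic conicality) gives, for any $q\ge 1$,
\[
\|u\|_{L^q_{k,\delta'}(S_+\cap A_R)}^q=\sum_{j=0}^k\int_{S_+\cap A_R} r^{(j-\delta')q-n}|\nabla^j u|^q\,dV_{g_{euc}}\le C R^{(\delta-\delta')q}\|u\|_{C^{k,\alpha}_\delta}^q .
\]
Summing over the dyadic family $R=2^m$, $m\ge m_0$, that tiles $S_+$, the series $\sum_{m\ge m_0}2^{m(\delta-\delta')q}$ converges because $\delta<\delta'$, and only finitely many tiles lie below the threshold $m_0$; hence $\|u\|_{L^q_{k,\delta'}(S_+)}\le C\|u\|_{C^{k,\alpha}_\delta(S_+)}$.

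\textbf{Main obstacle.} The only non-routine ingredient — and the place where the geometry of $S_+$ really enters — is the uniformity of the Sobolev constants across all the rescaled annuli and the volume bound $\mathrm{Vol}(S_+\cap A_R)\le CR^n$. Both follow from $S_+$ being an asymptotically conical minimal hypersurface (bounded away from the origin, modeled on $\mathcal C$ at infinity with a definite decay rate), so that the rescaled pieces have uniformly controlled geometry and converge to a fixed cone annulus; granted this, the two embeddings reduce to the scaling bookkeeping above.
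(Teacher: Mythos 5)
The paper does not prove this theorem; it is quoted from Marshall's thesis (\cite{Marshall}, Ch.\,4) along with the Fredholm results, so there is no in-paper proof to compare against. Your proof is correct and is the standard one for weighted spaces on asymptotically conical manifolds: decompose into dyadic annuli, rescale to unit scale so that both inclusions become the classical $W^{l,p}\hookrightarrow C^{k,\alpha}$ and $C^{k,\alpha}\hookrightarrow L^q$ on a fixed domain with constants uniform in $R$, and observe that the $r^{-n}$ normalization in $\|\cdot\|_{L^p_{k,\delta}}$ is exactly what makes the rescaled Sobolev norm equal $R^{-\delta}$ times the weighted Hölder norm (first inclusion), while the gain $\delta<\delta'$ produces a geometric series $\sum_m 2^{m(\delta-\delta')q}$ that converges (second inclusion). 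The scaling bookkeeping ($dV_{\tilde g_R}=R^{-n}dV$, $|\nabla^j u|_{\tilde g_R}=R^j|\nabla^j u|$, $r\simeq R$ on $A_R$), the uniform-geometry input (polynomial decay of $S_+$ to $\mathcal C$ plus $S_+$ bounded away from the origin), and the volume bound $\mathrm{Vol}(S_+\cap A_R)\lesssim R^n$ are all the right ingredients and are used correctly. This is, in substance, the argument Marshall's reference gives.
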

Recall the Jacobi operator $\mathcal L_1: L^p_{k, \delta}(S_+) \rightarrow L^p_{k-2, \delta-2}(S_+)$ on the Hardt-Simon surface and recall $\mathcal L_1=\Delta+|A_{S_+}|^2. $ 
\begin{theorem}\cite{Marshall}
    For $\mathcal{L}_1: L^p_{k, \delta}(S_+)\rightarrow L^p_{k-2, \delta-2}(S_+)$, there exists a subset $\mathcal{D}(\mathcal{L}_1)\subset \mathbb R$ independent of $p$, $k$ such that $\mathcal{L}_1$ is Fredholm iff $\delta\in \mathbb R \backslash \mathcal{D}(\mathcal{L}_1)$. Also, the set $\mathcal{D}(\mathcal{L}_1)$ is countable and discrete. 
\end{theorem}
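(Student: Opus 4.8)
The statement asserts that the Jacobi operator $\mathcal L_1 = \Delta + |A_{S_+}|^2$ on the Hardt–Simon surface $S_+$, viewed as a map between the weighted spaces $L^p_{k,\delta}(S_+) \to L^p_{k-2,\delta-2}(S_+)$, is Fredholm precisely when $\delta$ avoids a countable, discrete exceptional set $\mathcal D(\mathcal L_1) \subset \mathbb R$ that does not depend on $p$ or $k$. The plan is to invoke the general Fredholm theory for asymptotically conical elliptic operators developed in Marshall \cite[Chapter 4]{Marshall}, so the real content is to verify that $\mathcal L_1$ on $S_+$ fits that framework and to identify the exceptional set with the indicial roots of the model operator at the conical end. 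First I would observe that $S_+$ is asymptotically conical: by eq.(\ref{graphS_lambda}) (with $\lambda = 1$), outside $B_{R_0}$ the surface $S_+$ is a graph over the cone $\mathcal C$ of a function decaying like $r^{\gamma_1^+} = o(1)$ together with its derivatives (with the improved estimates of Davini \cite{Davini} and Sz\'ekelyhidi \cite{Gabor2020} in the $\mathcal C^{3,3}$ case), so the induced metric on $S_+$ converges to the cone metric $g_{\mathcal C} = dr^2 + r^2 g_\Sigma$ at a polynomial rate, and $|A_{S_+}|^2 = \tfrac{n-1}{r^2} + O(r^{-2-\epsilon})$. Hence $\mathcal L_1$ is an asymptotically conical operator in Marshall's sense, with model operator at infinity equal to $\mathcal L_{\mathcal C} = \partial_r^2 + \tfrac{n-1}{r}\partial_r + \tfrac1{r^2}\mathcal L_\Sigma$ from eq.(\ref{L_C polar}).

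Next I would recall the abstract statement from \cite{Marshall}: for an asymptotically conical operator, there is a discrete set of \emph{critical rates} (the values $\delta$ for which the model operator admits a solution homogeneous of degree $\delta$ on the cone, i.e. the set of real parts of indicial roots), and away from these rates the operator between the corresponding weighted Sobolev spaces is Fredholm, with index locally constant in $\delta$ and independent of $p$ and $k$. Applying this verbatim with $\mathcal D(\mathcal L_1)$ defined to be this set of critical rates gives the theorem. Concretely, separating variables via the eigenbasis $\{\phi_i\}$ of $\mathcal L_\Sigma$ as in eq.(\ref{eigenvalue}), the model equation $\mathcal L_{\mathcal C}(r^\gamma \phi_i) = 0$ reduces to eq.(\ref{homoDE}), whose exponents are exactly $\gamma_i^\pm$ of eq.(\ref{gamma}); so $\mathcal D(\mathcal L_1) = \{\gamma_i^+ : i \geq 1\} \cup \{\gamma_i^- : i \geq 1\}$, which is countable, discrete, and manifestly independent of $p$ and $k$. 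The independence of the Fredholm locus from $p$ and $k$ is part of Marshall's package (it follows from elliptic regularity plus the density of smooth compactly supported functions), so nothing extra is needed there.

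The main obstacle — and the step I would spend the most care on — is verifying the hypotheses under which Marshall's theory applies: namely that $S_+$ really is asymptotically conical \emph{with a decay rate fast enough} that the lower-order perturbation of $\mathcal L_{\mathcal C}$ by $(\mathcal L_1 - \mathcal L_{\mathcal C})$ is genuinely subcritical (a $o(r^{-2})$ perturbation of the coefficients, or at least one not affecting the indicial roots), and that $S_+$ has only one end with no compact-boundary or cylindrical-end complications. This uses the asymptotic expansion from Hardt–Simon together with the sharper expansions of \cite{Davini, Gabor2020}; I would state the needed asymptotics as a lemma (graphicality of $S_+\setminus B_{R_0}$ over $\mathcal C$ with leading term $r^{\gamma_1^+}$ and controlled error in $C^{k,\alpha}_{\gamma_1^+ - \epsilon}$), check that this yields the required convergence of the metric and of $|A_{S_+}|^2$ to the conical model in the weighted sense Marshall requires, and then simply cite \cite[Chapter 4]{Marshall} for the Fredholm dichotomy and the discreteness of $\mathcal D(\mathcal L_1)$. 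A secondary point worth a sentence is that, since $S_+$ is smooth and complete with a single conical end, there is no contribution to $\mathcal D(\mathcal L_1)$ from a finite region, so the exceptional set is determined entirely by the asymptotic cone $\mathcal C$, which is why it coincides with the indicial set computed in Table \ref{table}.
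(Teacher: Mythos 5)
The paper does not prove this statement at all: it is quoted directly from Marshall \cite{Marshall}, and the surrounding discussion merely records the consequence that matters (that $\ker(\mathcal L_1^*)$ is trivial for the weights in use, so $\mathcal L_1$ is surjective). Your proposal is therefore not a competing argument to the paper's proof but a reconstruction of why the cited theorem applies here, and on those terms it is sound: you correctly recognize $S_+$ as asymptotically conical with model $\mathcal L_{\mathcal C}$ at the single conical end, identify $\mathcal D(\mathcal L_1)$ with the indicial roots $\{\gamma_i^\pm\}$ obtained by separating variables over the eigenbasis of $\mathcal L_\Sigma$, and note that discreteness, countability, and $p,k$-independence are part of the general Lockhart--McOwen/Marshall package. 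The one point worth flagging is that your explicit description $\mathcal D(\mathcal L_1)=\{\gamma_i^+\}\cup\{\gamma_i^-\}$ is the correct one, and it is actually in tension with the paper's remark that $\mathbb R_{>1}\subset\mathbb R\setminus\mathcal D(\mathcal L_1)$: since $\gamma_4^+>1$ is itself an indicial root, the paper's remark should really read $(1,\gamma_4^+)\subset\mathbb R\setminus\mathcal D(\mathcal L_1)$, which is the interval actually used in Theorem \ref{Surjective}. Your sketch is otherwise consistent with the cited source and with the way the paper deploys the result.
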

In our case, $\mathcal D(\mathcal L_1)$ can be explicitly computed, but we only need that $\mathbb R_{>1}\subset \mathbb R\backslash \mathcal D(\mathcal L_1)$.
Also, the kernel of the $\mathcal L_1: L^{p}_{k, \delta}(S_+)\rightarrow L^{p}_{k-2, \delta-2}(S_+)$  is independent of $k $ and hence we shall denote it by $\ker \mathcal (L_1)^p_\delta$ to denote the dependence on $p$ and $\delta$. 
\begin{theorem}\cite{Marshall}\label{ImgL1}
    Let $\mathcal{L}_1: L^p_{2, \delta}(S_+)\rightarrow L^p_{0, \delta-2}(S_+)$ be the bounded linear map as discussed above, and $\delta>0$ be such that $\delta \in \mathbb R \backslash \mathcal{D}(\mathcal L_1)$. Then
$$\mbox{Img}(\mathcal{L}_1)=(\ker(\mathcal{L}_1^*))^{\perp}=\{f\in  L^p_{k, \delta}(S_+):\langle f, h\rangle_{L_2}=0, \forall h \in \ker(\mathcal{L}_1^*)\}, $$
where $\mathcal L_1^*$ is the $L_2$ adjoint to $\mathcal L_1$.
\end{theorem}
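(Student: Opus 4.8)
The plan is to deduce this from the \emph{closed range theorem} for the Fredholm operator $\mathcal L_1$, together with the $L_2$-duality identification $\Phi$ recorded above; the content lies purely in matching the Banach-space adjoint of $\mathcal L_1$ with the formal differential adjoint under $\Phi$.

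First I would invoke the preceding theorem: since $\delta>0$ and $\delta\in\mathbb R\setminus\mathcal D(\mathcal L_1)$, the map $\mathcal L_1:L^p_{2,\delta}(S_+)\to L^p_{0,\delta-2}(S_+)$ is Fredholm, so in particular $\mathrm{Img}(\mathcal L_1)$ is a closed subspace of finite codimension. For any bounded operator $T:X\to Y$ between Banach spaces with closed range, the Hahn--Banach (bipolar) theorem gives $\mathrm{Img}(T)={}^{\perp}\!\big(\ker T^{\ast}\big):=\{y\in Y:\langle y,\varphi\rangle=0\text{ for all }\varphi\in\ker T^{\ast}\}$, using $\mathrm{Img}(T)^{\perp}=\ker T^{\ast}$ for the Banach adjoint $T^{\ast}:Y^{\ast}\to X^{\ast}$. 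Applying this with $T=\mathcal L_1$ reduces the proof to a concrete description of $\ker\mathcal L_1^{\ast}$.

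Next I would identify $\ker\mathcal L_1^{\ast}$ with the space of decaying Jacobi fields appearing in the statement. By the pairing identification recorded above, $L^p_{0,\delta-2}(S_+)^{\ast}\cong L^{p'}_{0,\,2-\delta-n}(S_+)$ via $h\mapsto\big(f\mapsto\int_{S_+}fh\big)$, the weights being arranged so that this is exactly the H\"older pairing against $r^{-n}\,dV$ and $\int_{S_+}fh$ converges absolutely. Then for $u\in C^{\infty}_c(S_+)$ --- a dense subset of $L^p_{2,\delta}(S_+)$ --- and $h$ in the dual space, integration by parts on the boundaryless manifold $S_+$, legitimate because $u$ is compactly supported, yields
\[
\int_{S_+}(\mathcal L_1 u)\,h=\int_{S_+}\big(\Delta u+|A_{S_+}|^2u\big)h=\int_{S_+}u\big(\Delta h+|A_{S_+}|^2h\big)=\int_{S_+}u\,(\mathcal L_1 h),
\]
the last equality holding in the distributional sense because $\mathcal L_1=\Delta+|A_{S_+}|^2$ is formally self-adjoint. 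Hence $h$ represents an element of $\ker\mathcal L_1^{\ast}$ exactly when $\mathcal L_1 h=0$ weakly; interior elliptic regularity for the smooth elliptic operator $\mathcal L_1$ then shows that any such $h$ is a smooth Jacobi field lying in the appropriate dual weighted space ($L^{p'}_{2,\,2-\delta-n}(S_+)$, heuristically). This is precisely the finite-dimensional space denoted $\ker(\mathcal L_1^{\ast})$. Combining with the first step, $f\in\mathrm{Img}(\mathcal L_1)$ iff $\langle f,h\rangle_{L_2}=\int_{S_+}fh=0$ for every $h\in\ker(\mathcal L_1^{\ast})$, which is the claimed equality $\mathrm{Img}(\mathcal L_1)=(\ker\mathcal L_1^{\ast})^{\perp}$.

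The one point needing care --- and the main (mild) obstacle --- is the weight bookkeeping in the second step: pinning down which weighted $L^{p'}$-space is dual to $L^p_{2,\delta}(S_+)$ under $\Phi$, so that $\ker(\mathcal L_1^{\ast})$ is the correct space, and checking that the integration-by-parts identity persists on the full domain (not just on $C^{\infty}_c$), i.e.\ that the boundary contributions at infinity vanish --- this is exactly where the decay encoded by the weighted norms is consumed. All of these facts are contained in Marshall's asymptotically-conical framework \cite{Marshall}; the rest is soft functional analysis.
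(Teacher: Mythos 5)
The paper gives no proof of this theorem---it is cited directly to Marshall's thesis, with the surrounding text only recording the $L^2$-duality identification $\Phi$ and the resulting adjoint $\mathcal L_1^*: L^{p'}_{0,-\delta-n+2}(S_+)\to L^{p'}_{2,-\delta-n}(S_+)$---so there is no internal argument to compare against. Your outline (Fredholmness $\Rightarrow$ closed range; the closed range theorem giving $\mathrm{Img}(\mathcal L_1)={}^{\perp}(\ker\mathcal L_1^*)$; the weighted duality $(L^p_{0,\delta-2})^*\cong L^{p'}_{0,\,2-\delta-n}$ under $\Phi$; and integration by parts plus interior elliptic regularity identifying the Banach adjoint with the formal operator $\Delta+|A_{S_+}|^2$) is the standard argument in the asymptotically conical setting and coincides exactly with what the paper's surrounding remarks presuppose. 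You also correctly isolate the one point where genuine analytic input is needed---density of $C^\infty_c(S_+)$ in the weighted domain, equivalently vanishing of the boundary terms at infinity in the integration by parts, which is where the weighted decay is consumed---and this is precisely the content supplied by Marshall's framework, the same deferral the paper itself makes.
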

All these results can now be proven for any bundle over a manifold with a conical end or an asymptotically conical metric. \\
In our case, note that if $\mathcal{L}_1:  L^p_{2, \delta}(S_+)\rightarrow L^p_{0, \delta-2}(S_+)$ is a linear operator, then the adjoint to $\mathcal{L}_1$ is defined as, $\mathcal{L}_1^*:L^{p^{\prime}}_{0, -\delta-n+2}(S_+)\rightarrow L^{p^{\prime}}_{2, -\delta-n}(S_+)$ where  it satisfies, 
$$\langle u,\mathcal{L}_1^*v\rangle_{L^2}=\langle \mathcal{L}_1 u,v\rangle_{L^2}.$$
Using the self-adjointness of the Laplacian operator, note that the Jacobi operator $\mathcal L_1$ will be self-adjoint. Also, if $n\geq 8$ and $\delta >1$, then $-n-\delta+2<\gamma_1^+$. 
 By Hardt-Simon's result, the only Jacobi fields with growth less than $\gamma_1^+$ are trivial. Hence, the $\ker (\mathcal L^*_1)= \ker (\mathcal L_1)^{p'}_{-\delta-n+2}$ is trivial. The above theorem, combined with the fact that the kernel is trivial, gives the following result.
 \begin{theorem}\label{Surjective}
 Let $1<\delta<\gamma_4^+ $, on $S_+$ the bounded linear operator $\mathcal{L}_1:C^{2, \alpha}_{\delta}(S_+) \rightarrow C^{0, \alpha}_{\delta-2}(S_+)$ is surjective. 
    
\end{theorem}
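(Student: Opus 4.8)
The plan is to bootstrap from the $L^p$ Fredholm theory for $\mathcal{L}_1$ on $S_+$ to a statement in the weighted Hölder spaces. First I would fix $1 < \delta < \gamma_4^+$ and a Hölder datum $f \in C^{0,\alpha}_{\delta-2}(S_+)$. Using the embedding theorem quoted above, I would choose $p$ large and an exponent $\delta'$ slightly larger than $\delta$ (still inside the window $(1,\gamma_4^+)$, and still avoiding the discrete set $\mathcal{D}(\mathcal{L}_1)$, which is possible since $\mathbb{R}_{>1}$ lies in the Fredholm range) so that $C^{0,\alpha}_{\delta-2}(S_+) \hookrightarrow L^p_{0,\delta'-2}(S_+)$; thus $f$ is an $L^p_{0,\delta'-2}$ function. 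By Theorem \ref{ImgL1}, the image of $\mathcal{L}_1 : L^p_{2,\delta'}(S_+) \to L^p_{0,\delta'-2}(S_+)$ is the $L^2$-annihilator of $\ker(\mathcal{L}_1^*)$; and since $n \geq 8$ and $\delta' > 1$ force $-n-\delta'+2 < \gamma_1^+$, Hardt--Simon's classification of Jacobi fields of sub-$\gamma_1^+$ growth shows this cokernel is trivial, so $\mathcal{L}_1$ is surjective onto $L^p_{0,\delta'-2}(S_+)$. Hence there is $u \in L^p_{2,\delta'}(S_+)$ with $\mathcal{L}_1 u = f$.

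The next step is elliptic regularity to upgrade $u$ from the Sobolev solution to a weighted Hölder solution with the correct decay rate $\delta$ (not merely $\delta'$). Interior Schauder estimates on dyadic annuli $A_R = \{R < r < 2R\}$, rescaled to unit size via $R^{-2}g_{euc}$ exactly as in the definition of $\|\cdot\|_{C^{k,\alpha}_\delta(S_+)}$, give $u \in C^{2,\alpha}_{loc}$ with scale-invariant bounds: on each annulus $\|u\|_{C^{2,\alpha}(A_R, R^{-2}g_{euc})} \lesssim \|u\|_{L^p(A_{R}', R^{-2}g_{euc})} + R^2\|f\|_{C^{0,\alpha}(A_R', R^{-2}g_{euc})}$ where $A_R'$ is a slightly enlarged annulus and the coefficients of $\mathcal{L}_1$ on $S_+$ are, after rescaling, close to those of $\mathcal{L}_{\mathcal C}$ uniformly in $R$ because $S_+$ is asymptotically conical. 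Combining with the weighted Sobolev bound on $u$ and the weighted Hölder bound on $f$ gives $u \in C^{2,\alpha}_{\delta'}(S_+)$ a priori. To sharpen the decay from $\delta'$ to $\delta$: since $\delta' > \delta$ and there are no indicial roots of $\mathcal{L}_1$ in $[\delta, \delta']$ (that interval avoids $\mathcal D(\mathcal L_1)$), the standard asymptotics argument on conical ends — expand $u$ near infinity in the $\mathcal{L}_\Sigma$-eigenbasis and observe that homogeneous contributions with rate in $[\delta,\delta']$ would have to be genuine Jacobi fields of that growth, which are excluded — forces $u$ itself to lie in $C^{2,\alpha}_\delta(S_+)$, or more directly one simply runs the $L^p$ solvability with $\delta$ in place of $\delta'$ and only uses $\delta'$ to place $f$ in the Sobolev scale, then notes $C^{2,\alpha}_{\delta'} \subset C^{2,\alpha}_\delta$ is \emph{false} on a conical end, so the cleanest route is to invoke the Fredholm statement directly at weight $\delta$: $\mathcal{L}_1 : L^p_{2,\delta}(S_+) \to L^p_{0,\delta-2}(S_+)$ is Fredholm with trivial cokernel (again $-n-\delta+2 < \gamma_1^+$), hence surjective, and $f \in C^{0,\alpha}_{\delta-2} \hookrightarrow L^q_{0,\delta''-2}$ for a \emph{smaller} $\delta'' $... this is precisely the subtlety I flag below.

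The remaining point, and the one to state carefully, is that surjectivity onto the Hölder target follows once one has: (i) $\mathcal{L}_1$ surjective at the Sobolev level for \emph{every} weight in $(1,\gamma_4^+)\setminus\mathcal D(\mathcal L_1)$ with trivial cokernel, and (ii) the rescaled Schauder estimate that converts a weighted-$L^p$ solution with a weighted-Hölder right-hand side into a weighted-Hölder solution of the \emph{same} weight. For (ii), the key is that the dyadic rescalings of $\mathcal{L}_1$ on $S_+$ converge to $\mathcal{L}_{\mathcal{C}}$ (in fact with rate $O(r^{-\epsilon})$ by eq.\eqref{graphS_lambda}), so uniform interior Schauder constants hold; then one feeds the weighted-$L^p$ bound on $u$ (from (i)) into the annuluswise estimate, sums/sups over $R$, and reads off $\|u\|_{C^{2,\alpha}_\delta(S_+)} \leq C\|f\|_{C^{0,\alpha}_{\delta-2}(S_+)}$. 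Surjectivity is then immediate: given $f \in C^{0,\alpha}_{\delta-2}(S_+)$, embed into $L^p_{0,\delta-2}$ with $p > n/\alpha$ large (so the embedding $L^p_{2,\delta} \hookrightarrow C^{1,\alpha'}_\delta$ and bootstrapping recover the Hölder regularity of the solution), solve at the Sobolev level, and promote.

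\textbf{Main obstacle.} The genuine difficulty is bookkeeping the weight exponents across the three function-space scales (weighted $C^{k,\alpha}$, weighted $L^p_{k,\delta}$, and the $L^2$-duality pairing with its shift $\delta \mapsto -\delta - n$) so that the cokernel stays trivial — this needs $-n-\delta+2 < \gamma_1^+$, which is exactly where $n \geq 8$ (equivalently the area-minimizing hypothesis on $\mathcal{C}$) and $\delta > 1$ enter — and, simultaneously, proving the uniform (in $R$) rescaled Schauder estimate on $S_+$, which hinges on the asymptotically conical structure from eq.\eqref{graphS_lambda}. Neither step is deep, but the exponent arithmetic must be done with care, and one must make sure the weight $\delta$ one solves at is the same $\delta$ appearing in the Hölder target rather than an auxiliary $\delta'$ introduced only to land $f$ in a Sobolev space.
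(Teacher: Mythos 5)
Your proposal follows the same route as the paper: invoke Marshall's $L^p$ Fredholm theory for $\mathcal L_1$ on the asymptotically conical manifold $S_+$, identify the cokernel with $\ker(\mathcal L_1^*)=\ker(\mathcal L_1)^{p'}_{-\delta-n+2}$, observe this is trivial because $-\delta-n+2<\gamma_1^+$ together with Hardt--Simon's classification of Jacobi fields of sub-$\gamma_1^+$ growth, and then transfer surjectivity from the weighted $L^p$ scale to the weighted Hölder scale. The paper's own proof stops immediately after the trivial-cokernel observation and leaves the $L^p\!\to C^{2,\alpha}_\delta$ transfer entirely implicit, so your annuluswise rescaled Schauder estimates together with the weight-improvement argument (no indicial roots of $\mathcal L_1$ in $[\delta,\delta']$, so the decay rate can be sharpened from the auxiliary $\delta'$ back down to $\delta$) supply exactly the details the paper elides, and you correctly flag that the Schauder step alone only reaches weight $\delta'$ and that the indicial-root argument is what closes the gap.
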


Let $J\subset C^{2, \alpha}_\delta (S_+)$ be the set of Jacobi fields on $S_\lambda$, i.e, for $\phi \in J$ then $\mathcal L_1(\phi)=0$.

\begin{prop}\label{InverserOnS_+}
There is a injective continuous linear map $\mathcal F:C^{0, \alpha}_{\delta-2}(S_+) \rightarrow J^\perp \subset C^{2, \alpha}_\delta(S_+)$ such that $\mathcal L_1 \circ\mathcal F= Id_{C^{0, \alpha}_{\delta-2}(S_+)} $
\end{prop}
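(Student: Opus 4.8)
The plan is to build $\mathcal{F}$ as a right inverse of $\mathcal{L}_1$ whose image is the $L^2$-orthogonal complement of the Jacobi fields. I would start from Theorem \ref{Surjective}, which already gives that $\mathcal{L}_1 : C^{2,\alpha}_\delta(S_+) \to C^{0,\alpha}_{\delta-2}(S_+)$ is surjective for $1 < \delta < \gamma_4^+$. The first step is to understand the kernel of this map: let $J = \ker(\mathcal{L}_1)$ inside $C^{2,\alpha}_\delta(S_+)$. By the Fredholm theory (the operator is Fredholm since $\delta \in \mathbb{R}\setminus\mathcal{D}(\mathcal{L}_1)$, as $\mathbb{R}_{>1} \subset \mathbb{R}\setminus\mathcal{D}(\mathcal{L}_1)$), $J$ is finite-dimensional; concretely it consists of the Jacobi fields on $S_+$ with growth rate controlled by $\delta$, which for $1 < \delta < \gamma_4^+$ are spanned by a finite explicit list (translations, rotations, the dilation field, and possibly the $\gamma_1^+$-mode, depending on the cone). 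Since $J$ is finite-dimensional, the $L^2$-orthogonal projection onto $J$ is well-defined and bounded on $C^{0,\alpha}$ and $C^{2,\alpha}$ weighted spaces, and $J^\perp \subset C^{2,\alpha}_\delta(S_+)$ (meaning the functions $L^2$-orthogonal to every element of $J$) is a closed complement of $J$.

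Next I would define $\mathcal{F}$ as follows: given $f \in C^{0,\alpha}_{\delta-2}(S_+)$, surjectivity produces some $u \in C^{2,\alpha}_\delta(S_+)$ with $\mathcal{L}_1 u = f$; then set $\mathcal{F}(f) := u - P_J u$ where $P_J$ is the $L^2$-orthogonal projection onto $J$. Since $\mathcal{L}_1(P_J u) = 0$, we still have $\mathcal{L}_1(\mathcal{F}(f)) = f$, and by construction $\mathcal{F}(f) \in J^\perp$. Well-definedness (independence of the choice of $u$) is immediate: any two preimages differ by an element of $J$, which is killed by $I - P_J$. Linearity is then clear from linearity of a chosen linear section (or one simply checks $\mathcal{F}(f_1 + f_2) - \mathcal{F}(f_1) - \mathcal{F}(f_2) \in J \cap J^\perp = \{0\}$). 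Injectivity: if $\mathcal{F}(f) = 0$ then $f = \mathcal{L}_1(\mathcal{F}(f)) = 0$.

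The remaining point is continuity of $\mathcal{F}$. Here I would use that $\mathcal{L}_1$ restricted to $J^\perp \cap C^{2,\alpha}_\delta(S_+)$ is a continuous linear bijection onto $C^{0,\alpha}_{\delta-2}(S_+)$ (surjective because $\mathcal{L}_1$ is surjective on all of $C^{2,\alpha}_\delta$ and $J^\perp$ is a complement of $J = \ker\mathcal{L}_1$; injective by definition of $J^\perp$), between Banach spaces, so by the open mapping theorem its inverse — which is exactly $\mathcal{F}$ — is bounded. One should double check that $J^\perp \cap C^{2,\alpha}_\delta(S_+)$ is genuinely a Banach space, i.e.\ closed in $C^{2,\alpha}_\delta(S_+)$; this follows since the finitely many conditions $\langle \cdot, \phi\rangle_{L^2} = 0$ for $\phi \in J$ are continuous on $C^{2,\alpha}_\delta(S_+)$ (the pairing makes sense because elements of $J$ decay fast enough — they are honest Jacobi fields on an asymptotically conical surface — so the integral converges against any $C^{2,\alpha}_\delta$ function with $\delta < \gamma_4^+$). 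The main obstacle, and the place to be careful, is precisely this last convergence/continuity issue: verifying that the $L^2$ pairing against Jacobi fields is a bounded functional on the weighted space so that $J^\perp$ is closed and the orthogonal projection $P_J$ is bounded in the $C^{k,\alpha}_\delta$ norms; once that is in place the open mapping theorem does the rest.
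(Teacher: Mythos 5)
Your proposal follows the same route as the paper: use surjectivity from Theorem \ref{Surjective} to get some preimage $u$, project it onto a complement of the finite-dimensional kernel $J$, and argue uniqueness and continuity. Your version is considerably more careful than the paper's two-line proof (which only asserts ``the choice is canonical and hence will be continuous''): you correctly invoke Fredholm theory for finite-dimensionality of $J$, you make the continuity argument explicit via the open mapping theorem, and you correctly identify that the whole construction hinges on $J^\perp$ being a genuine closed complement. Those are improvements in rigor over what the paper writes.

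The one place you go wrong is the parenthetical claim that ``elements of $J$ decay fast enough'' so that the unweighted $L^2$ pairing $\int_{S_+} u\phi$ converges for $u \in C^{2,\alpha}_\delta(S_+)$ and $\phi \in J$. It does not. The kernel $J$ at rate $1 < \delta < \gamma_4^+$ contains the rotation Jacobi fields $(Aw)\cdot\nu(w)$, which \emph{grow} like $r$ at infinity, and the translation fields $a\cdot\nu$, which are merely bounded; only the Hardt--Simon (dilation) mode decays. For $u$ with $|u| \lesssim r^\delta$ and $\phi$ a rotation field, $|u\phi| \lesssim r^{\delta+1}$, and against the volume of $S_+ \cap \{R < r < 2R\} \sim R^n$ the dyadic contributions scale like $R^{n+\delta+1}$, which diverges (the same happens for translations and even the decaying mode, since $\delta$ is only slightly above $1$ while $n \geq 7$). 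So the naive $L^2$-orthogonal projection $P_J$ is not defined on $C^{2,\alpha}_\delta(S_+)$. The fix is cheap: because $J$ is finite-dimensional one can either (i) take the orthogonal complement with respect to Marshall's weighted $L^2_{0,\delta'}$ inner product, whose weight $r^{-2\delta'-2n}$ restores convergence and into which the Sobolev embedding places $C^{2,\alpha}_\delta(S_+)$; or (ii) choose compactly supported $\psi_1,\dots,\psi_m$ dual to a basis of $J$ and set $J^\perp := \bigcap_i \ker\langle\cdot,\psi_i\rangle_{L^2}$; or (iii) use Hahn--Banach to produce any closed topological complement. With any of these in place, your open-mapping-theorem argument closes the proof exactly as you describe. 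The paper does not address this subtlety at all, so your instinct to flag it was the right one — you just need a correct resolution rather than the false decay claim.
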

\begin{proof}
From Theorem \ref{Surjective}, we have that for a given $f\in C^{0, \alpha}_{\delta-2}(S_+)$ there exists $u\in C^{2, \alpha}_\delta (S_+)$ such that 
$$\mathcal L_1 (u)=f.$$
Since we can write $u=u_1+\phi$ where $\phi\in J $ and $u_1\in J^\perp$. Suppose we have two $u_1$'s then the difference will be a Jacobi field and be in $J^\perp$ , and hence will be zero. As a consequence $u_1$ will be unique. The choice is canonical and hence will be continuous.  

\end{proof}

\section{Minimal surfaces near Hardt-Simon surfaces}
In Section \ref{CHS}, we stated Cafferali-Hardt-Simon's result where they solve a boundary value problem for the minimal surface equation over $\mathcal C_1$, except for a finite-dimensional set corresponding to the first three Fourier modes. Due to the results of Hardt-Simon, and Simon-Solomon at a linear level, we exactly know what these correspond to Hardt-Simon surfaces ($S_{\lambda}$), translations, and rotations of $\mathcal{C}_1$ (see Theorem \ref{eigenvector} and Section \ref{sec:HSFoliations}). The aim is to prove that for $\|g\|_{c^{k, \alpha}}$ small enough, we can find a hypersurface close to the $\mathcal{C}_1$ that agrees with graph$_{\Sigma}g$ on the boundary and that is graphical modulo translations, rotations and dilations.\\   
In this section, we prove a version of Theorem \ref{CHS} on $S_\lambda$ for $\lambda$ small. 
\begin{theorem}\label{1}

For $0<\lambda \ll 1$ and let $S_\lambda$ denote a leaf of the foliation on $\mathcal C^{p,q}$, let $\Sigma_\lambda\coloneqq S_\lambda \cap \partial B_1$ and $g\in C^{2, \alpha}(\Sigma_\lambda)$, then for $\|g\|_{C^{2, \alpha}}$ small and independent of $\lambda$,
    For $\lambda \ll 1$ let us denote $\Sigma_\lambda \coloneqq S_{\lambda}\cap \partial B_1$ and let $g\in C^{k, \alpha}(\Sigma)$ , then for $\|g\|_{c^{k, \alpha}}$ small (independent of $\lambda$), there exists $u\in C^{2, \alpha}(S_\lambda)$ such that
\begin{alignat*}{2}
       \mathcal{M}_\lambda u&=0&\quad &\text{on }S_{\lambda}\cap B_1\\
                \Pi u&=\Pi g &      &\text{on }\Sigma_\lambda,
\end{alignat*} 
recall $\Pi$ is the projection away from the finite-dimensional space as discussed in Section \ref{setup}.
\end{theorem}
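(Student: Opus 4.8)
The plan is to mirror the Caffarelli--Hardt--Simon strategy (Theorem \ref{CHSthm}) but transplanted onto the approximate surface $\tilde S_\lambda$, with all constants tracked uniformly in $\lambda$. First I would replace the boundary-value problem on $S_\lambda$ by the corresponding problem on $\tilde S_\lambda$: since $\tilde S_\lambda$ agrees with $S_\lambda$ near $\Sigma_\lambda=\Sigma$ (indeed $\tilde S_\lambda = \mathcal C_1$ for $2r_\lambda \le r \le 1$, so $\partial\tilde S_\lambda=\Sigma$ and the trace operator $\Pi$ makes sense verbatim), a solution $\tilde u\in C^{2,\alpha}_\delta(\tilde S_\lambda)$ of $\mathcal M_{\tilde S_\lambda}\tilde u=0$, $\Pi\tilde u=\Pi g$ can be transported back to a genuine graphical solution over $S_\lambda\cap B_1$ near the boundary, where the two surfaces coincide; deeper inside one uses that $\tilde S_\lambda$ and $S_\lambda$ differ only on $r_\lambda\le r\le 2r_\lambda$ by a graph of size $O(r_\lambda^{\gamma_1^+})=O(\lambda^{(\gamma_1^+-1)(n-1)/n})\to 0$, small in $C^{2,\alpha}_\delta$. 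So the real content is solving on $\tilde S_\lambda$.

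The core step is to build a right inverse for the linearization $\tilde{\mathcal L}_\lambda := \mathcal L_{\tilde S_\lambda}$ on $\tilde S_\lambda$, modulo the finite-dimensional obstruction $V$, with a bound $\|\tilde u\|_{C^{2,\alpha}_\delta(\tilde S_\lambda)}\le C(\|f\|_{C^{0,\alpha}_{\delta-2}(\tilde S_\lambda)}+\|\Pi g\|_{C^{2,\alpha}})$ where $C$ is independent of $\lambda$. The idea is to glue the two inverses already available: on the conical part use the Caffarelli--Hardt--Simon inverse "$\mathcal L_{\mathcal C}^{-1}$" from Theorem \ref{LinearC}, and on the compact smoothed core use the inverse $\mathcal F$ of $\mathcal L_1$ on $S_+$ from Proposition \ref{InverserOnS_+}, rescaled by $\lambda$ to land on $S_\lambda$ (the scaling identities \eqref{scaling1}, \eqref{scaling2} control how norms transform; crucially $r_\lambda=\lambda^{(n-1)/n}$ is chosen so that the overlap annulus $\{r_\lambda\le r\le 2r_\lambda\}$ sits in the regime where both $S_\lambda$ and $\mathcal C$ look alike, and the error terms from $|A_{\tilde S_\lambda}|^2 - |A_{\mathcal C}|^2$ and from cutting off the parametrix are small). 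Assemble a first approximate inverse $P$ by a partition of unity $\gamma_1,\gamma_2$, show $\tilde{\mathcal L}_\lambda\circ P = \mathrm{Id} + \mathcal E_\lambda$ with $\|\mathcal E_\lambda\|<1/2$ for $\lambda$ small enough, and correct by Neumann series. One must check the obstruction space $V$ (the first three Fourier modes on $\Sigma$, i.e. the constant/translation/rotation Jacobi fields of $\mathcal C$) persists coherently on $\tilde S_\lambda$; this is exactly where $\Pi$ enters, and where the uniformity of $\delta\in(1,\gamma_4^+)$ keeps $1$ and $0$ strictly off the relevant indicial roots so the model inverses are bounded uniformly.

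Once the linear theory on $\tilde S_\lambda$ is uniform in $\lambda$, the nonlinear step is a contraction mapping exactly as in the proof of Theorem \ref{CHSthm}: write $\mathcal M_{\tilde S_\lambda} = \tilde{\mathcal L}_\lambda + Q_{\tilde S_\lambda}$, first solve $\tilde{\mathcal L}_\lambda u_0 = 0$, $\Pi u_0 = \Pi g$ (so $\|u_0\|_{C^{2,\alpha}_\delta}\le C\|\Pi g\|_{C^{2,\alpha}}$), then look for $u=u_0+v$ with $v$ a fixed point of $v\mapsto -P\big(Q_{\tilde S_\lambda}(u_0+v)\big)$ on a small ball in the $\Pi$-kernel; the quadratic estimates $\|Q_{\tilde S_\lambda}(u)-Q_{\tilde S_\lambda}(u')\|_{C^{0,\alpha}_{\delta-2}}\le C(\|u\|_{C^{2,\alpha}_\delta}+\|u'\|_{C^{2,\alpha}_\delta})\|u-u'\|_{C^{2,\alpha}_\delta}$ hold with $\lambda$-independent $C$ because $\tilde S_\lambda\to\mathcal C$ in $C^\infty_{loc}$ away from $0$ and equals a fixed rescaling of $S_+$ near $0$. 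The main obstacle I expect is precisely the $\lambda$-uniformity of the glued linear inverse: one has to verify that the gluing/cutoff error $\mathcal E_\lambda$ is small in the weighted norm \emph{uniformly}, which forces the careful choice $r_\lambda=\lambda^{(n-1)/n}$ and a delicate estimate of the interaction term on the overlap annulus — all the geometry of the Hardt--Simon surface's decay rate $\gamma_1^+$ is used there. The rest (transporting back to $S_\lambda$, and the nonlinear iteration) is then routine given the excerpt's earlier results.
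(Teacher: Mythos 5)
Your overall strategy matches the paper's exactly: work on the approximate surface $\tilde S_\lambda$, glue the CHS inverse on the conical part with the Fredholm inverse $\mathcal F$ on $S_+$ (rescaled by $\lambda$), perturb by Neumann series, then run a contraction for the nonlinear term, and finally transport back to $S_\lambda$. However, there is a concrete technical gap in how you build the approximate inverse $P$, and it is precisely the point you yourself flag as ``the main obstacle.''

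You propose assembling $P$ with the partition of unity $\gamma_1,\gamma_2$ supported on the sharp annulus $[r_\lambda, 2r_\lambda]$ and then appealing to ``the careful choice $r_\lambda=\lambda^{(n-1)/n}$'' to make $\mathcal E_\lambda$ small. That does not work: the commutator error is controlled by $\|\nabla\gamma_i\|_{C^{0,\alpha}_{-1}}$, and since $\gamma_i$ transitions over a multiplicative factor of $2$, one has $r|\nabla\gamma_i|\sim 1$ uniformly on the annulus, so $\mathcal E_\lambda = O(1)$ and the Neumann series does not converge. The choice of $r_\lambda$ fixes \emph{where} the gluing happens (so that $S_\lambda$ is graphical over $\mathcal C$ there), not \emph{how fast} the cutoff error decays. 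The paper's fix is to introduce a second pair of cutoffs $\beta_1,\beta_2$ that are equal to $1$ on $\mathrm{supp}\,\gamma_i$ but transition across the much wider annulus $[\lambda^{a_+},\lambda^{a_-}]$ on a \emph{logarithmic} scale, so that $\|\nabla\beta_i\|_{C^{1,\alpha}_{-1}}\lesssim 1/|\log\lambda|\to 0$; the approximate inverse is then $P_\lambda(f,\Pi g)=\beta_1\lambda^2 u_1\circ\Lambda^{-1}+\beta_2 u_{2,g}$ where $u_1,u_{2,g}$ solve the model problems with right-hand sides $\gamma_i f$. With this modification the error is $o(1)$ as $\lambda\to 0$, uniformly in $f$ and $g$. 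A secondary imprecision: you mention controlling ``$|A_{\tilde S_\lambda}|^2-|A_{\mathcal C}|^2$'' in the overlap; in the paper's scheme the two model inverses are applied directly to $\gamma_i f$ on the exact model geometries, so the error is entirely the commutator of $\tilde{\mathcal L}_\lambda$ with the cutoff $\beta_i$, not a metric comparison term.
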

\textbf{Note}: The projection operator $\Pi $ was initially defined $\Sigma$ and not on $\Sigma_\lambda $. However, since $\Sigma_\lambda$ is a constant graphical perturbation of $\Sigma$ (ref Section \ref{sec:HSFoliations}).  For a small $\lambda$ this will be an identification between $\Sigma$ and $\Sigma_\lambda$. So we extend $\Pi$ to $\Sigma_\lambda$ through this identification. We shall abuse this identification throughout this section.  \\

Through this section we are going to assume $\lambda>0 $, and for $\lambda<0$ the result will follow similarly. For $\lambda=0$ it is just Theorem \ref{CHSthm}. The strategy to solve will involve working over the approximate surface $\tilde{S_{\lambda}}$ instead of $S_{\lambda}$. Similar to \cite{CHS} we will solve the linear problem first and then use the linear version to solve the quasi linear equation over $\tilde{S}_{\lambda}$, which is intuitively close to $S_{\lambda}$. Working with $\tilde{S_{\lambda}}$ rather than $S_{\lambda}$  is to ensure that we have control over the boundary independent of the scaling $\lambda$. We shall use this theorem to prove the existence result by which we construct minimal surfaces near quadratic cones (Theorem \ref{MainTheo}).

\subsection{The Linear Equation}\label{Linear}
Let the mean curvature operator over $\tilde{S_{\lambda}}$ be denoted as $\tilde{\mathcal{M}_{\lambda}}$. Let $\tilde{\mathcal{L}_{\lambda}}:C^{2, \alpha}_{\delta}(\tilde{S_{\lambda}})\rightarrow C^{0, \alpha}_{\delta-2}(\tilde{S_{\lambda}})$ be the linearization of $\tilde{\mathcal{M}}_\lambda$ at zero (similar to Section \ref{sec:prelim}). We are first going to prove the following linearized result,
\begin{theorem}\label{Lineareqn}
Let $f\in C^{0, \alpha}_{\delta-2}(\tilde{S_{\lambda}}), g\in C^{2, \alpha}(\Sigma)$ and $1<\delta<\gamma_4^+$, then there exists $u\in C^{2,\alpha}_{\delta}(\tilde S_\lambda)$ such that it solves,   
\begin{alignat*}{2}
       \tilde{\mathcal{L}_{\lambda}}u&=f &\quad &\text{on }\tilde{S_{\lambda}},\\
                \Pi u&=\Pi g &      &\text{on } \Sigma
\end{alignat*} 
and $u$ satisfies the following weighted Schauder boundary estimate
\begin{equation}\label{bdryschauder}
    \|u\|_{C^{2, \alpha}_{\delta}}\leq C(\|f\|_{C^{0, \alpha}_{\delta-2}}+\|\Pi g\|_{C^{2, \alpha}}).
\end{equation}
\end{theorem}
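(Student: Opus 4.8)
The plan is to solve the linear boundary value problem on $\tilde S_\lambda$ by gluing together the solution operators on the two model pieces — the cone $\mathcal C_1$ (Theorem \ref{LinearC}) and the Hardt-Simon surface $S_+$ (Proposition \ref{InverserOnS_+}) — and then correcting the error introduced by the cutoff. First I would fix cutoff functions adapted to the scale $r_\lambda = \lambda^{(n-1)/n}$, say $\chi_{\mathrm{in}}$ supported in $\{r \le 2r_\lambda\}$ and $\chi_{\mathrm{out}}$ supported in $\{r \ge r_\lambda\}$ with $\chi_{\mathrm{in}}+\chi_{\mathrm{out}} \equiv 1$ on $\tilde S_\lambda$ (these are essentially $\gamma_1,\gamma_2$ of Definition \ref{approxsurface}, possibly rescaled slightly so their supports overlap). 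Given $f \in C^{0,\alpha}_{\delta-2}(\tilde S_\lambda)$ and $g$, I would rescale the inner piece $\chi_{\mathrm{in}} f$ by $\lambda^{-1}$ to land on $S_+ = \lambda^{-1}S_\lambda$ (using the scaling relation \eqref{scaling1} for $\mathcal L$), apply $\mathcal F$ from Proposition \ref{InverserOnS_+} to get $u_{\mathrm{in}}$ on $S_+$, and scale back; for the outer piece I would apply the solution operator of Theorem \ref{LinearC} on $\mathcal C_1$ to $\chi_{\mathrm{out}} f$ with boundary data $\Pi g$, getting $u_{\mathrm{out}}$. The approximate solution is $u_0 = \chi_{\mathrm{in}} u_{\mathrm{in}} + \chi_{\mathrm{out}} u_{\mathrm{out}}$, which satisfies $\tilde{\mathcal L}_\lambda u_0 = f + E$ where the error $E$ is supported in the gluing annulus $\{r_\lambda \le r \le 2r_\lambda\}$ and comes from three sources: the commutator of $\tilde{\mathcal L}_\lambda$ with the cutoffs, the discrepancy between $\tilde{\mathcal L}_\lambda$ and the genuine operators $\mathcal L_1$, $\mathcal L_{\mathcal C}$ on their respective pieces (since $\tilde S_\lambda$ is only an \emph{approximate} surface there), and the mismatch between $u_{\mathrm{in}}$ and $u_{\mathrm{out}}$ on the overlap.

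The main work is to show this error is small in $C^{0,\alpha}_{\delta-2}(\tilde S_\lambda)$ with a bound of the form $\|E\| \le C(\lambda)\big(\|f\|_{C^{0,\alpha}_{\delta-2}} + \|\Pi g\|_{C^{2,\alpha}}\big)$ where $C(\lambda) \to 0$ as $\lambda \to 0$ — this is where the choice $r_\lambda = \lambda^{(n-1)/n}$ is used. The commutator terms each cost a derivative of a cutoff, i.e. a factor $r_\lambda^{-1}$ or $r_\lambda^{-2}$, but they act on functions (like $u_{\mathrm{in}}$ restricted to where it is already well-approximated by a Jacobi field / decaying term, or on $u_{\mathrm{out}}$) that are correspondingly small on the annulus because of the weighted norm and the known asymptotics \eqref{graphS_lambda}: on $\{r \approx r_\lambda\}$ the outer solution and inner solution agree to leading order up to a power of $r_\lambda$, and weighing by $r^{-\delta+2}$ against the $r_\lambda^{-2}$ from two derivatives of the cutoff leaves a net positive power of $r_\lambda$ since $\delta > 1$ and the surfaces are $C^2$-close at scale $r_\lambda$. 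Likewise $\|\tilde{\mathcal L}_\lambda - \mathcal L_{\mathcal C}\|$ on the gluing region is controlled by the $C^2$ size of the graph defining $\tilde S_\lambda$ there, which by \eqref{graphS_lambda} is $O(\lambda^{-\gamma_1^+ + 1} r_\lambda^{\gamma_1^+})$ and its scaled derivatives — again a positive power of $\lambda$ after plugging in $r = r_\lambda = \lambda^{(n-1)/n}$ (this is exactly the exponent that makes both the graph-size and the cutoff-derivative contributions go to zero simultaneously). Once $\|E\|$ is small, I would invert: build a right inverse $\tilde{\mathcal P}_\lambda$ for $\tilde{\mathcal L}_\lambda$ (with the $\Pi$ boundary condition) by the Neumann series $\tilde{\mathcal P}_\lambda = u_0 \mapsto$ correction, i.e. set $u = u_0 - \tilde{\mathcal P}_\lambda E + \cdots$; concretely, the map $f \mapsto u_0(f,g)$ is an approximate right inverse with error operator of norm $C(\lambda) < 1/2$, so $(I + \text{error})^{-1}$ exists and composing gives the exact solution. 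The boundary condition $\Pi u = \Pi g$ is preserved because the inner corrections are supported away from $\Sigma$ and the outer operator already enforces it; the finite-dimensional obstruction $V$ on $\mathcal C_1$ is absorbed into $\Pi$ exactly as in Theorem \ref{LinearC}, and on $S_+$ the operator $\mathcal L_1$ is genuinely surjective (Theorem \ref{Surjective}), so no new obstructions enter from the inner region.

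The estimate \eqref{bdryschauder} then follows by tracking constants through the construction: $\|u_0\| \le C(\|f\| + \|\Pi g\|)$ from the two model estimates (Theorem \ref{LinearC} and the continuity of $\mathcal F$, using property (2) of the weighted-norm proposition for the rescaling), and the Neumann correction only multiplies $C$ by a bounded factor since $C(\lambda) < 1/2$; the crucial point is that none of these constants depend on $\lambda$ — the $\lambda$-dependence is confined to the \emph{smallness} of the error, not to the size of the inverse. I expect the main obstacle to be bookkeeping the weighted norms carefully across the gluing region: the norm on $\tilde S_\lambda$ is defined piecewise via $\gamma_1,\gamma_2$, so one must check that the cutoff commutators, the rescaling $S_+ \leftrightarrow S_\lambda$, and the transition from the $S_\lambda$-annulus norm to the $\mathcal C$-annulus norm all interact consistently — in particular that property (3) of the weighted-norm proposition (the $\lambda^{\delta-\delta'}$ comparison) is not secretly needed with the wrong sign, which would destroy the $\lambda$-independence of the final constant. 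A secondary subtlety is making the overlap of the supports of $\chi_{\mathrm{in}}$ and $\chi_{\mathrm{out}}$ a genuine annulus (not just a point) so that $u_{\mathrm{in}}$ and $u_{\mathrm{out}}$ can be compared there; this forces a mild two-step cutoff or an interior Schauder estimate to upgrade the $C^0$ smallness of $u_{\mathrm{in}} - u_{\mathrm{out}}$ on the overlap to $C^{2,\alpha}$ smallness.
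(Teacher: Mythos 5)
Your overall architecture (split $f$ with cutoffs at scale $r_\lambda$, solve on $S_+$ via rescaling and $\mathcal F$, solve on $\mathcal C_1$ via Theorem \ref{LinearC}, glue, control the error, invert by Neumann series) is the same as the paper's, and the Schauder estimate falls out the same way. But there is a genuine gap in the step where you claim the error from the cutoff commutators is $o(1)$ as $\lambda\to 0$.

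You propose to glue with essentially $\gamma_1,\gamma_2$, supported/transitioning over a single dyadic annulus $[r_\lambda,2r_\lambda]$. Then $\nabla\gamma_i\sim r_\lambda^{-1}$ and $\nabla^2\gamma_i\sim r_\lambda^{-2}$, so in the weighted norms you get $\|\nabla\gamma_i\|_{C^{1,\alpha}_{-1}}=O(1)$ and $\|\Delta\gamma_i\|_{C^{0,\alpha}_{-2}}=O(1)$ — the cutoff derivatives are \emph{exactly} scale-critical. Your own bookkeeping shows this: you pair $r^{-\delta+2}$ with $r_\lambda^{-2}$ against a function of size $r^\delta$ on the gluing annulus $r\approx r_\lambda$, which gives $r_\lambda^{-\delta+2}\cdot r_\lambda^{-2}\cdot r_\lambda^\delta = 1$, not a positive power of $r_\lambda$. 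So the error operator $\mathbb L_\lambda\circ(\text{glued }u_0)-\mathrm{Id}$ is only $O(1)$, and the Neumann series need not converge. Appealing to ``the inner and outer solutions agree to leading order on the overlap'' cannot rescue this, because $u_{\mathrm{in}}=\mathcal F(f_1)$ and $u_{\mathrm{out}}=u_{2,g}$ are produced by two unrelated right inverses and nothing in the construction forces them to match to any order on the annulus; any such matching would itself have to be arranged (e.g.\ by subtracting a Jacobi field), which you do not do.

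The paper's fix is precisely the point you are missing: it introduces a second, \emph{asymmetric} pair of cutoffs $\beta_1,\beta_2$ that transition not over a single dyadic annulus but over the whole range $[\lambda^{a_+},\lambda^{a_-}]$ (a band of $\sim|\log\lambda|$ dyadic scales), chosen so that $\beta_i\equiv 1$ on $\mathrm{supp}\,\gamma_i$. Because $\beta_i$ is a function of $\log|x|/\log\lambda$, its scale-invariant derivative picks up the extra factor $1/|\log\lambda|$:
\[
\|\nabla\beta_i\|_{C^{1,\alpha}_{-1}}\le \frac{C}{|\log\lambda|}.
\]
The approximate inverse is then $P_\lambda(f,\Pi g)=\beta_1\lambda^2 u_1\circ\Lambda^{-1}+\beta_2 u_{2,g}$: decompose $f$ with the sharp cutoffs $\gamma_i$, but reassemble the solutions with the spread-out $\beta_i$. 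Since $\beta_i\gamma_i=\gamma_i$, the commutator error $[\tilde{\mathcal L}_\lambda,\beta_i]$ lands in the annular zone where $\gamma_i$ vanishes, and the $1/|\log\lambda|$ gain makes $\|\mathbb L_\lambda\circ P_\lambda-\mathrm{Id}\|=o(1)$, which is what the Neumann inversion actually requires. Without this (or some substitute such as a built-in matching of $u_{\mathrm{in}}$ and $u_{\mathrm{out}}$ to several orders), the argument does not close. The rest of your proposal — the boundary condition being carried by the cone piece, $\mathcal L_1$ having no cokernel on $S_+$, and the final Schauder bound following from boundedness of the right inverse — matches the paper once this is repaired.
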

\begin{proof}
Define the following scaling map between leaves of the foliation,  \begin{eqnarray*}
 \Lambda:S_{+} &\rightarrow& S_{\lambda},\\
 x &\mapsto& \lambda x.
 \end{eqnarray*}
Recall the cutoff function $\gamma_1: \mathbb R\rightarrow [0,1]$,
 $$ \gamma_1=\left\{\begin{array}{cc}
      0 \mbox{ if } & r > 2r_{\lambda}  \\
      1 \mbox{ if } & r< r_{\lambda} 
 \end{array} \right.,$$
 and $\gamma_2 = 1-\gamma_1$. The preliminary guess would be to patch up solutions on the two regions $\mathcal{C}_1$ and $S_+$ together to get a solution to this problem. So if $f\in C^{0, \alpha}_{\delta-2}(\tilde S_\lambda)$, writing  
 $$ f = \gamma_1 f+\gamma_2 f ,$$
and this enables us to think of $\gamma_1 f$ and $\gamma_2 f$ as functions on $S_{\lambda}$ and $\mathcal{C}_1$. Given a function on $S_\lambda$, using $\Lambda$ we can rescale it to a function on $S_+$ and vice versa. Hence, we can write $f$ as,  
$$ f= \gamma_1 \lambda f_1\circ \Lambda^{-1}+ \gamma_2 f ,$$
here $f_1$ is a function on $S_+$. By Theorem \ref{InverserOnS_+}, choose  $u_1 =\mathcal F(f_1)\in C^{2, \alpha}_\delta(S_+)$ where $\mathcal{L}_1 u_1 =f_1$ on $S_+$. Using Theorem \ref{LinearC}, let $u_{2, g}\in C^{2, \alpha}_\delta (\mathcal C_1) $ be such that 
\begin{alignat*}{2}
       \mathcal{L}_{\mathcal{C}} u_{2, g}&=\gamma_2f &\quad &\text{on }\mathcal C_1,\\
                \Pi u_{2,g}&=\Pi g &      &\text{on } \Sigma.
\end{alignat*} 
The first idea would be to glue these solutions, i.e, $\gamma_1\lambda u_1\circ \Lambda^{-1}+\gamma_2 u_{2, g}$ to obtain a solution for the linearized problem on the approximate surface. But this does not solve the equation the gluing region $\{r_\lambda<r<2r_\lambda\}.$\\
The question can be reduced to finding a right inverse to $\tilde{\mathcal{L}_{\lambda}}$. Motivated by the methods in \cite{GaborBook}, \cite{Gabor2020},  we define a new set of cutoff functions on $\tilde{S_{\lambda}}$, which will help us construct an approximate inverse. Choose $a_-, a_0, a_+$ such that,
 $$0<a_- < a_0=\frac{n-1}{n}<a_+<1 ,$$
 and,
$$0<\lambda^{a_+}<r_{\lambda}=\lambda^{a_0}<2r_{\lambda}<\lambda^{a_-}<1.$$
Consider the smooth cutoff function $\beta: \mathbb{R}\rightarrow \mathbb{R} $ such that,
$$ \beta(x)=\left\{\begin{array}{cc}
      0 \mbox{ if }&  x< 0  \\
      1 \mbox{ if }& x> 1
 \end{array} \right. , $$
 and define 
 $$
 \psi_2(x)\coloneqq\frac{x-a_+}{a_0-a_+}.
$$
 Consider  $\beta_2:\tilde{S_{\lambda}}\rightarrow \mathbb{R}$ defined as, 
 $$ \beta_2(x)\coloneqq \beta\circ\psi_2\left(\frac{\log |x|}{\log \lambda}\right),$$
then,
$$ \beta_2(x)=\left\{\begin{array}{cc}
      0 \mbox{ if }& |x|< \lambda^{a_+}  \\
      1 \mbox{ if }& |x|> r_{\lambda}
 \end{array} \right. . $$
Similarly, define $\psi_1$ and $\beta_1$ such that
$$\beta_1(x)=\beta\circ\psi_1\left(\frac{\log |x|}{\log \lambda}\right),$$
and,
$$ \beta_1=\left\{\begin{array}{cc}
      1 \mbox{ if }& |x|< 2r_{\lambda}  \\
      0 \mbox{ if }& |x|> \lambda^{a_-} 
 \end{array} \right. . $$  
Note that $\beta_i$ is 1 on the support of $\gamma_i$ and hence $\beta_i\gamma_i=\gamma_i$. Caution for $i=1, 2$,  $\beta_i$'s do not form a partition of unity, but they are going to be important to define an approximate inverse by helping us bound the derivatives better in the gluing region. Using the derivatives of $\beta$ we can prove that $\beta_i$ satisfies the estimates
\begin{equation}\label{beta1}
     \|\nabla \beta_i\|_{C^{1, \alpha}_{-1}}\leq \frac{C}{\log \lambda}, 
\end{equation}
where the constant $C$ does not depend on $\lambda$. Now we use these $\beta_i$'s to define the following an approximate inverse. Define, $P_\lambda:C^{0, \alpha}_{\delta-2}(\tilde{S}_{\lambda})\times \Pi (C^{2, \alpha}(\Sigma)) \rightarrow C^{2, \alpha}_{\delta}(\tilde{S_{\lambda}})$ as
 $$P_\lambda(f, \Pi g)\coloneqq\beta_1 \lambda^2 u_1\circ \Lambda^{-1}+\beta_2 u_{2, g}.$$
Note, here $u_1, u_{2, g}$ are functions on $S_+$ and $\mathcal C_1$, then $\beta_1 u_1\circ \Lambda^{-1}$ and $\beta_2 u_{2, g}$ are functions on $\tilde{S_{\lambda}}$ and $\mathcal C_1$ respectively.  Similar to eq.(\ref{scaling1}), and (\ref{scaling2}), $\tilde{\mathcal{L}}_\lambda$ satisfies similar scaling properties, i.e, for $u_1$ a function on $S_+$ we have, 
\begin{equation}\label{scaling3}
\mathcal{L}_1u_1=\lambda^2\mathcal{L}_{\lambda}(u_1\circ\Lambda^{-1})\circ \Lambda.
\end{equation}
One would hope $P_\lambda$ is the right inverse to $\mathcal L_\lambda$, so let's try to compute $\tilde{\mathcal{L}_{\lambda}}\circ P_\lambda(f)-f$.  Let us begin by computing, $\mathcal{L}_{\lambda}(\beta_1 \lambda^{2}u_1\circ\Lambda^{-1})-\gamma_1 f$. Note,
\begin{align*}
&\mathcal{L}_{\lambda}(\beta_1 \lambda^2 u_1\circ\Lambda^{-1})-\gamma_1 f \\
&=\beta_1 \mathcal{L}_{\lambda} (\lambda^2 u_1\circ \Lambda^{-1})-\gamma_1 f +\Delta \beta_1 \lambda^2 u_1\circ \Lambda^{-1}+2\lambda^2\nabla \beta_1 \nabla( u_1\circ\Lambda^{-1}),  \\
&=\beta_1 \mathcal{L}_{1}(u_1)\circ \Lambda^{-1} -\gamma_1 f + \Delta \beta_1 \lambda^2 u_1\circ \Lambda^{-1}+2\lambda^2\nabla \beta_1 \nabla (u_1\circ\Lambda^{-1}),\\
&= \Delta \beta_1 \lambda^2 u_1\circ \Lambda^{-1}+2\lambda^2\nabla \beta_1 (\nabla u_1)\circ \Lambda^{-1},
\end{align*} 
where we have used the scaling from eq.(\ref{scaling3}), $\mathcal{L}_1(u_1)\circ\Lambda=\gamma_1f$, and $\beta_1$ is 1 on the support of $\gamma_1$. Using the bound on $\nabla \beta_1$ from eq.(\ref{beta1}), note that
\begin{align*}
&\|\mathcal{L}_{\lambda}(\beta_1 \lambda^2 u_1\circ\Lambda^{-1})-\gamma_1 f\|_{C^{0, \alpha}_{\delta-2}}\\
&\leq C\lambda^2\|\Delta \beta_1 \|_{C^{0, \alpha}_{-2}} \|u_1\circ \Lambda^{-1}\|_{C^{2, \alpha}_{\delta}}+ C\lambda^2 \|\nabla \beta_1 \|_{C^{1, \alpha}_{-1}}\| (\nabla u_1)\circ \Lambda^{-1}\|_{C^{1, \alpha}_{\delta-1}}, \\
&\leq C\lambda^{2+\delta}\|\Delta \beta_1 \|_{C^{0, \alpha}_{-2}} \|u_1\|_{C^{2, \alpha}_{\delta}}+ C\lambda^{1+\delta} \|\nabla \beta_1 \|_{C^{1, \alpha}_{-1}}\| \nabla u_1\|_{C^{1, \alpha}_{\delta-1}},\\
&\leq C \frac{\lambda^{2+\delta}}{\log \lambda}\|f\|_{C^{0, \alpha}_{\delta-2}} \leq o(1) \|f\|_{C^{0, \alpha}_{\delta-2}}.
\end{align*}
Using $\beta_2$, we can prove a similar inequality in the cone region as well,
\begin{equation*}
\mathcal{L_{C}}(\beta_2u_{2, g})-\gamma_2f=\beta_2\gamma_2 f- \gamma_2 f+2\nabla \beta_2 \nabla u_{2, g}+\Delta \beta_2 u_{2, g},
\end{equation*}
and,
\begin{equation}\label{eqn:LC bound}
\|\mathcal{L_{C}}(\beta_2u_{2, g})-\gamma_2f\|_{C^{0, \alpha}_{\delta-2}}\leq o(1)(\|f\|_{C^{0, \alpha}_{\delta-2}}+\|\Pi g\|_{C^{2, \alpha}}).
\end{equation}
Define,
\begin{align*}
    \mathbb{L_\lambda}: C^{2, \alpha}_{\delta}(\tilde{S_\lambda})& \rightarrow C^{0, \alpha}_{\delta-2}(\tilde{S_\lambda})\times \Pi (C^{2, \alpha}(\Sigma)),\\
    \mathbb L_\lambda(u)& \coloneqq(\tilde{\mathcal L}_\lambda(u), \Pi u).
\end{align*}
Hence, by combining the inequality on both the regions we get,
$$\|(\mathbb L_{\lambda}\circ P_\lambda -Id)(f, \Pi g)\|_{C^{0, \alpha}_{\delta-2}\times \Pi(C^{2, \alpha})}\leq o(1)(\|f\|_{C^{0, \alpha}_{\delta-2}}+\|\Pi g\|_{C^{2, \alpha}} ),$$
where $o(1)$ goes to zero as $\lambda\rightarrow 0$, and $Id$ is the identity function on $C^{0, \alpha}_{\delta-2}(\tilde{S}_{\lambda})\times \Pi (C^{2, \alpha}(\Sigma))$. By choosing $\lambda \ll 1$ we can ensure that, 
$$\|(\mathbb L_{\lambda}\circ P_\lambda -Id)\|_{C^{0, \alpha}_{\delta-2}\times\Pi(C^{2, \alpha})}\leq \frac{1}{2}.$$
This implies that for $\lambda$ small, $\mathbb L_\lambda\circ P_\lambda$ is invertible on $C^{0, \alpha}_{\delta-2}(\tilde{S}_{\lambda})\times \Pi (C^{2, \alpha}(\Sigma))$. Define,  
$R_{\lambda}:C^{0, \alpha}_{\delta-2}(\tilde{S}_{\lambda})\times \Pi (C^{2, \alpha}(\Sigma)) \rightarrow C^{2, \alpha}_{\delta}(\tilde{S_{\lambda}})$, as
\begin{equation}\label{R_lambda}
R_{\lambda}\coloneqq P_\lambda\circ (\mathbb L_{\lambda}\circ P_\lambda)^{-1}.
\end{equation}
\begin{prop}
The norm of  $R_\lambda$ is bounded and $R_\lambda$ is the right inverse to $\mathbb L_\lambda$, i.e, given $f\in C^{0, \alpha}_{\delta-2}(\tilde{S}_\lambda)$ and $g\in C^{2, \alpha}(\Sigma)$ then $u=R_\lambda(f,\Pi g)$ is a solution to Theorem \ref{Lineareqn}.  
\end{prop}
\begin{proof}
The norm of $(\mathbb L_\lambda\circ P_\lambda)^{-1}$ is bounded by 2, hence the norm of $R_{\lambda}$ is bounded independent of $\lambda$, and $\mathbb L_\lambda \circ R_\lambda =Id$. Therefore, if $u=R_\lambda (f, \Pi g)$ then $\mathcal{L}_\lambda u =f$ and $\Pi u= \Pi g$.  
\end{proof}
 The Schauder type estimates follow from $R_\lambda$ is bounded, if $u=R_\lambda (f, \Pi g)$, then
 $$ \|u\|_{C^{2, \alpha}_{\delta}}\leq C(\|f\|_{C^{0, \alpha}_{\delta-2}}+\|\Pi g\|_{C^{2, \alpha}})$$
\end{proof}
Since the choices $u_1$ and $u_{2, g}$ are unique and continuous, hence the inverse $R_\lambda (f, g)$ is continuous and satisfies
\begin{equation} \label{Inverse.Linearity}
 R_\lambda(f_1, g_1)+R_\lambda(f_2, g_2)=R_\lambda(f_1+f_2, g_1+g_2).
 \end{equation}

Jacobi fields, which are solutions to the linear problem, are our first approximate solutions to the non-linear problem. Define $H_\lambda g =R_\lambda (0, \Pi g)$ as the solution in Theorem \ref{Lineareqn} with $f=0$, i.e, 
\begin{alignat*}{2}
       \tilde{\mathcal{L}_{\lambda}}(H_\lambda g)&=0 &\quad &\text{on }\tilde{S_\lambda},\\
                \Pi (H_\lambda g)&=\Pi g &      &\text{on }\Sigma.
\end{alignat*} 
If $u=R_\lambda (f,\Pi g)$ is the solution to the linear problem as constructed in Theorem \ref{Lineareqn}, then by eq.(\ref{Inverse.Linearity}),  $R_\lambda (f, 0)=u-H_\lambda g $ . Hence, from the Schauder estimate we have the following, 
\begin{equation}\label{uHg bound}
\|u-H_\lambda g\|_{C^{2, \alpha}_{\delta}}<C\|f\|_{C^{0, \alpha}_{\delta-2}}.
\end{equation}

\subsection{The Nonlinear Equation}\label{Non Linear}
Through this subsection fix a $g\in C^{2, \alpha}(\Sigma)$. The aim is to solve the minimal surface equation $\tilde{\mathcal{M}}_\lambda u=0$  for a given boundary condition $\Pi g$ on $\Sigma$. Since, $\tilde{\mathcal{M}_{\lambda}}=\tilde{\mathcal{L}_{\lambda}}+\tilde{Q_{\lambda}}$, hence solving $\tilde{\mathcal{M}}_\lambda u=0$ is equivalent to proving 
\begin{align*}
    \tilde{\mathcal{L}_{\lambda}}u&=-\tilde{Q}_\lambda u,\\
    u&=-R_{\lambda}(\tilde{Q}_\lambda(u),\Pi g).
\end{align*}
With the above motivation, we define, 
\begin{align*}
\mathcal{N}:C^{2,\alpha}_{\delta}(\tilde{S_{\lambda}})&\rightarrow C^{2,\alpha}_{\delta}(\tilde{S_{\lambda}}),\\
u&\mapsto -R_{\lambda}(\tilde{Q_{\lambda}}(u), \Pi g).
\end{align*}
The aim is to prove that in a nice enough set, we can find a fixed point for $\mathcal{N}$ with the required boundary conditions. Consider the following lemma, 

\begin{lemma} \label{Q bound lemma} For $\|u\|_{C^{2, \alpha}_1}$ small
$$\|\tilde{Q}_\lambda(u)-\tilde{Q}_\lambda(v)\|_{C^{0, \alpha}_{\delta-2}}\leq C(\|u\|_{C^{2, \alpha}_{1}}+\|v\|_{C^{2, \alpha}_{1}})\|u-v\|_{C^{2, \alpha}_{\delta}}.$$
\end{lemma}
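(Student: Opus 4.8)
The plan is to estimate $\tilde{Q}_\lambda$ by working separately on the two regions where $\tilde S_\lambda$ is modeled on $S_\lambda$ (i.e.\ $r\le r_\lambda$, via rescaling to $S_+$) and on $\mathcal C_1$ (i.e.\ $r\ge r_\lambda$), since the weighted norm on $\tilde S_\lambda$ is defined via the cutoffs $\gamma_1,\gamma_2$. On the cone region this is precisely the quadratic-error estimate used in the proof of Theorem~\ref{CHSthm}; on the Hardt-Simon region it is the analogous estimate for $\mathcal M_1$ on $S_+$, which is a smooth complete hypersurface with bounded geometry at scale $r$. In both cases the input is the same structural fact: $\tilde Q_\lambda = \tilde{\mathcal M}_\lambda - \tilde{\mathcal L}_\lambda$ collects the terms of the quasilinear mean-curvature operator that are at least quadratic in $(u,\nabla u,\nabla^2 u)$, with coefficients that are smooth functions of $x$, $u$, and $\nabla u$ on the relevant compact links. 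So the first step I would carry out is to write $\tilde{\mathcal M}_\lambda(u) = a(x,u,\nabla u)^{ij}\nabla^2_{ij}u + b(x,u,\nabla u)$ and Taylor-expand $a$ and $b$ around $(u,\nabla u)=(0,0)$, isolating the first-order part as $\tilde{\mathcal L}_\lambda$.

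The second step is the pointwise/fundamental-theorem-of-calculus estimate. Writing $\tilde Q_\lambda(u)-\tilde Q_\lambda(v) = \int_0^1 \frac{d}{dt}\tilde Q_\lambda(v+t(u-v))\,dt$ and differentiating, the integrand is a sum of terms each of which is a product of: (i) a coefficient that is a smooth bounded function of $x$ and of the (small) arguments $v+t(u-v)$ and its gradient; (ii) one factor drawn from $\{u,\nabla u,\nabla^2 u\}$ or $\{v,\nabla v,\nabla^2 v\}$ (responsible for the prefactor $\|u\|_{C^{2,\alpha}_1}+\|v\|_{C^{2,\alpha}_1}$); and (iii) one factor from $\{u-v,\nabla(u-v),\nabla^2(u-v)\}$ (responsible for $\|u-v\|_{C^{2,\alpha}_\delta}$). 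The key is to track weights correctly: in each local annulus $A_R$ one rescales by $R^{-2}g_{euc}$ so that the operator and its coefficients are uniformly $C^{0,\alpha}$-bounded, and the quadratic structure forces the weight to add, giving weight $1 + \delta + (\text{coefficient weight} \ge -2) \le \delta - 2$ — i.e.\ the output lands in $C^{0,\alpha}_{\delta-2}$ with a constant independent of $R$ and of $\lambda$. Here one uses $\delta > 1$ so that $\|v+t(u-v)\|_{C^{2,\alpha}_\delta} \le C\|v+t(u-v)\|_{C^{2,\alpha}_1}$ stays small, keeping the Taylor coefficients in the range where they are uniformly bounded. The product estimates themselves are just the standard multiplicative property $\|fg\|_{C^{0,\alpha}_{\delta_1+\delta_2-?}} \le C\|f\|_{C^{?,?}}\|g\|_{C^{?,?}}$ in weighted Hölder spaces, applied annulus by annulus.

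The third step is simply to assemble: split $u = \gamma_1 u + \gamma_2 u$ and $v = \gamma_1 v + \gamma_2 v$, apply the estimate of the previous paragraph on the $S_+$ side (after rescaling by $\Lambda$, noting $\tilde Q_\lambda$ scales the same way $\tilde{\mathcal M}_\lambda$ does by eq.~\eqref{scaling2}-type identities, so the rescaling is norm-compatible up to the fixed power of $\lambda$ that is absorbed into the constant) and on the $\mathcal C_1$ side (directly, as in Theorem~\ref{CHSthm}), and add, using $\|\nabla\gamma_i\|_{C^{1,\alpha}_{-1}} \le C/\log\lambda \le C$ so that multiplication by the cutoffs is bounded on the weighted spaces uniformly in $\lambda$. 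I expect the main obstacle to be purely bookkeeping: verifying that the weight arithmetic genuinely closes at $\delta - 2$ (rather than only at $2\cdot 1 - 2 = 0$, which would be too weak) requires using that one of the two factors can be measured in the weaker weight $1$ while the difference factor carries the full weight $\delta$ — this asymmetry is exactly what the statement encodes, and getting the smallness hypothesis ``$\|u\|_{C^{2,\alpha}_1}$ small'' to do double duty (keeping Taylor coefficients bounded \emph{and} supplying the prefactor) is the one place care is needed. No genuinely new idea beyond what is already in the proof of Theorem~\ref{CHSthm} is required; the only addition is the $\lambda$-uniformity, which follows from the scale-invariant way the weighted norms and the cutoff bounds have been set up.
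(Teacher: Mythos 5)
Your proposal follows essentially the same route as the paper: extract the quadratic structure of $\tilde Q_\lambda=\tilde{\mathcal M}_\lambda-\tilde{\mathcal L}_\lambda$ via a mean-value/fundamental-theorem argument (the paper writes $\tilde Q_\lambda u-\tilde Q_\lambda v=D\tilde Q_{\lambda,\omega}(u-v)$ with $\omega$ on the segment, then bounds $\|D\tilde Q_{\lambda,\omega}\|\leq C\|\omega\|_{C^{2,\alpha}}$ from the smoothness of the coefficients), giving the unweighted bound, and then obtain the weighted estimate by applying this to $u_r,v_r$ and using the scaling identity $(\tilde{\mathcal M}_\lambda u)_r=r^{-1}\tilde{\mathcal M}_\lambda(r^{-1}u_r)$ — exactly the asymmetric weight bookkeeping ($1+\delta-2=\delta-1$ after one factor of $r^{-1}$) you describe. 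One small caveat on your Step 3: since $\tilde Q_\lambda$ is nonlinear, you cannot decompose the \emph{argument} as $u=\gamma_1 u+\gamma_2 u$ and apply the estimate region by region; the cutoffs should instead be applied to the \emph{output} $\tilde Q_\lambda u-\tilde Q_\lambda v$ when unpacking the norm $\|\cdot\|_{C^{0,\alpha}_{\delta-2}(\tilde S_\lambda)}=\|\gamma_1\cdot\|_{C^{0,\alpha}_{\delta-2}(S_\lambda)}+\|\gamma_2\cdot\|_{C^{0,\alpha}_{\delta-2}(\mathcal C)}$. The paper avoids this entirely by proving the unweighted estimate once on all of $\tilde S_\lambda$ and then rescaling, which is the cleaner framing of the same idea.
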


\begin{proof}
    By the mean value theorem, we have     $$\tilde{Q_{\lambda}}u-\tilde{Q_{\lambda}}v=D\tilde{Q}_{\lambda,\omega}(u-v),$$
    where $\omega=tu+(1-t)v$ for $0\leq t \leq 1.$ Here $D\tilde Q_{\lambda, w}$ is the derivative of the function $\tilde Q_\lambda$ at $w$, i.e, 
    $$D\tilde Q_{\lambda, w} (u)=\left.\frac{d}{dt}\right|_{t=0} \tilde Q_\lambda(w+tu). $$
Differentiating $\tilde{\mathcal{M}_{\lambda}}=\tilde{\mathcal{L}_{\lambda}}+\tilde{Q_{\lambda}}$, and noting that $D\tilde{\mathcal L}_{\lambda,w}=\tilde {\mathcal L}_\lambda = D\tilde{\mathcal M}_{\lambda,0}$ we get, 
\begin{align*}
        D\tilde{Q}_{\lambda,\omega}&=D\tilde{\mathcal M}_{\lambda,\omega}-D\tilde{\mathcal M}_{\lambda,0},\\
    \| D\tilde{Q}_{\lambda,\omega}\|_{C^{0, \alpha}}&=\|D\tilde{\mathcal M}_{\lambda,\omega}-D\tilde{\mathcal M}_{\lambda,0}\|_{C^{0, \alpha}}\leq C\|\omega\|_{C^{2, \alpha}}.
    \end{align*}
    Hence, 
    \begin{align*}
        \|\tilde{Q_{\lambda}}u-\tilde{Q_{\lambda}}v\|_{C^{0,\alpha}}&\leq C\|\omega\|_{C^{2, \alpha}}\|u-v\|_{C^{2, \alpha}},\\
        &\leq C(\|u\|_{C^{2, \alpha}}+\|v\|_{C^{2, \alpha}})\|u-v\|_{C^{2, \alpha}}.
    \end{align*}
    Now using the scaling, 
    $(\tilde{\mathcal{M}_{\lambda}}u)_r=r^{-1} \tilde{\mathcal{M}_{\lambda}}(r^{-1}u_r)$ and a similar scaling for $\tilde{\mathcal{L}_{\lambda}}$ we can derive that $\tilde{Q_{\lambda}}$ also scales similarly. Using the above inequality for $u_r$ and $v_r$ in place of $u$ and $v$, 
\begin{align*}
        r^{-\delta}\|(\tilde{Q_{\lambda}}u)_r-(\tilde{Q_{\lambda}}v)_r\|_{C^{0,\alpha}}&\leq C r^{-\delta}\| \tilde{Q_{\lambda}}u_r-\tilde{Q_{\lambda}}v_r\|_{C^{0, \alpha}} r^{-1},\\
        &\leq Cr^{-1}(\|u_r\|_{C^{2, \alpha}}+\|v_r\|_{C^{2, \alpha}})r^{-\delta}\|u_r-v_r\|_{C^{2, \alpha}}.
    \end{align*}
Using the definition for the norms we get, 
\begin{equation} \label{Q.1}
    \|\tilde{Q_{\lambda}}u-\tilde{Q_{\lambda}}v\|_{C^{0,\alpha}_{\delta-2}}\leq C(\|u\|_{C^{2, \alpha}_1}+\|v\|_{C^{2, \alpha}_1})\|u-v\|_{C^{2, \alpha}_{\delta}}.
\end{equation}
\end{proof}
The above lemma, in combination with the contraction principle on an appropriate set, allows us to find a fixed point for the operator $\mathcal N$. Fix $g\in C^{2, \alpha}(\Sigma)$ and let $\|g\|_{C^{2, \alpha}}\leq \epsilon$, consider the following set, 
$$E_g\coloneqq \{u\in C^{2, \alpha}_{\delta}(\tilde{S_{\lambda}}): \Pi u=\Pi g, \|u-H_\lambda g\|_{C^{2, \alpha}_{\delta}}\leq c\|g\|^2\}.$$
\begin{theorem}
    For $\epsilon$ small and  $c$ large and independent of $\epsilon$, $\mathcal{N}$ is a contraction on $E_g$, and $\mathcal{N}$ has a fixed point on $E_g$.
\end{theorem}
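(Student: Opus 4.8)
The plan is to apply the Banach fixed point theorem to the operator $\mathcal{N}$ on the complete metric space $E_g$, equipped with the $C^{2,\alpha}_\delta$-metric. The two things to verify are: (i) $\mathcal{N}$ maps $E_g$ into itself, and (ii) $\mathcal{N}$ is a contraction on $E_g$ with respect to the $C^{2,\alpha}_\delta$-norm. First I would note that $E_g$ is nonempty (it contains $H_\lambda g$ up to the higher-order correction) and closed in $C^{2,\alpha}_\delta(\tilde S_\lambda)$, hence complete. The key inputs are the bounded right inverse $R_\lambda$ from Theorem \ref{Lineareqn} (with operator norm bounded independently of $\lambda$ for $\lambda \ll 1$), the quadratic estimate for $\tilde Q_\lambda$ in Lemma \ref{Q bound lemma}, and the linearity relation \eqref{Inverse.Linearity} which gives $\mathcal{N}(u) - \mathcal{N}(v) = -R_\lambda(\tilde Q_\lambda(u) - \tilde Q_\lambda(v), 0)$ and $\mathcal{N}(u) - H_\lambda g = -R_\lambda(\tilde Q_\lambda(u),0)$.

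For the self-mapping property, take $u \in E_g$. Since $\Pi(R_\lambda(\tilde Q_\lambda(u),0)) = 0$ by construction of $R_\lambda$, we get $\Pi(\mathcal{N}(u)) = \Pi g$, so $\mathcal{N}(u)$ satisfies the boundary condition. For the size estimate, write $\mathcal{N}(u) - H_\lambda g = -R_\lambda(\tilde Q_\lambda(u),0)$, so by boundedness of $R_\lambda$ and Lemma \ref{Q bound lemma} (applied with $v = 0$, using $\tilde Q_\lambda(0) = 0$ and the embedding $C^{2,\alpha}_\delta \hookrightarrow C^{2,\alpha}_1$ since $\delta > 1$),
\begin{align*}
\|\mathcal{N}(u) - H_\lambda g\|_{C^{2,\alpha}_\delta} &\leq C\|\tilde Q_\lambda(u)\|_{C^{0,\alpha}_{\delta-2}} \leq C\|u\|_{C^{2,\alpha}_1}\|u\|_{C^{2,\alpha}_\delta}.
\end{align*}
Now for $u \in E_g$ one has $\|u\|_{C^{2,\alpha}_\delta} \leq \|H_\lambda g\|_{C^{2,\alpha}_\delta} + c\|g\|^2 \leq C\epsilon$ for $\epsilon$ small (using the Schauder bound $\|H_\lambda g\|_{C^{2,\alpha}_\delta} \leq C\|\Pi g\|_{C^{2,\alpha}} \leq C\epsilon$ from Theorem \ref{Lineareqn}), and similarly $\|u\|_{C^{2,\alpha}_1} \leq C\epsilon$. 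Hence $\|\mathcal{N}(u) - H_\lambda g\|_{C^{2,\alpha}_\delta} \leq C'\epsilon \cdot C\epsilon = C''\epsilon^2 \leq c\|g\|^2$ provided we fix $c \geq C''$ large enough and independent of $\epsilon$ (absorbing the $\|g\|^2 \geq $ comparison by noting $\|g\|$ can be replaced by $\epsilon$ or by carrying the constant carefully); this shows $\mathcal{N}(u) \in E_g$.

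For the contraction estimate, take $u, v \in E_g$. Then $\mathcal{N}(u) - \mathcal{N}(v) = -R_\lambda(\tilde Q_\lambda(u) - \tilde Q_\lambda(v), 0)$, so by boundedness of $R_\lambda$ and Lemma \ref{Q bound lemma},
\begin{align*}
\|\mathcal{N}(u) - \mathcal{N}(v)\|_{C^{2,\alpha}_\delta} &\leq C\|\tilde Q_\lambda(u) - \tilde Q_\lambda(v)\|_{C^{0,\alpha}_{\delta-2}} \\
&\leq C(\|u\|_{C^{2,\alpha}_1} + \|v\|_{C^{2,\alpha}_1})\|u - v\|_{C^{2,\alpha}_\delta} \leq C\epsilon\, \|u - v\|_{C^{2,\alpha}_\delta}.
\end{align*}
Choosing $\epsilon$ small enough that $C\epsilon \leq \tfrac12$ makes $\mathcal{N}$ a contraction. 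By the Banach fixed point theorem there is a unique $u \in E_g$ with $\mathcal{N}(u) = u$, which unwinds to $\tilde{\mathcal{M}}_\lambda u = 0$ on $\tilde S_\lambda$ with $\Pi u = \Pi g$. The main obstacle — already handled by the earlier sections — is that all constants, in particular the norm of $R_\lambda$ and the constants in Lemma \ref{Q bound lemma}, must be uniform in $\lambda$ as $\lambda \to 0$; the delicate point in the self-mapping step is the bookkeeping to ensure $c$ can be chosen large but independent of $\epsilon$ while still closing the estimate $C''\epsilon^2 \leq c\|g\|^2$, which works because $\|g\|$ enters only through the bound $\|g\|_{C^{2,\alpha}} \leq \epsilon$ and the quadratic gain in $\epsilon$ dominates. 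A minor point to record is that the fixed point $u$ a priori lies in $C^{2,\alpha}_\delta(\tilde S_\lambda)$, and one then transfers it to a genuine solution on $S_\lambda \cap B_1$ using that $\tilde S_\lambda$ agrees with $S_\lambda$ near the origin and with $\mathcal C_1$ near $\Sigma$, together with elliptic regularity.
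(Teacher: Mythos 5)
Your overall strategy — Banach fixed point on $E_g$ using Lemma \ref{Q bound lemma} and the bounded right inverse $R_\lambda$, with the contraction estimate obtained from $\mathcal N(u)-\mathcal N(v)=-R_\lambda(\tilde Q_\lambda(u)-\tilde Q_\lambda(v),0)$ — is the same as the paper's, and your contraction step is fine. But there is a genuine gap in the self-mapping step. After bounding $\|u\|_{C^{2,\alpha}_\delta}\leq \|H_\lambda g\|_{C^{2,\alpha}_\delta}+c\|g\|^2\leq C\epsilon$, you conclude $\|\mathcal N(u)-H_\lambda g\|_{C^{2,\alpha}_\delta}\leq C''\epsilon^2\leq c\|g\|^2$. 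That last inequality cannot be arranged with $c$ independent of $g$: $\|g\|_{C^{2,\alpha}}\leq\epsilon$ is only an upper bound, so $\|g\|$ may be arbitrarily small relative to $\epsilon$, forcing $c\geq C''(\epsilon/\|g\|)^2$, which is unbounded. Your remark that the ``quadratic gain in $\epsilon$ dominates'' has the inequality backwards — you need a quadratic gain in $\|g\|$, and you discarded it by replacing $\|g\|$ with $\epsilon$. The fix is to keep the $\|g\|$ dependence: from the Schauder estimate and $u\in E_g$ one gets $\|u\|_{C^{2,\alpha}_\delta}\leq(C_2+c\|g\|)\|g\|$, hence $\|\mathcal N(u)-H_\lambda g\|\leq C(C_2+c\epsilon)^2\|g\|^2$, and one may first pick $c$ with $4CC_2^2\leq c$ and then shrink $\epsilon$ so that $c\epsilon\leq C_2$.

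The paper avoids this bookkeeping entirely by a triangle-inequality split,
$\|\mathcal N(u)-H_\lambda g\|\leq\|\mathcal N(u)-\mathcal N(H_\lambda g)\|+\|\mathcal N(H_\lambda g)-H_\lambda g\|$.
The first term is bounded by $\tfrac12\|u-H_\lambda g\|\leq\tfrac{c}{2}\|g\|^2$ by applying the contraction estimate with $v=H_\lambda g$ and then using the definition of $E_g$ directly, rather than rederiving a crude bound on $\|u\|$. The second term is bounded by applying Lemma \ref{Q bound lemma} with $u=H_\lambda g$, $v=0$ together with $\|H_\lambda g\|_{C^{2,\alpha}_\delta}\leq C_2\|g\|$, giving $\leq C_1C_3C_2^2\|g\|^2$, and then choosing $c$ with $C_1C_3C_2^2\leq c/2$. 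This split exploits the structure of $E_g$ where it matters and is the cleaner way to make $c$ manifestly independent of both $\epsilon$ and $g$.
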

\begin{proof}
For $u\in E_g$ , and $\epsilon$ small we can ensure that $\|u\|_{C^{2, \alpha}_1}+\|H_\lambda g\|_{C^{2, \alpha}_1}$ is small enough such that using the Lemma \ref{Q bound lemma} and the fact that $R_{\lambda}$ is bounded we can ensure that,
 $$\|\mathcal{N}(u)-\mathcal{N}(H_\lambda g)\|_{C^{2, \alpha}_{\delta}}\leq \frac{1}{2}\|u-H_\lambda g\|_{C^{2, \alpha}_{\delta}}.$$
Applying Lemma \ref{Q bound lemma} for $H_\lambda g$ and $0$ we get that, 
$$\|\tilde{Q_{\lambda}}(H_\lambda g)\|_{C^{0,\alpha}_{\delta-2}}\leq C_1 \|H_\lambda g\|_{C^{2,\alpha}_{1}}\|H_\lambda g\|_{C^{2,\alpha}_{\delta}}.$$
By Schauder estimates we have $\|H_\lambda g\|_{C^{2,\alpha}_{\delta}}\leq C_2\|g\|_{C^{2, \alpha}}$. 
Computing using \ref{uHg bound}
$$ 
 \|\mathcal{N}(H_\lambda g)-H_\lambda g\|_{C^{2,\alpha}_{\delta}} \leq C_3 \| \tilde{Q_{\lambda}}\|_{C^{0, \alpha}_{\delta-2}}\leq C_1C_3(C_2)^2\|g\|^2 .$$
Choose $c$ such that, $C_1C_3(C_2)^2\leq c/2$, and 
\begin{align*}
    \|\mathcal{N}(u)-H_\lambda g\|_{C^{2,\alpha}_{\delta}} &\leq \|\mathcal{N}(u)-\mathcal{N}(H_\lambda g)\|_{C^{2,\alpha}_{\delta}}+\|\mathcal{N}(H_\lambda g)-H_\lambda g\|_{C^{2,\alpha}_{\delta}},\\
    &\leq \frac{1}{2}\|u-H_\lambda g\|_{C^{2,\alpha}_{\delta}}+ \frac{c}{2}\|g\|^2 ,\\
    &\leq c\|g\|^2.
    \end{align*}
Hence if $u\in E_g$ then $\mathcal N(u)\in E_g$. Finally, if $u,v \in E_g$ and if $\|u\|_{C^{2,\alpha}_1},  \|v\|_{C^{2,\alpha}_1}<C_4$ small, note $C_4$ depends on $\|g\|_{C^{2, \alpha}}$ and therefore can be controlled by $\|g\|_{C^{2, \alpha}}$. Now, 
$$\|\mathcal N(u)-\mathcal N(u)\|_{C^{2, \alpha}_\delta}=\|R_\lambda\circ (\tilde Q_\lambda(u)-\tilde Q_\lambda(v), \Pi g)\|_{C^{2, \alpha}_\delta}. $$
Using that $R_\lambda$ is bounded and Lemma \ref{Q bound lemma}, 
\begin{align*}
    \|\mathcal N(u)-\mathcal N(u)\|_{C^{2, \alpha}_{\delta}}&\leq C \|u-v\|_{C^{2, \alpha}_\delta}(\|u\|_{C^{2,\alpha}_1}+\|v\|_{C^{2,\alpha}_1}),\\
    &\leq \frac{1}{2}\|u-v\|_{C^{2, \alpha}_\delta},
\end{align*}
where in the second line we have chosen $\epsilon$ small and hence $C_4$ is small.
By the contraction principle, $\mathcal{N}$ has a fixed point $u$, and it satisfies 
\begin{alignat*}{2}
       \tilde{\mathcal{M}_{\lambda}}u&=0&\quad &\text{on }\tilde{S_\lambda},\\
                \Pi u&=\Pi g &      &\text{on }\Sigma.
\end{alignat*} 
Hence this proves Theorem \ref{1} but for the approximate surface.
\end{proof}
 Now we use the result for the approximate surface to describe the result on $S_{\lambda}$. To see this, first consider the region $r\leq r_{\lambda}$: here both $S_{\lambda}$ and $\tilde{S_{\lambda}}$ are the same and hence in this region any graph on $\tilde S_\lambda$ is also a graph over $S_\lambda$. For $r\geq 2r_{\lambda}$, $S_{\lambda}$ is a $\lambda^{(\gamma_1^+ +1)} r^{\gamma^+_1}$ graph over $\tilde{S_{\lambda}}$. For $\lambda\ll 1$, it is easy to see that a graph over $\tilde{S_{\lambda}}$ is also a graph over $S_{\lambda}$. In the gluing region $r_\lambda<r<2r_\lambda $, we can similarly write any graph over $\tilde{S_{\lambda}}$ as a graph over $S_{\lambda}$.\\
Let $w\in \Sigma$ and let $\tilde w \in \Sigma_\lambda$ be its corresponding point. If $g$ is a boundary value over $S_{\lambda}$, and let $\tilde{g}$ be the corresponding boundary value over $\tilde{S_{\lambda}}$. Then $g(w)-\tilde{g}(\tilde w)=a$, where $a$ is a constant. By the definition of $\Pi$, we will have that $\Pi g= \Pi \tilde{g}.$ Hence, the solution we obtained for the approximate surface is also a solution to the same problem over the Hardt-Simon Foliation. This proves Theorem \ref{1} for $\lambda\ll 1$. 

\subsubsection{Spherical graphs}
For some of the results ahead, it will be easier to work with spherical graphs. We shall describe Theorem \ref{1} for spherical graphs. In the case of quadratic cones, it is easy to define spherical graphs.\\
Let $w \in \Sigma$, and let $\nu_{\mathcal{C}}(w)$ be the normal vector at $w$ on $\mathcal{C}$. Then the spherical graph of $g$ at $w$ is given by $\exp_w (g(w) \nu_{\mathcal{C}}(w))$. The geodesic at $w$ in the normal direction is given by,  $$\gamma(t)\coloneqq \cos (t) w+ \sin (t) \nu_{\mathcal{C}}(w).$$
    So given $u\in C^{2, \alpha}_\delta(\mathcal C_1)$
    $$sp.graph_{\mathcal C_1}(u)=\{ \exp_x (u(x)\nu_{\mathcal C}(x))| x\in \mathcal C_1\}, $$
   and similarly for $g\in C^{2, \alpha}(\Sigma)$, the spherical graph is given by 
$$sp.graph_{\Sigma}(g)=\{ \exp_w (g(w)\nu_{\mathcal C}(w))| w\in \Sigma\}.$$
The idea is that since $\exp_w (g(w)\nu_{\Sigma}(w))=\cos (g(w))w+\sin (g(w))\nu_{\Sigma}(w)\approx w+g(w)\nu_{\Sigma}(w)$ for $|g|$ small. Using the exponential maps we can define spherical graphs over $S_\lambda$ as well. To define the spherical graph on $S_\lambda$, let $x\in S_\lambda$, the spherical graph is defined similarly as above but for the projection of $\nu_{S_\lambda}(w)$ onto $T_w(|x|S^n)$, i.e, for $u$ a function on $S_\lambda\cap B_1$
$$ sp.graph_{S_\lambda\cap B_1}= \{\exp_x(g(x)\mbox{proj}_{T_x (|x|S^n)}(\nu_{S_\lambda}(x))) \}.$$
Here, proj$_T \nu$ is the projection of $\nu$ onto the hypersurface $T$. \\
We can rewrite Theorem \ref{1} for spherical graphs as below,  

\begin{theorem}
For $\lambda \ll 1$, given $g\in C^{2, \alpha}(\Sigma_\lambda)$ small, then there exists $u\in C^{2, \alpha}_\delta(S_\lambda)\cap \overline B_1$ such that $N=sp.graph_{S_\lambda}(u)$ is minimal and $\Pi u=\Pi g.$
\end{theorem}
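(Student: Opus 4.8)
The plan is to recast the spherical-graph problem as a quasilinear boundary value problem whose linearization at $0$ is again $\mathcal L_\lambda$, and then rerun the argument of Section \ref{Non Linear} with only cosmetic changes. For a function $u$ on $S_\lambda$ with $\|u\|_{C^{2,\alpha}_1}$ small, the straight segments $t\mapsto x+t\,\nu_{S_\lambda}(x)$ and the spherical geodesics $\gamma_x$ used to define $sp.graph$ both sweep out the same tubular neighbourhood of $S_\lambda$, so there is a reparametrization $\Phi_\lambda$ with $sp.graph_{S_\lambda}(u)=graph_{S_\lambda}(\Phi_\lambda(u))$. The first step is to record the properties of $\Phi_\lambda$ that are needed: $\Phi_\lambda(0)=0$; $D(\Phi_\lambda)_0=\mathrm{Id}$, because $\exp_x(t\,\nu)=x+t\,\nu+O(t^2)$; and, since the spherical exponential map agrees with the Euclidean one to first order and both respect the scaling structure of $\mathcal C$, $\Phi_\lambda$ carries a $\lambda$-independent ball of $C^{2,\alpha}_\delta(S_\lambda)$ into $C^{2,\alpha}_\delta(S_\lambda)$ with $\|\Phi_\lambda(u)-u\|_{C^{2,\alpha}_\delta}\le C\|u\|_{C^{2,\alpha}_1}\|u\|_{C^{2,\alpha}_\delta}$ for a constant $C$ independent of $\lambda$. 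As in the proof of Theorem \ref{1}, it is cleanest to establish this first on the approximate surface $\tilde S_\lambda$, where outside $B_{2r_\lambda}$ one is literally on $\mathcal C$ and the spherical geodesics $t\mapsto\cos(t)w+\sin(t)\nu_{\mathcal C}(w)$ are explicit, and then transfer to $S_\lambda$ exactly as at the end of Section \ref{Non Linear}.

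With $\Phi_\lambda$ in hand, put $\mathcal M^{sp}_\lambda(u)\coloneqq\mathcal M_\lambda(\Phi_\lambda(u))$, the mean curvature of $sp.graph_{S_\lambda}(u)$, and $Q^{sp}_\lambda(u)\coloneqq\mathcal M^{sp}_\lambda(u)-\mathcal L_\lambda(u)$. By the chain rule and $D(\Phi_\lambda)_0=\mathrm{Id}$ the linearization of $\mathcal M^{sp}_\lambda$ at $0$ is exactly $\mathcal L_\lambda$, so the bounded right inverse $R_\lambda$ of $(\tilde{\mathcal L}_\lambda,\Pi)$ furnished by Theorem \ref{Lineareqn} (and transferred to $S_\lambda$ as in Theorem \ref{1}) applies here without change, and combining the estimates on $\Phi_\lambda$ with Lemma \ref{Q bound lemma} yields the same quadratic bound $\|Q^{sp}_\lambda(u)-Q^{sp}_\lambda(v)\|_{C^{0,\alpha}_{\delta-2}}\le C(\|u\|_{C^{2,\alpha}_1}+\|v\|_{C^{2,\alpha}_1})\|u-v\|_{C^{2,\alpha}_\delta}$. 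Then the proof of Theorem \ref{1} goes through verbatim: solving $\mathcal M^{sp}_\lambda(u)=0$ with $\Pi u=\Pi g$ is equivalent to the fixed-point equation $u=-R_\lambda(Q^{sp}_\lambda(u),\Pi g)$, and for $\|g\|_{C^{2,\alpha}}$ small the map $u\mapsto-R_\lambda(Q^{sp}_\lambda(u),\Pi g)$ is a contraction of the set $E_g=\{u\in C^{2,\alpha}_\delta(S_\lambda):\Pi u=\Pi g,\ \|u-H_\lambda g\|_{C^{2,\alpha}_\delta}\le c\|g\|_{C^{2,\alpha}}^2\}$ into itself; its unique fixed point $u$ satisfies $\Pi u=\Pi g$ and makes $N=sp.graph_{S_\lambda}(u)$ minimal, and $N\subset\overline B_1$ automatically, since a spherical graph moves each point of $S_\lambda\cap B_1$ along its sphere of constant radius.

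I expect the only genuine work to lie in the first step: checking that $\Phi_\lambda$ is defined on a $\lambda$-independent $C^1$-ball and that $\Phi_\lambda$ together with its differential is controlled in the weighted norms uniformly in $\lambda$. On $r\le r_\lambda$ this is a scale-by-scale estimate on the fixed asymptotically conical surface $S_+$; on $2r_\lambda\le r\le 1$ one is on $\mathcal C$, where the reparametrization between Euclidean and spherical normal coordinates is explicit; and the gluing annulus $r_\lambda\le r\le 2r_\lambda$ is handled with the cutoffs $\gamma_1,\gamma_2$ as in Definition \ref{approxsurface}. Once this uniformity is in place, every subsequent step is a cosmetic relabeling of the proof of Theorem \ref{1}.
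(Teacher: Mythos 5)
Your reformulation of the spherical-graph problem as a Euclidean-graph problem via a reparametrization $\Phi_\lambda$, with the linearization at $0$ unchanged and the contraction argument rerun, is essentially what the paper intends; the paper records only the first-order agreement $\exp_w(g\nu)=w+g\nu+O(g^2)$ and then asserts the spherical and Euclidean versions may be used interchangeably, offering no further argument. Your $\Phi_\lambda$ makes this implicit transfer precise, and you correctly localize the nontrivial content to the uniform-in-$\lambda$ control of $\Phi_\lambda$.

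However, the specific claim $D(\Phi_\lambda)_0=\mathrm{Id}$ does not hold on all of $S_\lambda$. Your justification --- that $\exp_x(t\nu)=x+t\nu+O(t^2)$ --- accounts only for the reparametrization of arclength and presupposes that the spherical-graph direction and $\nu_{S_\lambda}$ agree to first order. On $\mathcal C$ they do, since $\nu_{\mathcal C}(x)$ is tangent to the sphere $|x|S^n$. But the paper's spherical graph over $S_\lambda$ is taken in the direction $\mbox{proj}_{T_x(|x|S^n)}\nu_{S_\lambda}(x)$, and in the inner region this projection is strictly shorter than $\nu_{S_\lambda}$; it vanishes identically along the waist $S_\lambda\cap\{y=0\}$, where $\nu_{S_\lambda}$ is purely radial. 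There $D(\Phi_\lambda)_0$ acquires a kernel, the map $u\mapsto sp.graph_{S_\lambda}(u)$ is not locally injective, and your estimate $\|\Phi_\lambda(u)-u\|_{C^{2,\alpha}_\delta}\le C\|u\|_{C^{2,\alpha}_1}\|u\|_{C^{2,\alpha}_\delta}$ with $\lambda$-independent $C$ cannot follow by relabeling. One would need either to normalize the graph direction and handle the waist as a removable degeneracy, or to argue that the weight $\delta>1$ --- which forces $|u|\le C(\lambda\rho_0)^\delta$ near the waist --- renders the kernel harmless for the fixed-point scheme. The paper skips this discussion entirely, so you are addressing a genuine omission, but as written your argument rests on an identity $D(\Phi_\lambda)_0=\mathrm{Id}$ that fails precisely where the two notions of graph actually diverge.
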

We shall use either of the results interchangeably.
 
\subsection{Existence}
Let $\mathcal{C}$ represent a generalized Simons cone and $\Sigma=\mathcal{C}\cap \partial B_1$ represent its link. Define, $\theta\coloneqq (a, Q, \lambda)\in \mathbb R^{n+1}\times SO(n+1)\times \mathbb R $, where $a, Q, \lambda$ corresponds to translations, rotations, and dilations of the Hardt-Simon surface. In the same spirit, define
$$\theta (\mathcal{C})\coloneqq a+Q (S_{\lambda}), $$ and $\|\theta\|\coloneqq |a|+ \|Q-Id\|+|\lambda |$. Define $V=\mathbb R^{n+1}\times SO(n+1)\times \mathbb R$ and if $\tilde V=\mathbb R^{n+1}\times \mathfrak{so}(n+1)\times \mathbb R$, for any $g\in \Pi^\perp (C^{2, \alpha} (\Sigma))$ then from Section \ref{sec:prelim} we have that $g=\sum_{i=1}^3 \phi_i(w)a_i(r)$ and using Theorem \ref{eigenvector} this induces an identification between $ \Pi^\perp (C^{2, \alpha}(\Sigma))$ and $\tilde V$. Since the exponential map on the Lie algebra $\exp: \mathfrak{so}(n+1)\rightarrow SO(n+1)$ is a local diffeomorphism, i.e, in a small neighborhood of $(0, Id, 0)\subset V$, $V$ and $\tilde V$ would be diffeomorphic. Hence for a small neighborhood of $\Pi^\perp (C^{2, \alpha} (\Sigma))$ we would have the following identification
$$\Pi^{\perp}(C^{2, \alpha}(\Sigma) \xhookrightarrow[\:]h V,$$
where $h^{-1}(\theta)$ is such that, 
$$sp.graph_{\Sigma}(h^{-1}(\theta))=\theta(\mathcal{C})\cap \partial B_1 .$$
For small enough $\|\theta \|$ and $\|g\|$, this identification shall allow us to interchangeably use $\theta$ or a function in $\Pi^\perp(C^{2, \alpha}(\Sigma))$ to represent the spherical boundary of the translated, rotated, and dilated surface. \\
Let $g\in C^{2, \alpha}(\Sigma)$ and $\theta \in V$. For $\|g\|$ and $\|\theta\|$ small we can define, $g_{\theta}\in C^{2, \alpha}(\theta(\mathcal{C})\cap \partial B_1)$
such that, 
$$sp.graph_{\Sigma}(g)=sp.graph_{\theta(\mathcal{C})\cap \partial B_1}(g_{\theta}).$$
In simpler terms $g_\theta$ is the boundary function for the surface corresponding to $g$ but over the translated, rotated and dialated surfaces. 
Finding a surface over $\mathcal{C}_1$ with boundary $g$ is equivalent to finding a surface over $\theta(\mathcal C)\cap B_1$ with boundary $g_\theta$. We solve the following Dirichlet boundary problem, it is related to the result of Edelen-Spolaor \cite{Nick}. 
\begin{theorem}[Graphical minimal surfaces near the Simons cone]\label{MainTheo}
    Let $\mathcal{C}_1=C^{p,q}\cap B_1$ be the Simons cone as defined above, given $g\in C^{2, \alpha}(\Sigma)$ where $|g|$ is small and $\Sigma=\mathcal{C}\cap \partial B_1$. Then there exists a $\theta\in \mathbb R^{n+1}\times SO(n+1)\times \mathbb R$ such that we have a solution $u\in C^{2, \alpha}_\delta(\theta(\mathcal C_1))$ to 
\begin{alignat*}{2}
       \mathcal{M}u&=0&\quad &\text{on }\theta(\mathcal{C})\cap B_1,\\
                u&=g_{\theta} &      &\text{on }\theta(\mathcal{C})\cap \partial B_1 .
\end{alignat*}
and hence we have a minimal surface $N\subset B_1$ such that $\overline{N}\cap \partial B_1 =sp.graph_{\Sigma}(g)$ and $N$ is graphical over the perturbation $\theta(\mathcal C).$ 
\end{theorem}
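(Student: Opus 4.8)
The plan is to run a finite--dimensional (Lyapunov--Schmidt type) reduction: Theorem \ref{1} removes the infinite--dimensional ($\Pi$) part of the Dirichlet condition over \emph{any} nearby perturbed surface, and the remaining $\dim V$ scalar equations are then closed by choosing $\theta=(a,Q,\lambda)$. \emph{Step 1 (a family of solutions over perturbed leaves).} For $\|\theta\|$ small let $\Phi_\theta(x)=a+Qx$, so $\theta(\mathcal C)=\Phi_\theta(S_\lambda)$; by \eqref{graphS_lambda} and $\|\theta\|\ll 1$ the set $\theta(\mathcal C)\cap\partial B_1$ is a small spherical graph over $\Sigma$ and $\theta(\mathcal C)\cap B_1$ is a small spherical graph over $\tilde S_\lambda$ (a minor issue, that $\Phi_\theta$ moves the unit ball, is absorbed by extending the graphs slightly). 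Transport the projection $\Pi$ of Section \ref{setup} and the boundary function $g$ to $\theta(\mathcal C)\cap\partial B_1$ by $\Phi_\theta$ together with the constant--graph identification of $\Sigma_\lambda$ with $\Sigma$, obtaining $\Pi_\theta$ and $g_\theta$ (so that $sp.graph_{\Sigma}(g)=sp.graph_{\theta(\mathcal C)\cap\partial B_1}(g_\theta)$). Applying Theorem \ref{1} in its spherical--graph form on $S_\lambda$ and pushing forward by $\Phi_\theta$ yields, for $\|g\|_{C^{2,\alpha}}$ small uniformly in $\lambda$ (hence in $\theta$), a minimal hypersurface $N_\theta=sp.graph_{\theta(\mathcal C)\cap B_1}(u_\theta)$ with $\mathcal M u_\theta=0$ and $\Pi_\theta u_\theta=\Pi_\theta g_\theta$.

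\emph{Step 2 (the residual and the fixed--point equation).} For $\|\theta\|,\|g\|$ small, $\overline{N_\theta}\cap\partial B_1$ is again a small spherical graph over $\Sigma$, say $\overline{N_\theta}\cap\partial B_1=sp.graph_{\Sigma}\!\big(b(\theta)\big)$; the conclusion of the theorem is exactly $b(\theta)=g$. Once $\Pi$ and $\Pi_\theta$ are identified as in the Note after Theorem \ref{1}, one has $\Pi b(\theta)=\Pi g$ by construction, so the only obstruction is the finite--dimensional quantity
$$\mathcal E(\theta):=(1-\Pi)\big(b(\theta)-g\big)\ \in\ (1-\Pi)C^{2,\alpha}(\Sigma)\cong\tilde V .$$
Write $(1-\Pi)b(\theta)=h^{-1}(\theta)+e(\theta)$, where $h^{-1}(\theta)$ is the spherical--boundary function of the unperturbed leaf $\theta(\mathcal C)\cap\partial B_1$ (via the identification $h$ introduced above) and $e(\theta):=(1-\Pi)\big(b(\theta)-h^{-1}(\theta)\big)$ collects the contribution of $u_\theta$. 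Then $\mathcal E(\theta)=0$ is equivalent to the fixed--point problem
$$\theta=h\big((1-\Pi)g-e(\theta)\big).$$

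\emph{Step 3 (closing the finite--dimensional problem).} From the contraction estimate of Theorem \ref{1}, $\|u_\theta-H_\lambda(\Pi_\theta g_\theta)\|_{C^{2,\alpha}_\delta}\le C\|g\|^2$, together with the Schauder bound $\|H_\lambda(\Pi_\theta g_\theta)\|_{C^{2,\alpha}_\delta}\le C\|g\|$ and boundedness of $R_\lambda$, one obtains $\|e(\theta)\|_{C^{2,\alpha}}\le C\|g\|$; continuity of $R_\lambda$ and uniqueness of the fixed point in Theorem \ref{1} make $\theta\mapsto e(\theta)$ continuous. Since $h$ is a local homeomorphism near $0$ with $h(0)=0$, the map $\theta\mapsto h\big((1-\Pi)g-e(\theta)\big)$ is a continuous self--map of the ball $\{\|\theta\|\le K\|g\|\}\subset V$ for $K$ large depending only on the Lipschitz constant of $h$ near $0$; Brouwer's fixed point theorem then produces $\theta^{*}$ with $\mathcal E(\theta^{*})=0$. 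Setting $N:=N_{\theta^{*}}$, $u:=u_{\theta^{*}}$ gives $\mathcal M u=0$ on $\theta^{*}(\mathcal C)\cap B_1$, $u=g_{\theta^{*}}$ on $\theta^{*}(\mathcal C)\cap\partial B_1$, and $\overline N\cap\partial B_1=sp.graph_{\Sigma}(g)$, with $N$ graphical over $\theta^{*}(\mathcal C)$; this graphicality is also part of the structure theory of Edelen--Spolaor \cite{Nick}, from which one may alternatively read off the existence of such a $\theta$.

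\emph{Main obstacle.} The delicate point is Step 3. The dilation variable is the trouble: by \eqref{graphS_lambda} the leaf $S_\lambda$ meets $\partial B_1$ in a constant graph of size $\sim|\lambda|^{\,1-\gamma_1^+}$ over $\Sigma$, and since $1-\gamma_1^+>1$ this is \emph{not} a diffeomorphic function of $\lambda$ at $\lambda=0$; hence $h$ is only a homeomorphism in that direction (after, if desired, reparametrizing $\lambda$ by $\mu=\pm|\lambda|^{1-\gamma_1^+}$, the sign distinguishing the two Hardt--Simon half--foliations and $\mu=0$ the cone itself), so only a topological (degree/Brouwer) argument is available --- which is precisely why $\theta^{*}$ is not claimed to be unique or to depend continuously on $g$ (Conjecture \ref{conjecture}). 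A secondary technical point is verifying that the transported data $\Pi_\theta$, $\mathcal M_{\theta(\mathcal C)}$, $R_\lambda$ and the solution map $\theta\mapsto u_\theta$ depend continuously on $\theta$ with bounds uniform in $\theta$ (equivalently in $\lambda$); this is where the uniform--in--$\lambda$ estimates of Section \ref{Linear} and the approximate surface $\tilde S_\lambda$ enter, and it requires pulling all objects back to the fixed surface $\mathcal C_1$ through the graphs of Step 1.
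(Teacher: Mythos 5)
Your proposal is essentially the paper's proof, repackaged. You and the paper both reduce to a finite--dimensional problem in $V$: your residual $\mathcal E(\theta)=(1-\Pi)(b(\theta)-g)$ is the same object as the paper's $\Theta(g,\theta)=h(u_\theta|_\Sigma-g_\theta)$ (they coincide up to the identification $h$, since $\Pi u_\theta=\Pi g_\theta$ forces $u_\theta|_\Sigma-g_\theta\in\Pi^{\perp}C^{2,\alpha}(\Sigma)$), and both rely on the same unproved-but-asserted continuity of $\theta\mapsto u_\theta$. The only genuine difference is the closing topological argument: the paper shows $\Theta_g$ is homotopic (via $\Theta_{tg}$) to $\Theta_0=\mathrm{Id}_V$ and hence a degree-one map of a small sphere, so $0\in\Theta_g(B)$; you instead rewrite $\mathcal E(\theta)=0$ as the fixed-point equation $\theta=h((1-\Pi)g-e(\theta))$ and apply Brouwer. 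These are equivalent, and neither yields uniqueness. One imprecision in your Step 3: you take the ball $\{\|\theta\|\le K\|g\|\}$ "for $K$ depending on the Lipschitz constant of $h$," but $h$ is not Lipschitz at $0$ in the dilation direction --- as you correctly note in your Main Obstacle, the boundary trace of $S_\lambda$ scales like $|\lambda|^{1-\gamma_1^+}$ with $1-\gamma_1^+>1$, so $h$ is only H\"older there and the self-map lands in a set whose $\lambda$-extent is $\sim\|g\|^{1/(1-\gamma_1^+)}$, not $\sim\|g\|$. Brouwer still applies after enlarging the ball accordingly (and the paper's degree formulation on a sphere sidesteps the point), but the stated radius is wrong as written.
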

\begin{proof}
We shall begin by defining the following function,
\begin{eqnarray*}
    \Theta: C^{2, \alpha}\times V  &\rightarrow& V,\\
    (g, \theta) &\rightarrow& h(u_{\theta}|_\Sigma-g_{\theta}),
\end{eqnarray*}
where $u_{\theta}$ is the solution to Theorem \ref{1} with $g_{\theta}$ instead of $g$. Note that $\Pi u_{\theta}=\Pi g_{\theta}$ and hence $u_{\theta}|_\Sigma-g_{\theta} \in  \Pi^{\perp} (C^{2, \alpha}(\Sigma))$. The function measures the difference in the boundary in Theorem \ref{1}.  \\
Note that we have 
$$\Theta(0, 0)=0,\:  \Theta(0, \theta)=\theta.$$
To see the second property note if $g=0$ then $g_{\theta}\approx -h^{-1}(\theta)\in \Pi^{\perp}C^{2, \alpha}(\Sigma)$, hence $\Pi g_\theta =0$ and $u_\theta=0$. \\
For ease in notation, define $\Theta_g:V\rightarrow V$ as $\Theta_g(\theta)\coloneqq \Theta(g,\theta)$. We have that, 
$$\Theta_0=Id_V.$$
Since $V=\mathbb R^{n+1}\times SO(n+1)\times \mathbb R$ is a finite-dimensional vector space, say $V=\mathbb R^m$, then  
$$\Theta_0:S^{m-1}_{\epsilon'}\rightarrow V\backslash\{0\},$$
is a degree 1 map, where $S^{m-1}_{\epsilon'}$ is an $\epsilon'$ neighborhood of  the m-1 dim sphere $S^{m-1}.$ Since $\Theta_g$ is continuous in $g$, for small $g$ we have $\Theta_0$ is homotopic to $\Theta_g$. That is, for $|g|<\epsilon$ small $\Theta_{tg}$ will be a homotopy from $\Theta_0$ to $\Theta_g$. Hence, 
$$\Theta_g: S^{m-1}_{\epsilon'}\rightarrow V\backslash\{0\}$$
will be a well-defined degree 1 map. Hence for $\|g\|_{C^{2,\alpha}}$ small we have, 
$$0\in \Theta_g (B^m(0)). $$
Hence, there exists a $\theta$ for each $g$ such that $\Theta(g, \theta)=0.$
\end{proof}
\subsection{ Singular minimal surfaces near the Quadratic cones}
This subsection aims to demonstrate a possible approach to proving uniqueness near quadratic cones. One would expect a stronger result with uniqueness like Conjecture \ref{conjecture}. \\
Since the above argument is based on the degree of the map, it does not give us a unique $\theta$ corresponding to $g$. In general, we would like to use the implicit function theorem and prove that there is a unique continuous choice $\theta_g$ such that $\Theta(g, \theta_g)$=0. To use the implicit function theorem, we will need $C^1$ regularity of the $\Theta$ in $\theta$ variable, which we currently do not. That would entail a better understanding of the inverse $R_\lambda$.\\
In this subsection, we demonstrate a similar uniqueness result involving only the simpler case, translations, and rotations. It sketches proofs in the simpler case to illustrate how a stronger result could be established. \\
We reduce to the case only involving translations and rotations and have the follow result. Most of the proofs is a blueprint to how such a stronger regularity result would be proven. 
\begin{theorem}\label{thm 4.6}
Let $\tilde{\Pi}:C^{2, \alpha}(\Sigma)\rightarrow C^{2, \alpha}(\Sigma)$ be defined as, 
$$ g\mapsto \sum_{i=2}^{\infty}\langle g, \phi_i \rangle \phi_i. $$
Given $g\in C^{2, \alpha}(\Sigma)$ there exists a unique continuous choice of $\theta=(a, Q)\in \mathbb R^{n+1}\times SO(n+1)$ with respect to $g$ such that, 
\begin{alignat*}{2}
       \mathcal{M}u&=0&\quad &\text{on }a+Q(\mathcal{C}_1),\\
                \tilde{\Pi}u&=\tilde{\Pi}g_{\theta} &      &\text{on }(a+Q(\mathcal{C}_1))\cap \partial B_1.
\end{alignat*}
Equivalently, there is a n-dimensional minimal current N, such that on the boundary $N$ and the $sp.graph_{\Sigma}(g)$ differ by a constant (corresponding to Hardt-Simon Foliations).
\end{theorem}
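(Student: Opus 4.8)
The plan is to re-run the degree argument of Theorem \ref{MainTheo} but upgrade it to an implicit function theorem statement, which becomes possible precisely because the parameter $\theta=(a,Q)$ now ranges only over rigid motions, so that every object in the construction depends smoothly on $\theta$. Assume $\|g\|_{C^{2,\alpha}}$ small. For $\|\theta\|$ small the set $\theta(\mathcal{C}_1)=a+Q(\mathcal{C}_1)$ is a congruent copy of $\mathcal{C}_1$, so Theorem \ref{CHSthm}, pulled back by the rigid motion, produces a unique $u_\theta\in C^{2,\alpha}_\delta(\theta(\mathcal{C}_1))$ with $\mathcal{M}u_\theta=0$ and $\Pi u_\theta=\Pi g_\theta$ on $\theta(\mathcal{C})\cap\partial B_1$, where $g_\theta$, $\Pi$ and $h$ are transported by that rigid motion and $g_\theta$ is $g$ re-expressed as a spherical graph over $\theta(\mathcal{C})\cap\partial B_1$. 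Writing $N_\theta\coloneqq sp.graph_{\theta(\mathcal{C}_1)}(u_\theta)$, its boundary spherical graph differs from $g$ only through the finite-dimensional discrepancy $u_\theta|_{\partial}-g_\theta\in\Pi^\perp(C^{2,\alpha}(\Sigma))$, so I would set $\Theta(g,\theta)\coloneqq h(u_\theta|_\partial-g_\theta)\in\mathbb{R}^{n+1}\times\mathfrak{so}(n+1)\times\mathbb{R}$ exactly as in the proof of Theorem \ref{MainTheo}. The condition $\tilde\Pi u=\tilde\Pi g_\theta$ in the statement is the vanishing of the modes-$\{2,3\}$ component of $\Theta(g,\theta)$ (modes $\geq 4$ already agree by construction, and the constant mode $1$ is left free); so let $\Theta'(g,\theta)$ be the $\mathbb{R}^{n+1}\times\mathfrak{so}(n+1)$-component of $\Theta$, where the $\mathfrak{so}(n+1)$-factor is understood modulo $\mathfrak{so}(p+1)\oplus\mathfrak{so}(q+1)$ (whose elements do not move $\mathcal{C}$), making $\Theta'$ a map between spaces of equal finite dimension.

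As in Theorem \ref{MainTheo} one has $\Theta'(0,0)=0$, and with $g=0$, $\theta=(a,Q)$ one gets $g_\theta=-h^{-1}(\theta)+O(\|\theta\|^2)$ lying in the modes-$\{2,3\}$ part, hence $\Pi g_\theta=0$, $u_\theta=0$, and $\Theta'(0,\theta)=\theta+O(\|\theta\|^2)$; therefore $D_\theta\Theta'(0,0)=\mathrm{Id}$. The substantive step is to show $\theta\mapsto u_\theta$ is $C^1$ from a neighbourhood of $0$ into $C^{2,\alpha}_\delta$: here $\theta\mapsto\theta(\mathcal{C}_1)$ is a real-analytic family of surfaces and $\theta\mapsto g_\theta$ is smooth, and, pulling back by the rigid motion, $u_\theta$ solves a fixed-point equation $u=Hg_\theta-\mathcal{L}_{\mathcal{C}}^{-1}Q_{\mathcal{C}}(u)$ on the fixed surface $\mathcal{C}_1$ with right-hand side depending smoothly on $\theta$ and uniform contraction constant; differentiability in $\theta$ then follows from the Banach-space implicit function theorem applied to $\Psi(u,\theta)\coloneqq u-Hg_\theta+\mathcal{L}_{\mathcal{C}}^{-1}Q_{\mathcal{C}}(u)=0$, whose $u$-linearisation at the solution is $\mathrm{Id}+\mathcal{L}_{\mathcal{C}}^{-1}DQ_{\mathcal{C}}$, invertible for small data. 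Consequently $\Theta'$ is continuous in $(g,\theta)$, $C^1$ in $\theta$ with $D_\theta\Theta'$ continuous near $(0,0)$, and $D_\theta\Theta'(0,0)$ is invertible; the implicit function theorem then yields a neighbourhood of $0$ in $C^{2,\alpha}(\Sigma)$ and a unique continuous map $g\mapsto\theta_g$ with $\theta_0=0$ and $\Theta'(g,\theta_g)=0$.

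For this $\theta_g=(a,Q)$ the surface $N\coloneqq N_{\theta_g}$ is a multiplicity-one minimal current over $a+Q(\mathcal{C}_1)$ whose boundary spherical graph agrees with $g$ in all Fourier modes $\geq 2$, i.e.\ $\tilde\Pi u_{\theta_g}=\tilde\Pi g_{\theta_g}$, the only possible mismatch being the constant mode; equivalently $\overline{N}\cap\partial B_1$ and $sp.graph_\Sigma(g)$ differ by a constant, which is the Hardt-Simon direction, and uniqueness and continuity of $\theta_g$ are part of the conclusion. The main obstacle is the second paragraph: genuinely establishing $C^1$ dependence of the solution operator on $\theta$ together with the non-degeneracy $D_\theta\Theta'(0,0)=\mathrm{Id}$. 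This is exactly where the full problem with parameter $(a,Q,\lambda)$ breaks down, since the $\lambda$-direction involves the Hardt-Simon surface and the glued inverse $R_\lambda$ whose $\lambda$-dependence is not understood, whereas here the $\theta$-family consists of honest rigid motions, acting smoothly on surfaces, metrics, and the operators $\mathcal{L}_{\mathcal{C}},Q_{\mathcal{C}}$. A secondary point needing care is the mode bookkeeping: the modes-$\geq 4$ agreement is with respect to the Fourier basis on $\theta(\mathcal{C})\cap\partial B_1$, so one must check it persists, to the order used, when the boundary is re-expressed over the fixed $\Sigma$, which holds because the change of base link is $O(\|\theta\|)$ and lands in $\Pi^\perp$ to leading order.
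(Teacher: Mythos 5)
Your proposal follows the same overall strategy as the paper: set up the finite-dimensional discrepancy map $\Theta$, identify $D_\theta\Theta(0,0)=\mathrm{Id}$, and invoke the implicit function theorem. The organizational differences are real but secondary. The paper establishes the $\theta$-dependence of $\Theta$ through the explicit comparison Lemma \ref{gtheta} (the expansion $g_\theta(\tilde w)=g(w)-h_\theta(w)+E(g,\theta)$, proved case-by-case for $\theta=(a,0)$ and $(0,Q)$) and the quantitative estimate on $Q_{\mathcal{C}}(Hg_\theta)-Q_{\mathcal{C}}(Hg)$ in eq.~(\ref{eqation thm 4.6}), arriving at $\Theta(g,\theta)=\Theta(g,0)+\theta+O(\epsilon^2)$; you instead pull the whole problem back by the rigid motion to the fixed $\mathcal C_1$ and apply the Banach-space IFT to $\Psi(u,\theta)=u-Hg_\theta+\mathcal L_{\mathcal C}^{-1}Q_{\mathcal C}(u)$, which is a cleaner way of formalizing the same $C^1$-in-$\theta$ claim and also buys the nice dimension-count observation that the relevant rotation parameters live in $\mathfrak{so}(n+1)/(\mathfrak{so}(p+1)\oplus\mathfrak{so}(q+1))$, matching the mode-$3$ eigenspace, which the paper leaves implicit in the identification $h$. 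Be aware that your assertion that $\theta\mapsto g_\theta$ is \emph{smooth} into $C^{2,\alpha}(\Sigma)$ is not free: differentiating the reparametrization in $\theta$ costs a derivative of $g$, so for $g$ only in $C^{2,\alpha}$ one should expect $C^0$ (or $C^1$ only into a weaker space) unless one exploits that the eventual target $\Theta'$ is finite-dimensional, exactly the gap you flag in your second paragraph; the paper sidesteps this by its rough expansion estimates rather than full differentiability, and explicitly labels the whole argument a sketch. So both proofs share the same true gap, and you correctly identify why it is more tractable here (rigid motions vs.\ the $\lambda$-direction through $R_\lambda$) and that it is what blocks extending the argument to Conjecture \ref{conjecture}.
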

\begin{proof}
Given $\theta=(a, Q)$, let $h_\theta:\Sigma\rightarrow \mathbb R$ be the boundary of $\theta(\mathcal C_1)$ i.e, 
$$sp.graph_\Sigma(h_\theta)=\theta(\mathcal C)\cap \partial B_1 .$$
Define,
$$\tilde{w}\coloneqq \exp_w (h_\theta(w) \nu_{\mathcal{C}}(w))= \cos (h_\theta(w))w+\sin (h_\theta(w))\nu_{\mathcal{C}}(w) \in \theta(\mathcal{C})\cap \partial B_1. $$
Let $\nu_{\theta(\mathcal{C})}(\tilde{w})$ denote the normal at $\tilde{w}$ on $\theta(\mathcal{C})$ where the orientation is preserved through the operation $\theta$.  

\begin{lemma}\label{gtheta}
    Let $g_{\theta}$ be defined as above, and if  $|\theta|< \epsilon, |g|< \eta$ small. Then,    $$g_{\theta}(\tilde{w})=g(w)-h_\theta(w)+E(g, \theta)$$
    where $|E(g, \theta)|\leq C\epsilon(\eta+\epsilon)$.
\end{lemma}
\begin{proof}
We shall first prove that,
$\|\gamma^{\prime}(h_\theta(w))-\nu_{\theta(\mathcal{C})}(\tilde{w})\|\leq C\epsilon$, where $C$ is a constant. Once we know this the lemma will follow. For ease of notation, if $w=(w_1, w_2)\in \mathbb R^{p+1}\times \mathbb R^{q+1}$ then define, 
$$Tw=(-w_2, w_1) .$$
We shall solve the lemma only for two easier cases where $\theta=(a, 0), (0, Q)$. Note, $\gamma^{\prime}(h_\theta(w))=-\sin h_\theta(w) w+\cos h_\theta(w) Tw$.
 If $\theta=(0, Q)$ and let $Q=\exp(A)$ for $A\in \mathfrak{so}(n+1) $then,   $$\nu_{\theta(\mathcal{C})}(\tilde{w})=QT(Q^\top\tilde{w}).$$
 If $\|\theta\|<\epsilon$,  then $\|Q-Id\|<\epsilon$. Using this we can compute that,  $|QT(Q^\top \tilde{w}-T\tilde{w})|<C\epsilon$. Using Theorem \ref{eigenvector}, we can show that $$\tan h_\theta(w)=Aw.Tw .$$ Now, $T\tilde{w}-\gamma^{\prime}(h_\theta(w))=2\sin h_\theta(w) w$  and $|\tan h_\theta(w)|<C\epsilon$. Hence, $\|T\tilde{w}-\gamma^{\prime}(h_\theta(w))\|<C\epsilon$.\\ In combination we obtain $\|\gamma^{\prime}(h_\theta(w))-\nu_{\theta(\mathcal{C})}(\tilde{w})\|\leq C\epsilon$.
The case when $\theta=(a, 0)$ has a very similar computation, and hence  $\|\gamma^{\prime}(h_\theta(w))-\nu_{\theta(\mathcal{C})}(\tilde{w})\|\leq C\epsilon.$\\
Using the above estimate, and since $sp.graph_{\Sigma}(g)=sp.graph_{\theta(\mathcal{C})\cap \partial B_1}(g_{\theta})$ the lemma follows. \\
For a general $\theta =(a, Q)$ one would have to compute $\nu_{\theta(\mathcal C)}$ similarly and estimate   $\|\gamma^{\prime}(h_\theta(w))-\nu_{\theta(\mathcal{C})}(\tilde{w})\|$. The process is similar to the above cases and works out similarly.  
\end{proof}
Similar to the general case define,
$$ \Theta(g, \theta)\coloneqq u_{\theta}|_\Sigma- g_{\theta},$$
where $u_\theta$ and $g_\theta$ are defined similarly to the general case. Note $g-g_\theta\approx \theta$, hence $\tilde \Pi g_\theta \approx \tilde\Pi g$ and using the continuity of the solution, we get $u_\theta \approx u $ and,
$$\Theta(g, \theta)\approx \Theta(g, 0)+\theta.$$
To see this consider, $g$ and $g_\theta$  as above, then 
$$\|Hg_{\theta}\|_{C^{2, \alpha}_{\delta}} \leq C \|g_\theta\|_{C^{2, \alpha}}^2<C\eta^2(\eta+\epsilon)^2.$$
Using this, we have
\begin{align}
\label{eqation thm 4.6}
\|\mathcal{M_C}(Hg_\theta)-\mathcal{M_C}(Hg)\|_{C^{0, \alpha}_{\delta-2}}&=\|Q_C(Hg_\theta)-Q_C(Hg)\|_{C^{0, \alpha}_{\delta-2}}\nonumber, \\
&\leq (\|g_\theta \|+\| g\|)|\theta|^2(\|g\|+\|\theta\|)^2 .
\end{align}
Hence,
\begin{equation}
    \Theta(g, \theta)=\Theta(g, 0) + \theta + O(\epsilon^2).
\end{equation}
In general one can prove a similar expansion centered at any small $\theta$. From this we can use the implicit function theorem to get uniqueness. 
\end{proof}
Note: The aim of Theorem \ref{thm 4.6} to show a rough sketch of one approach to showing such uniqueness, and hence lot of the arguments are very rough. \\
The key difficulty in extending such a result to all perturbations is extending eq.(\ref{eqation thm 4.6}). While extending the inequality to the singular direction $\lambda$, we were not able to get such a stronger regularity. 

\subsection{Minimal surfaces near the Simons cone}
We state the following conjecture, which adds a uniqueness to Theorem \ref{MainTheo}, (see also {\cite[P4.1]{wang2020deformationssingularminimalhypersurfaces}})
\begin{conjecture}\label{conjecture}
    Let $\mathcal C_1$ be a quadratic cone in $B_1$ and $\Sigma$ be its link  as defined above,let $g\in C^{2, \alpha}(\Sigma)$ where $|g|$ is small. Then, there is a continuous choice of $\theta_g=(a, Q, \lambda)\in \mathbb R^{n+1}\times SO(n+1)\times \mathbb R$ such that, there exists a minimal hypersurface $N$ spherically graphical over $\theta (\mathcal C_1)\coloneqq (a+Q.S_{\lambda})\cap B_1$and  satisfying $N\cap \partial B_1=sp.graph_{\Sigma}g$.
\end{conjecture}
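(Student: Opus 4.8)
The plan is to upgrade the degree-theoretic existence of Theorem \ref{MainTheo} to the uniqueness-and-continuity statement of the conjecture by an implicit function theorem argument, following the blueprint already given for the translation--rotation case in Theorem \ref{thm 4.6}. Recall the map $\Theta\colon C^{2,\alpha}(\Sigma)\times V\to V$, $\Theta(g,\theta)=h\big(u_\theta|_\Sigma-g_\theta\big)$, where $u_\theta$ solves Theorem \ref{1} over $\theta(\mathcal C_1)$ with boundary datum $g_\theta$. The paper already records the two crucial facts $\Theta(0,0)=0$ and $\Theta(0,\theta)=\theta$; the latter says $\theta\mapsto\Theta(0,\theta)$ is literally the identity on a neighborhood of $0\in V$, so if $\Theta$ were known to be $C^1$ in a neighborhood of $(0,0)$ in $C^{2,\alpha}(\Sigma)\times V$, then $D_\theta\Theta(0,0)=\mathrm{Id}_V$ is invertible and the implicit function theorem would immediately yield, for each $g$ with $\|g\|_{C^{2,\alpha}}$ small, a \emph{unique} and continuous (indeed $C^1$) choice $\theta_g$ with $\Theta(g,\theta_g)=0$. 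The surface $N=sp.graph_{\theta_g(\mathcal C_1)}(u_{\theta_g})$ is then minimal, spherically graphical over $\theta_g(\mathcal C_1)=(a+Q\cdot S_\lambda)\cap B_1$, and satisfies $N\cap\partial B_1=sp.graph_\Sigma(g)$. Thus the entire conjecture reduces to establishing $C^1$ dependence of $\Theta$ on $\theta$.

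Concretely I would proceed in three steps. First, re-examine the right inverse $R_\lambda$ from Theorem \ref{Lineareqn} and show that, as a family of bounded operators $C^{0,\alpha}_{\delta-2}(\tilde S_\lambda)\times\Pi(C^{2,\alpha}(\Sigma))\to C^{2,\alpha}_\delta(\tilde S_\lambda)$, it depends $C^1$-ly on $\lambda$ with a derivative bound that can be absorbed by the quadratic gain from $\tilde Q_\lambda$. This means differentiating through the ingredients of $P_\lambda=\beta_1\lambda^2 u_1\circ\Lambda^{-1}+\beta_2 u_{2,g}$: the scaling map $\Lambda\colon S_+\to S_\lambda$, the cutoffs $\beta_i$ (whose $\lambda$-derivatives already carry a factor $1/\log\lambda$ by eq.(\ref{beta1})), and the $\lambda$-independent inverses $\mathcal F$ on $S_+$ (Proposition \ref{InverserOnS_+}) and $\mathcal L_{\mathcal C}^{-1}$ on $\mathcal C_1$ (Theorem \ref{LinearC}). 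Second, feed this into the contraction scheme of Section \ref{Non Linear}: differentiating the fixed-point equation $\mathcal N(u)=-R_\lambda(\tilde Q_\lambda(u),\Pi g)$ and using the uniform contraction constant $\tfrac12$ gives $C^1$ dependence of the fixed point $u_\theta$, hence of $u_\theta|_\Sigma$, jointly in $\theta=(a,Q,\lambda)$ and $g$. Third, combine this with Lemma \ref{gtheta}, extended to a general $\theta$ (including the $\lambda$-component of $h_\theta$), to obtain the full expansion $\Theta(g,\theta)=\Theta(g,0)+\theta+O(\epsilon^2)$ promised at the end of Theorem \ref{thm 4.6}, and then apply the implicit function theorem as above.

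The hard part, as the authors themselves flag, is the first step in the \emph{singular} direction $\lambda$. As $\lambda\to 0$ the approximate surface $\tilde S_\lambda$ degenerates — the gluing annulus $\{r_\lambda<r<2r_\lambda\}$ collapses to the origin and the transition scales $\lambda^{a_\pm}$ all tend to $0$ — so $\lambda\mapsto R_\lambda$ cannot be uniformly $C^1$ with respect to a fixed norm: differentiating $u_1\circ\Lambda^{-1}$ or the cutoffs naively produces factors like $\lambda^{-\gamma_1^+}$ (note $\gamma_1^+<0$ is exactly the Hardt--Simon rate). The natural remedy is to differentiate in $\log\lambda$ rather than $\lambda$ and to measure everything in the $\lambda$-dependent weighted norms on $\tilde S_\lambda$, so that the offending powers are compensated by the $1/\log\lambda$ gains in eq.(\ref{beta1}) together with the $o(1)$ slack already built into $\|\mathbb L_\lambda\circ P_\lambda-\mathrm{Id}\|\le\tfrac12$; making this estimate quantitative and uniform down to $\lambda=0$ is the crux of the whole argument. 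A cleaner but more demanding alternative would bypass $R_\lambda$ and instead prove uniform-in-$\lambda$ Schauder estimates directly for the linearized operator on $S_\lambda\cap B_1$, with weights adapted to its two scales (the smooth core of size $\sim r_\lambda$ and the asymptotically conical outer region), while verifying that the $\gamma_1^+$-Jacobi field is the only obstruction and the induced boundary map is a submersion onto $V$; this would supply the implicit function theorem input without the gluing bookkeeping, at the cost of a more delicate degenerate-elliptic analysis.
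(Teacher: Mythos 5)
This statement is an open conjecture in the paper, not a theorem, so there is no paper proof to compare against: the authors explicitly write that they ``were not able to get such a stronger regularity'' in the singular direction $\lambda$, and that an implicit-function-theorem argument ``would entail a better understanding of the inverse $R_\lambda$.'' Your proposal is therefore not a proof either; it is a roadmap that reproduces, in more detail, exactly the strategy the paper itself sketches around Theorem~\ref{thm 4.6} and Conjecture~\ref{conjecture}: set up $\Theta(g,\theta)$, note $D_\theta\Theta(0,0)=\mathrm{Id}_V$, and invoke the implicit function theorem once $C^1$ dependence in $\theta$ is established.

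The genuine gap is the one you flag yourself and is the same one the authors flag: uniform $C^1$ dependence of the right inverse $R_\lambda = P_\lambda\circ(\mathbb L_\lambda\circ P_\lambda)^{-1}$ on $\lambda$ down to $\lambda=0$. Your suggested remedies --- differentiating in $\log\lambda$, exploiting the $1/\log\lambda$ gain in eq.~(\ref{beta1}), or bypassing the gluing with a direct two-scale Schauder theory on $S_\lambda\cap B_1$ --- are plausible directions, but none of them is carried out, and each has a concrete obstruction you would need to overcome: the comparison constant between the $\lambda$-dependent weighted norms blows up like $\lambda^{\delta-\delta'}$ (third item of the proposition on weighted spaces over $\tilde S_\lambda$), so ``measuring in the $\lambda$-dependent norms'' does not automatically make the family equicontinuous; and the direct approach must show that the boundary map on $S_\lambda\cap B_1$ is a submersion onto $V$ uniformly as the core region degenerates, which is precisely what a uniform-in-$\lambda$ version of eq.~(\ref{eqation thm 4.6}) would give and is the step the paper could not close. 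Until one of those estimates is actually proved, the conjecture remains a conjecture, and your write-up should be presented as a program, not a proof.
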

One of the essential ideas towards solving the conjecture would be to have a better understanding of the inverse of the linearized operator $R_\lambda$ (eq.(\ref{R_lambda})). A stronger uniqueness result like this should help us understand the Plateau's problem better near such quadratic cones, and a better understanding of $\theta_g$ would help us know precisely when singularities are formed near these quadratic cones.

\printbibliography

\end{document}